\long\def\symbolfootnote[#1]#2{\begingroup%
\def\thefootnote{\fnsymbol{footnote}}\footnote[#1]{#2}\endgroup}
\qed\vspace{5pt}}
\newtheoremstyle{lause}
{5pt}
{5pt}
{\slshape}
{\parindent}
{\bfseries}
{.}
{.5em}
{}
\theoremstyle{lause}
\newtheoremstyle{maaritelma}
{5pt}
{5pt}
{\rmfamily}
{\parindent}
{\bfseries}
{.}
{.5em}
{}
\theoremstyle{maaritelma}
\newtheoremstyle{lause}
{5pt}
{5pt}
{\slshape}
{\parindent}
{\bfseries}
{.}
{.5em}
{}
\theoremstyle{lause}
\newtheorem{theorem}{Theorem}[section]
\newtheorem{lemma}[theorem]{Lemma}
\newtheorem{proposition}[theorem]{Proposition}
\newtheorem{corollary}[theorem]{Corollary}
\newtheorem{problem}[theorem]{Problem}
\newtheoremstyle{maaritelma}
{5pt}
{5pt}
{\rmfamily}
{\parindent}
{\bfseries}
{.}
{.5em}
{}
\theoremstyle{maaritelma}
\newtheorem{definition}[theorem]{Definition}
\newtheorem{example}[theorem]{Example}
\newtheorem{remark}[theorem]{Remark}
\numberwithin{equation}{section}
\begin{document}

\thispagestyle{empty}

\begin{center}

{\large{\textbf{Balayage of measures on a~locally compact space}}}

\vspace{18pt}

\textbf{Natalia Zorii}

\vspace{18pt}

\emph{Dedicated to Professor Bent Fuglede on the occasion of his 95th birthday}\vspace{8pt}

\footnotesize{\address{Institute of Mathematics, Academy of Sciences
of Ukraine, Tereshchenkivska~3, 01601,
Kyiv-4, Ukraine\\
natalia.zorii@gmail.com }}

\end{center}

\vspace{12pt}

{\footnotesize{\textbf{Abstract.} We develop a theory of inner balayage of a positive Radon measure $\mu$ of finite energy on a locally compact space $X$ to arbitrary $A\subset X$, generalizing Cartan's theory of Newtonian inner balayage on $\mathbb R^n$, $n\geqslant3$, to a suitable function kernel on $X$. As an application of the theory thereby established, we show that if the space $X$ is perfectly normal and of class $K_\sigma$, then a recent result by B.~Fuglede (Anal.\ Math., 2016) on outer balayage of $\mu$ to quasiclosed $A$ remains valid
for arbitrary Borel $A$. We give in particular various alternative definitions of inner (outer) balayage, provide a formula for evaluation of its total mass, and prove convergence theorems for inner (outer) swept measures and their potentials. The results obtained do hold (and are new in part)
for most classical kernels on $\mathbb R^n$, $n\geqslant2$, which is important in applications.}}
\symbolfootnote[0]{\quad 2010 Mathematics Subject Classification: Primary 31C15.}
\symbolfootnote[0]{\quad Key words: Radon measures on a locally compact space, inner and outer balayage, consistent kernel, energy principle, first and second maximum principles.}

\vspace{6pt}

\markboth{\emph{Natalia Zorii}} {\emph{Balayage of measures on a locally compact space}}

\section{Introduction}\label{sec1}

The paper deals with balayage of a positive Radon measure $\mu$ of finite energy on a locally compact (Hausdorff) space $X$ to a set $A\subset X$ in the setting of potentials with respect to a symmetric, lower semicontinuous (l.s.c.)\ kernel $\kappa:X\times X\to[0,\infty]$. Throughout the present section as well as Sects.~\ref{sec-inner}--\ref{sec-outer},
the kernel $\kappa$ is assumed to satisfy the energy, consistency, and domination principles \cite{F1,O}.

It has recently been shown by Bent Fuglede \cite[Theorem~4.12]{Fu5} that if a set $A$ is {\it quasiclosed\/} (that is, if it can be approximated in outer capacity by closed sets), then there is a unique positive Radon measure $\mu^{*A}$ of finite energy that is concentrated on $A$ (that is, $\mu^{*A}\in\mathcal E^+_A$) and has the property
\[\kappa\mu^{*A}=\kappa\mu\text{ \ q.e.\ on $A$},\] where $\kappa\nu(\cdot):=\int\kappa(\cdot,y)\,d\nu(y)$ is the {\it potential\/} of a Radon measure $\nu$ on $X$, and {\it q.e.}\ ({\it qua\-si-ev\-ery\-whe\-re\/}) means that the equality holds everywhere on $A$ except for a subset of outer capacity zero. Such a $\mu^{*A}$ is said to be the {\it outer balayage\/} of $\mu$ onto $A$.

We shall show below that if the space $X$ is perfectly normal and of class $K_\sigma$, then the quoted Fuglede's result remains valid for arbitrary {\it Borel\/} $A$ (Sect.~\ref{sec-outer}).
The outer balayage $\mu^{*A}$ now, however, is no longer concentrated on the set $A$ itself (as it was for $A$ quasiclosed), but on the closure of $A$ in $X$.
It is still characterized uniquely by the above display, but now within $\mathcal E'_A$, the closure of $\mathcal E^+_A$ in the topology determined by the {\it energy norm\/} $\|\nu\|:=\sqrt{\int\kappa\nu\,d\nu}$. The outer balayage $\mu^{*A}$ is actually the (unique) limit of the net $(\mu^{*K})$ in both the vague and the energy norm topologies when $K$ increases along the upper directed family of all compact subsets of $A$. Furthermore, it can alternatively be determined as a solution (which exists and is unique)
to either of the following two extremal problems.

\begin{problem}\label{pr-outer} In the class\/ $\Lambda^*_{A,\mu}$ of positive Radon measures\/ $\nu$ on\/ $X$ of finite energy and such that\/ $\kappa\nu\geqslant\kappa\mu$ q.e.\ on\/ $A$, find\/ $\lambda_{A,\mu}^*\in\Lambda^*_{A,\mu}$ of minimal potential:
\[\kappa\lambda_{A,\mu}^*=\min_{\nu\in\Lambda_{A,\mu}^*}\,\kappa\nu\text{ \ everywhere on $X$}.\]
\end{problem}

\begin{problem}\label{pr-proj}Find\/ $\mu_{\mathcal E'_A}\in\mathcal E'_A$ such that
\[\|\mu-\mu_{\mathcal E'_A}\|=\min_{\nu\in\mathcal E'_A}\,\|\mu-\nu\|=\inf_{\nu\in\mathcal E^+_A}\,\|\mu-\nu\|.\]
\end{problem}

If moreover $\mu$ is {\it bounded\/} (i.e.\ $\mu(X)<\infty$) while $\kappa$ satisfies Frostman's maximum principle,
then the outer balayage $\mu^{*A}$ can also be found as the (unique) solution to the problem obtained from Problem~\ref{pr-outer}, resp.\ Problem~\ref{pr-proj}, by requiring additionally that all the $\nu$ involved have the total mass
$\nu(X)\leqslant q$, $\mu(X)\leqslant q<\infty$ being given.

These results are obtained by a direct application of the theory of {\it inner balayage}, developed in Sects.~\ref{sec-inner}--\ref{proofs2} below and generalizing H.~Cartan's theory \cite{Ca2} of inner Newtonian balayage on $\mathbb R^n$, $n\geqslant3$, to a locally compact (l.c.)\ space $X$ endowed with a kernel described above. (In this part of the study, both $X$ and $A$ are {\it arbitrary}.)

We give in particular various alternative definitions of inner (outer) balayage, provide a formula for evaluation of its total mass, and prove convergence theorems for inner (outer) swept measures and their potentials (see Sects.~\ref{sec-inner}, \ref{sec-further}, \ref{sec-outer}).

The results obtained do hold (and are new in part) for the Newtonian kernel $|x-y|^{2-n}$ and, more generally, the $\alpha$-Riesz kernel $|x-y|^{\alpha-n}$ of order $0<\alpha\leqslant2$ on $\mathbb R^n$, $n\geqslant3$, as well as for the associated $\alpha$-Green kernel $g_D^\alpha$ on an open set $D\subset\mathbb R^n$.\footnote{For the theory of outer, resp.\ inner, Riesz balayage, we refer the reader to \cite{BH}, resp.\ \cite{Z-bal,Z-bal2}, the investigation in \cite{BH} being carried out in the general framework of balayage spaces.} The same is true for the ($2$-)Green kernel on a planar Greenian set.\footnote{Regarding the logarithmic kernel on $\mathbb R^2$, see Example~\ref{rem:clas} below and footnote~\ref{f-obstacle} attached to it.} This suggests that the present work can be useful, for instance, in connection with  optimal point configurations and minimum energy problems (see e.g.\ \cite{Z4,Dr,Z9,ZPot3,Br,DFHSZ2,FZ-Pot2,DFHSZ1,Z-AMP} for some applications of balayage, the papers \cite{Z4}--\cite{ZPot3} dealing with minimum energy problems with respect to a general function kernel $\kappa$ on a l.c.\ space $X$).

To begin with, we review some basic facts of the theory of potentials on a l.c.\ space $X$, using the fundamental study by Fuglede \cite{F1} as a guide.

\section{Basic facts of potential theory on a locally compact space}\label{sec11}Denote by $\mathfrak M=\mathfrak M(X)$ the linear space of all (real-val\-ued scalar Radon) measures $\mu$ on  a l.c.\ space $X$, equipped with the {\it vague\/} ($=${\it weak$^*$}) topology of pointwise convergence on the class $C_0=C_0(X)$ of all continuous functions $f:X\to\mathbb R$ with compact support, and by $\mathfrak M^+=\mathfrak M^+(X)$ the cone of all positive $\mu\in\mathfrak M$.

\begin{lemma}[{\rm see e.g.\ \cite[Section~1.1]{F1}}]\label{lemma-semi}For any l.s.c.\ function\/ $\psi:X\to[0,\infty]$, the mapping\/ $\mu\mapsto\int\psi\,d\mu$ is
vaguely l.s.c.\ on\/ $\mathfrak M^+$\/ {\rm(the integral here being understood as upper integral, see e.g.\ \cite[Section~IV.1.1]{B2})}.\end{lemma}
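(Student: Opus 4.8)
The plan is to exhibit the map $\mu\mapsto\int\psi\,d\mu$ as a pointwise supremum of vaguely continuous functions, a supremum which is then automatically vaguely l.s.c. First I would introduce the family of continuous compactly supported minorants of $\psi$,
\[\Phi:=\{f\in C_0:\ 0\leqslant f\leqslant\psi\},\]
which is nonempty (it contains the zero function, as $\psi\geqslant0$) and upward directed: the pointwise maximum of two members of $C_0$ again lies in $C_0$ and is dominated by $\psi$ whenever both members are.

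The key step, and the only one carrying genuine content, is the representation
\[\int\psi\,d\mu=\sup_{f\in\Phi}\,\int f\,d\mu\qquad\text{for every }\mu\in\mathfrak M^+.\]
This is precisely the defining property of the upper integral of a nonnegative l.s.c.\ function with respect to a positive Radon measure in the sense of Bourbaki \cite[Section~IV.1.1]{B2}. Here both hypotheses on the data enter: the local compactness of $X$ guarantees, via a Urysohn-type argument, that every nonnegative l.s.c.\ $\psi$ is the pointwise supremum of the upward-directed family $\Phi$ (for $t<\psi(x)$ one finds, using lower semicontinuity and local compactness, an $f\in C_0$ with support where $\psi>t$ and $f(x)$ arbitrarily close to $t$), while the very definition of the upper integral of such a function is the supremum of the integrals of its continuous minorants with compact support.

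Finally I would combine this with two soft observations. For each fixed $f\in C_0$ the evaluation $\mu\mapsto\int f\,d\mu$ is, by the very definition of the vague topology as pointwise convergence on $C_0$, vaguely continuous on $\mathfrak M$ and a fortiori vaguely l.s.c.; and a pointwise supremum of an arbitrary family of l.s.c.\ functions is again l.s.c., since for each $c\in\mathbb R$ one has $\{\mu:\int\psi\,d\mu>c\}=\bigcup_{f\in\Phi}\{\mu:\int f\,d\mu>c\}$, a union of vaguely open sets and hence vaguely open. Applying this to the displayed representation yields the asserted vague lower semicontinuity on $\mathfrak M^+$. I do not expect a serious obstacle: once the upper integral is correctly interpreted in the Bourbaki sense so that the supremum formula is available, the statement reduces to the formal fact that suprema of l.s.c.\ functions are l.s.c.; the entire difficulty, such as it is, is the bookkeeping of establishing (or invoking) that supremum formula.
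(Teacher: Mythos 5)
Your proof is correct and is essentially the standard argument that the paper delegates to Fuglede \cite[Section~1.1]{F1}: since the upper integral of a positive l.s.c.\ $\psi$ is by definition (Bourbaki) the supremum of $\mu(f)$ over the upward-directed family of minorants $f\in C_0$ with $0\leqslant f\leqslant\psi$, the map $\mu\mapsto\int\psi\,d\mu$ is an upper envelope of vaguely continuous maps and hence vaguely l.s.c. No gaps; this matches the cited proof.
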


Given a (positive, symmetric, l.s.c.)\ kernel $\kappa$ on $X$ and given (signed) measures $\mu,\nu\in\mathfrak M$, define the {\it potential\/} and the {\it mutual energy\/}  by
\begin{align*}\kappa\mu(x)&:=\int\kappa(x,y)\,d\mu(y),\quad x\in X,\\
\kappa(\mu,\nu)&:=\int\kappa(x,y)\,d(\mu\otimes\nu)(x,y),
\end{align*}
respectively, provided the right-hand side is well defined as a finite number or $\pm\infty$ (for more details, see e.g.\ \cite[Section~2.1]{F1}). For $\mu=\nu$,
$\kappa(\mu,\nu)$ defines the {\it energy\/} $\kappa(\mu,\mu)$ of $\mu$.
In particular, if the measures are positive, then $\kappa\mu(x)$, resp.\ $\kappa(\mu,\nu)$, is well defined and represents a positive l.s.c.\ function of $(x,\mu)\in X\times\mathfrak M^+$, resp.\ of $(\mu,\nu)\in\mathfrak M^+\times\mathfrak M^+$ (the {\it principle of descent\/} \cite[Lemma~2.2.1]{F1}, cf.\ Lemma~\ref{lemma-semi}).

{\it Throughout the remainder of the present paper, we shall tacitly assume a kernel in question to satisfy the energy and consistency principles.} Recall that a kernel $\kappa$ is said to satisfy the {\it energy principle\/} (or to be {\it strictly positive definite\/}) if for any (signed) $\mu\in\mathfrak M$, $\kappa(\mu,\mu)$ is ${}\geqslant0$ whenever defined, and it is zero only for $\mu=0$.
All the $\mu\in\mathfrak M$ with finite energy\footnote{If the energy principle holds, then a (signed) measure $\mu\in\mathfrak M$ has finite energy if and only if so do both $\mu^+,\mu^-\in\mathfrak M^+$, the positive and negative parts of $\mu$ in the Hahn--Jor\-dan decomposition.} then form a pre-Hil\-bert space $\mathcal E=\mathcal E(X)$ with the inner product $(\mu,\nu):=\kappa(\mu,\nu)$ and the norm $\|\mu\|:=\sqrt{\kappa(\mu,\mu)}$, see \cite[Section~3.1]{F1}. The (Hausdorff) topology on $\mathcal E$ defined by the norm $\|\cdot\|$ is said to be {\it strong}.

We shall use the following lemma from the geometry of the pre-Hilbert space $\mathcal E$.

\begin{lemma}[{\rm see \cite[Lemma~4.1.1]{F1}}]\label{4.1.1}Suppose that\/ $\Gamma$ is a convex subset of\/ $\mathcal E$, and there exists\/ $\lambda\in\Gamma$ of minimal norm:
\[\|\lambda\|=\min_{\nu\in\Gamma}\,\|\nu\|.\]
For any\/ $\nu\in\Gamma$, then
\[\|\nu-\lambda\|^2\leqslant\|\nu\|^2-\|\lambda\|^2.\]
\end{lemma}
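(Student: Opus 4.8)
The plan is to exploit convexity of $\Gamma$ together with the minimality of $\lambda$ through a standard first-order variational argument in the pre-Hilbert space $\mathcal E$. First I would note that, since $\mathcal E$ is a linear space, the difference $\nu-\lambda$ lies in $\mathcal E$, and since $\Gamma$ is convex, the entire segment $\lambda+t(\nu-\lambda)=(1-t)\lambda+t\nu$ belongs to $\Gamma$ for every $t\in[0,1]$. The minimality of $\|\lambda\|$ then forces $\|\lambda+t(\nu-\lambda)\|\geqslant\|\lambda\|$, hence $\|\lambda+t(\nu-\lambda)\|^2\geqslant\|\lambda\|^2$, along this whole segment.

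The key step is to extract a variational inequality from this. Expanding the square using bilinearity of the inner product $(\cdot,\cdot)=\kappa(\cdot,\cdot)$ gives, for all $t\in(0,1]$,
\[
2t\,(\lambda,\nu-\lambda)+t^2\|\nu-\lambda\|^2\geqslant0.
\]
Dividing by $t>0$ and letting $t\to0^+$ yields $(\lambda,\nu-\lambda)\geqslant0$, that is, $(\lambda,\nu)\geqslant\|\lambda\|^2$. This is the geometric heart of the statement: $\lambda$ makes a nonnegative inner product with every direction $\nu-\lambda$ pointing into $\Gamma$.

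Finally I would conclude by a direct expansion. Writing
\[
\|\nu-\lambda\|^2=\|\nu\|^2-2\,(\nu,\lambda)+\|\lambda\|^2
\]
and substituting the inequality $(\nu,\lambda)\geqslant\|\lambda\|^2$ from the previous step gives
\[
\|\nu-\lambda\|^2\leqslant\|\nu\|^2-2\|\lambda\|^2+\|\lambda\|^2=\|\nu\|^2-\|\lambda\|^2,
\]
which is the desired estimate. I do not anticipate a genuine obstacle here: the only points requiring care are purely formal, namely that all measures in play have finite energy (so that the inner-product computations take place legitimately within $\mathcal E$), that the convex combination remains in $\Gamma$ so the minimality comparison applies, and that the one-sided limit as $t\to0^+$ is justified simply because the displayed quadratic in $t$ is an honest finite-valued polynomial. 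The argument uses no special feature of the kernel beyond the energy principle, which guarantees that $\|\cdot\|$ is a genuine norm and $\mathcal E$ a pre-Hilbert space.
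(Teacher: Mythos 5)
Your argument is correct and is essentially the same first-order variational proof as in the cited source (the paper itself only refers to Fuglede's Lemma~4.1.1 rather than reproducing it): convexity gives the segment $(1-t)\lambda+t\nu\subset\Gamma$, minimality yields $(\lambda,\nu-\lambda)\geqslant0$ after dividing by $t$ and letting $t\to0^+$, and expanding $\|\nu-\lambda\|^2$ finishes the estimate. No gaps; the finiteness of all inner products is indeed guaranteed by $\Gamma\subset\mathcal E$.
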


A (strictly positive definite) kernel $\kappa$ is called {\it consistent\/} \cite[Section~3.3]{F1} if every strong Cauchy net in the cone $\mathcal E^+=\mathcal E^+(X):=\mathcal E\cap\mathfrak M^+$ converges strongly to any of its vague limit points.\footnote{As the vague topology on $\mathfrak M^+$ does not satisfy the first axiom of
countability, the vague convergence cannot in general be described in terms of sequences. We follow Moore and Smith's theory of convergence
\cite{MS}, based on the concept of {\it nets}. However, if the space $X$ has a countable base of open sets, the use of nets in $\mathfrak M^+$ may be avoided. Indeed, such an $X$ is metrizable (with a metric $\varrho_X$) and of class $K_\sigma$ \cite[Section~IX.2, Corollary to Proposition~16]{B3}, and hence it has a countable dense subset $(x_k)\subset X$ \cite[Section~IX.2, Proposition~12]{B3}. Therefore,
\[\mathcal U_k=\Bigl\{\nu\in\mathfrak M^+: \ \int\bigl(1-k\varrho_X(x_k,x)\bigr)^+\,d|\nu-\nu_0|(x)<
1/k\Bigr\},\quad k\in\mathbb N,\]
form a countable base of vague neighborhoods of $\nu_0\in\mathfrak M^+$. Here $|\mu|:=\mu^++\mu^-$, $\mu\in\mathfrak M$.\label{foot}} The cone $\mathcal E^+$ then becomes {\it strongly complete}, a strongly bounded part of $\mathcal E^+$ being vaguely compact by \cite[Lemma~2.5.1]{F1}. As the strong limit of a strong Cauchy net $(\mu_k)\subset\mathcal E^+$ is unique, all the vague limit points of such a $(\mu_k)$ must be equal. The vague topology on $\mathfrak M$ being Hausdorff, we thus conclude by applying \cite[Section~I.9.1, Corollary]{B1} that any strong Cauchy net in $\mathcal E^+$ converges to some (unique) limit both strongly and vaguely. In Fuglede's terminology \cite[Section~3.3]{F1}, a strictly positive definite, consistent kernel is, therefore, {\it perfect}.

Given $A\subset X$, denote by $\mathfrak M^+_A$ the class of all $\mu\in\mathfrak M^+$ {\it concentrated on\/}
$A$, which means that $A^c:=X\setminus A$ is locally $\mu$-neg\-lig\-ible, or equivalently, that $A$ is $\mu$-meas\-ur\-able and $\mu=\mu|_A$, $\mu|_A:=1_A\cdot\mu$ being the {\it trace\/} of $\mu$ to $A$ \cite[Section~IV.14.7]{E2}. (Note that for $\mu\in\mathfrak M^+_A$, the indicator function $1_A$ of $A$ is locally $\mu$-int\-egr\-able.) The total mass of $\mu\in\mathfrak M^+_A$ is $\mu(X)=\mu_*(A)$, $\mu_*(A)$ and $\mu^*(A)$ denoting the {\it inner\/} and {\it outer\/} measure of $A$, respectively. If moreover $A$ is closed, or if $A^c$ is contained in a countable union of sets $Q_k$ with $\mu^*(Q_k)<\infty$,\footnote{If the latter holds, $A^c$ is said to be $\mu$-$\sigma$-{\it finite\/} \cite[Section~IV.7.3]{E2}. This occurs e.g.\ if the measure $\mu$ is bounded, or if the space $X$ is of class $K_\sigma$.} then for any $\mu\in\mathfrak M^+_A$, $A^c$ is $\mu$-neg\-lig\-ible, i.e.\ $\mu^*(A^c)=0$. In particular, if $A$ is closed, then $\mathfrak M^+_A$ consists of all $\mu\in\mathfrak M^+$ with $S(\mu)\subset A$, $S(\cdot)$ being the {\it support\/} of a measure.

Denoting now $\mathcal E^+_A:=\mathfrak M^+_A\cap\mathcal E$, we define the {\it inner capacity\/} of $A$ by
\[c_*(A):=\Bigl[\inf_{\mu\in\mathcal E^+_A: \ \mu(X)=1}\,\kappa(\mu,\mu)\Bigr]^{-1}.\]
(The infimum over the empty set is interpreted as $+\infty$.)
Then \cite[p.~153, Eq.~(2)]{F1}
\begin{equation}\label{153}c_*(A)=\sup\,c_*(K)\text{ \ ($K\subset A$ compact)}.\end{equation}
Also, by homogeneity reasons (cf.\ \cite[Lemma~2.3.1]{F1}),
\begin{equation}c_*(A)=0\iff\mathcal E^+_A=\{0\}\iff\mathcal E^+_K=\{0\}\text{ \ for every compact $K\subset A$}.\label{2.3.1}\end{equation}

We are thus led to the following conclusion, to be often used in what follows.

\begin{lemma}\label{l-negl}Given\/ $\mu\in\mathcal E^+$ and a\/ $\mu$-measurable set\/ $A\subset X$, $A$ is locally\/ $\mu$-neg\-lig\-ible if\/ $c_*(A)=0$, and it is\/ $\mu$-neg\-lig\-ible if moreover it is\/ $\mu$-$\sigma$-fi\-ni\-te.\end{lemma}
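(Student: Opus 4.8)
The plan is to reduce the whole statement to the behaviour of $\mu$ on compact subsets of $A$, where the hypothesis $c_*(A)=0$ can be exploited through the characterization \eqref{2.3.1}. The first thing I would establish is that $\mu(K)=0$ for every compact $K\subset A$. Given such a $K$, I would look at the trace $\mu|_K=1_K\cdot\mu$, a positive Radon measure concentrated on the compact set $K$, hence a member of $\mathfrak M^+_K$. Writing $\mu=\mu|_K+\mu|_{X\setminus K}$ with both summands positive, and using that for positive measures and a positive kernel the mutual energy is additive in $[0,\infty]$ (principle of descent), I would expand $\kappa(\mu,\mu)=\kappa(\mu|_K,\mu|_K)+2\kappa(\mu|_K,\mu|_{X\setminus K})+\kappa(\mu|_{X\setminus K},\mu|_{X\setminus K})$. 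Since the left-hand side is finite and every summand is nonnegative, this forces $\kappa(\mu|_K,\mu|_K)\leqslant\kappa(\mu,\mu)<\infty$, so that $\mu|_K\in\mathcal E^+_K$. But $c_*(A)=0$ gives $\mathcal E^+_K=\{0\}$ for every compact $K\subset A$ by \eqref{2.3.1}, whence $\mu|_K=0$ and $\mu(K)=0$.

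Next I would deduce local $\mu$-negligibility of $A$ from the inner regularity of the Radon measure $\mu$. For an arbitrary compact $K_0\subset X$, the set $A\cap K_0$ is $\mu$-measurable (as $A$ is) and of finite measure (being contained in $K_0$), so inner regularity yields $\mu(A\cap K_0)=\sup\{\mu(K):K\subset A\cap K_0,\ K\text{ compact}\}$. Each such $K$ is a compact subset of $A$, hence $\mu(K)=0$ by the previous step, and therefore $\mu^*(A\cap K_0)=\mu(A\cap K_0)=0$. As $K_0$ was arbitrary, $A$ is locally $\mu$-negligible, which settles the first assertion.

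Finally, for the second assertion I would upgrade local negligibility to negligibility under the extra hypothesis that $A$ is $\mu$-$\sigma$-finite, say $A\subset\bigcup_k Q_k$ with $\mu^*(Q_k)<\infty$. For each $k$ I would pick a $\mu$-measurable hull $Q_k'\supset Q_k$ with $\mu(Q_k')=\mu^*(Q_k)<\infty$; then $A\cap Q_k'$ is $\mu$-measurable of finite measure, and the inner-regularity argument just used (every compact $K\subset A\cap Q_k'\subset A$ has $\mu(K)=0$) gives $\mu(A\cap Q_k')=0$, hence $\mu^*(A\cap Q_k)=0$. By countable subadditivity of the outer measure applied to $A=\bigcup_k(A\cap Q_k)$, I obtain $\mu^*(A)\leqslant\sum_k\mu^*(A\cap Q_k)=0$, i.e.\ $A$ is $\mu$-negligible. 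The conceptual core is the single application of \eqref{2.3.1} to the trace $\mu|_K$; I expect the routine but most delicate point to be the passage from ``$\mu$ vanishes on every compact subset of $A$'' to negligibility, which rests on the inner regularity of $\mu$ on sets of finite measure together with the measurable-hull construction in the $\sigma$-finite case.
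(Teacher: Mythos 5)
Your proof is correct and is essentially the argument the paper intends: the lemma is stated there without a separate proof, as an immediate consequence of (\ref{2.3.1}), and your chain --- $\mu|_K\in\mathcal E^+_K=\{0\}$ for every compact $K\subset A$ because $\kappa(\mu|_K,\mu|_K)\leqslant\kappa(\mu,\mu)<\infty$ by positivity of the kernel, followed by inner regularity of the Radon measure $\mu$ and, in the $\sigma$-finite case, measurable hulls and countable subadditivity --- is exactly the standard filling-in of that deduction. (One terminological quibble: the additivity of mutual energy for positive measures is just linearity of the upper integral for a positive l.s.c.\ integrand, not the principle of descent, which concerns vague lower semicontinuity.)
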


Due to the perfectness of the kernel, for any $A$ with $c_*(A)<\infty$ (thus in particular for any compact $A$), there is a unique $\gamma_A\in\mathcal E^+$, called the {\it inner equilibrium measure\/} for $A$,
that minimizes the energy $\kappa(\nu,\nu)$ among all
$\nu\in\mathcal E^+$ having the property $\kappa\nu\geqslant1$ n.e.\ on $A$, where {\it n.e.}\ ({\it nearly everywhere\/}) means that the equality holds everywhere on $A$ except for a subset of inner capacity zero. Furthermore \cite[Theorem~4.1]{F1},
\begin{align*}&\kappa(\gamma_A,\gamma_A)=\gamma_A(X)=c_*(A),\\
&\kappa\gamma_A\geqslant1\text{ \ n.e.\ on $A$},\\
&\kappa\gamma_A\leqslant1\text{ \ on $S(\gamma_A)$}.
\end{align*}
Thus, if moreover Frostman's maximum principle holds,\footnote{A kernel $\kappa$ is said to satisfy {\it Frostman's maximum principle\/} (${}={}$the {\it first maximum principle\/}) if for any $\mu\in\mathfrak M^+$ with $\kappa\mu\leqslant1$ on $S(\mu)$, the same inequality holds on all of $X$. For the purposes of the present paper, it is enough, in fact, to suppose that this is true for any $\mu$ of {\it finite\/} energy.\label{F}}
then also
\begin{equation}\label{eqq}\kappa\gamma_A=1\text{ \ n.e.\ on $A$}.\end{equation}

Defining further the {\it outer capacity\/} of $A$ by
\[c^*(A):=\inf\,c_*(D),\]
where $D$ ranges over all open sets that contain $A$, we have $c_*(A)\leqslant c^*(A)$. A set $A$ is said to be {\it capacitable\/} if $c_*(A)=c^*(A)$, and then we shall simply write \[c(A):=c_*(A)=c^*(A);\]
this occurs e.g.\ whenever $A$ is open or compact. If $A$ is capacitable and $c(A)<\infty$, then the inner equilibrium measure $\gamma_A$ serves simultaneously as the (unique) {\it outer equilibrium measure\/} $\gamma_A^*$, minimizing the energy $\kappa(\nu,\nu)$ among all
$\nu\in\mathcal E^+$ with the property $\kappa\nu\geqslant1$ q.e.\ on $A$
(see \cite[Theorem~4.3, Lemma~4.3.4]{F1}).

Throughout Sects.~\ref{sec-inner}--\ref{sec-outer}, in addition to the (permanent) requirement of perfectness for the kernel $\kappa$, we shall tacitly assume that $\kappa$ satisfies the {\it domination principle\/} ($={}$the {\it second maximum principle\/}), which means that for any $\mu,\nu\in\mathcal E^+$ such that $\kappa\mu\leqslant\kappa\nu$ $\mu$-a.e., the same inequality holds on all of $X$.

\begin{example}\label{rem:clas} The Newtonian kernel $|x-y|^{2-n}$ and, more generally, the $\alpha$-Riesz kernel $|x-y|^{\alpha-n}$ of order $\alpha\in(0,2]$ on $\mathbb R^n$, $n\geqslant3$, satisfy the energy and consistency principles as well as the first and second maximum principles \cite[Theorems~1.10, 1.15, 1.18, 1.27, 1.29]{L}. So does the associated $\alpha$-Green kernel on an arbitrary open subset of $\mathbb R^n$, $n\geqslant3$ \cite[Theorems~4.6, 4.9, 4.11]{FZ}. The ($2$-)Green kernel on a planar Greenian set is likewise strictly positive definite \cite[Section~I.XIII.7]{Doob} and consistent \cite{E}, and it satisfies the first and second maximum principles (see \cite[Theorem~5.1.11]{AG}, \cite[Section~I.V.10]{Doob}). Finally, the logarithmic kernel $-\log\,|x-y|$ on a closed disc in $\mathbb R^2$ of radius ${}<1$ is strictly positive definite and satisfies Frostman's maximum principle \cite[Theorems~1.6, 1.16]{L}, and hence it is perfect by \cite[Theorem~3.4.2]{F1}. However, the domination principle then fails in general; it does hold only in a weaker sense where the measures $\mu,\nu$ involved in the ab\-ove-quo\-ted definition satisfy the additional requirement that $\nu(\mathbb R^2)\leqslant\mu(\mathbb R^2)$, cf.\ \cite[Theorem~3.2]{ST}.\footnote{Because of this obstacle, the theory of inner and outer balayage on a l.c.\ space $X$, developed in the present paper, does not cover the case of the logarithmic kernel on $\mathbb R^2$.\label{f-obstacle}}\end{example}

\section{Preliminaries}\label{sec:pr}

In all that follows, $\kappa$ denotes a positive, symmetric, l.s.c.,\ perfect kernel on a l.c.\ space $X$.\footnote{For the notation and terminology used here, see Sects.~\ref{sec1}, \ref{sec11}.} Given $\mu\in\mathcal E^+$ and $\mathcal F\subset\mathcal E^+$, write
\begin{equation*}\label{not1}\varrho(\mu,\mathcal F):=\inf_{\nu\in\mathcal F}\,\|\mu-\nu\|.\end{equation*}

\begin{theorem}\label{th-proj}Assume that\/ $\mathcal F$ is a strongly closed, convex subset of\/ $\mathcal E^+$. For any\/ $\mu\in\mathcal E^+$, there is a unique\/ $\mu_{\mathcal F}\in\mathcal F$ having the property
\begin{equation*}\label{def-proj}\|\mu-\mu_{\mathcal F}\|=\min_{\nu\in\mathcal F}\,\|\mu-\nu\|=\varrho(\mu,\mathcal F);\end{equation*}
it is called the orthogonal projection of\/ $\mu$ in the pre-Hilbert space\/ $\mathcal E$ onto\/ $\mathcal F$. If moreover\/ $\mathcal F$ is a cone in\/ $\mathcal E^+$ that contains zero measure, then such a\/ $\mu_{\mathcal F}$ is characterized uniquely within\/ $\mathcal F$  by the two relations
\begin{align}
\label{proj1}&(\mu-\mu_{\mathcal F},\nu)\leqslant0\text{ \ for all\ }\nu\in\mathcal F,\\
\label{proj2}&(\mu-\mu_{\mathcal F},\mu_{\mathcal F})=0.
\end{align}
\end{theorem}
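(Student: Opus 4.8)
The plan is to carry over the standard Hilbert-space projection argument to the present pre-Hilbert setting, the only delicate point being that $\mathcal{E}$ itself is not complete, so completeness must be supplied by the cone $\mathcal{E}^+$ via the perfectness of $\kappa$.

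For existence I would put $d:=\varrho(\mu,\mathcal{F})$ and pick a minimizing sequence $(\nu_k)\subset\mathcal{F}$ with $\|\mu-\nu_k\|\to d$. Applying the parallelogram identity to the elements $\mu-\nu_i$ and $\mu-\nu_j$ of $\mathcal{E}$ and using that $\tfrac12(\nu_i+\nu_j)\in\mathcal{F}$ by convexity (so that $\|\mu-\tfrac12(\nu_i+\nu_j)\|\geqslant d$), I obtain $\|\nu_i-\nu_j\|^2\leqslant2\|\mu-\nu_i\|^2+2\|\mu-\nu_j\|^2-4d^2$, whose right-hand side tends to $0$. Hence $(\nu_k)$ is a strong Cauchy sequence, and therefore a strong Cauchy net, in $\mathcal{E}^+$. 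Perfectness of $\kappa$ makes $\mathcal{E}^+$ strongly complete, so $(\nu_k)$ converges strongly to some $\mu_{\mathcal{F}}\in\mathcal{E}^+$; since $\mathcal{F}$ is strongly closed, $\mu_{\mathcal{F}}\in\mathcal{F}$, and continuity of the norm gives $\|\mu-\mu_{\mathcal{F}}\|=d$. Uniqueness follows from the same identity: if $\mu_1,\mu_2\in\mathcal{F}$ both attain $d$, then $\tfrac12(\mu_1+\mu_2)\in\mathcal{F}$ yields $\|\mu_1-\mu_2\|^2\leqslant2d^2+2d^2-4d^2=0$, whence $\mu_1=\mu_2$ because the energy principle makes $\|\cdot\|$ a genuine norm. (Alternatively, uniqueness is immediate from Lemma~\ref{4.1.1}.)

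For the characterization I would first note the variational inequality valid for any convex $\mathcal{F}$: as $(1-t)\mu_{\mathcal{F}}+t\nu\in\mathcal{F}$ for $\nu\in\mathcal{F}$ and $t\in[0,1]$, the function $t\mapsto\|(\mu-\mu_{\mathcal{F}})-t(\nu-\mu_{\mathcal{F}})\|^2$ has a minimum at $t=0$, so its right derivative there is $\geqslant0$, giving $(\mu-\mu_{\mathcal{F}},\nu-\mu_{\mathcal{F}})\leqslant0$ for all $\nu\in\mathcal{F}$. When $\mathcal{F}$ is a cone containing $0$, the choice $\nu=0$ gives $(\mu-\mu_{\mathcal{F}},\mu_{\mathcal{F}})\geqslant0$, while $\nu=2\mu_{\mathcal{F}}\in\mathcal{F}$ gives the reverse inequality, so that \eqref{proj2} holds; feeding \eqref{proj2} back into the variational inequality then produces \eqref{proj1}. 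For the converse, given $\lambda\in\mathcal{F}$ satisfying \eqref{proj1} and \eqref{proj2}, I would expand $\|\mu-\nu\|^2=\|\mu-\lambda\|^2+2(\mu-\lambda,\lambda-\nu)+\|\lambda-\nu\|^2$ for arbitrary $\nu\in\mathcal{F}$ and observe that $(\mu-\lambda,\lambda-\nu)=-(\mu-\lambda,\nu)\geqslant0$ by \eqref{proj2} and \eqref{proj1}; hence $\|\mu-\nu\|\geqslant\|\mu-\lambda\|$, so $\lambda$ is the minimizer and coincides with $\mu_{\mathcal{F}}$ by the uniqueness just shown.

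The main obstacle is the existence step: in the incomplete pre-Hilbert space $\mathcal{E}$ one cannot invoke the classical projection theorem directly, and the whole argument hinges on replacing completeness of $\mathcal{E}$ by strong completeness of $\mathcal{E}^+$ (guaranteed by perfectness of $\kappa$), together with the strong closedness of $\mathcal{F}$ needed to place the limit back inside $\mathcal{F}$. The remaining steps are routine pre-Hilbert computations, with the energy principle ensuring that vanishing norm forces equality of measures.
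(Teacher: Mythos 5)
Your proof is correct and follows essentially the same route as the paper: the paper simply delegates the minimizing-sequence/parallelogram argument and the variational characterization to Edwards' textbook (Theorem~1.12.3 and Proposition~1.12.4(2) of \cite{E2}), while you write it out in full. The one genuinely non-routine point --- that completeness of $\mathcal F$ is supplied by strong closedness inside the strongly complete cone $\mathcal E^+$ (perfectness of $\kappa$), since $\mathcal E$ itself is incomplete --- is exactly the observation the paper's proof hinges on as well.
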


\begin{proof}We conclude this from \cite{E2} (Theorem~1.12.3 and Proposition~1.12.4(2)) by noting that $\mathcal F$ is complete in the induced strong topology, $\mathcal F$ being a strongly closed subset of the strongly complete cone $\mathcal E^+$. (The existence of $\mu_{\mathcal F}$ is due to the consistency of the kernel, whereas the uniqueness is implied by the energy principle.)\end{proof}

\begin{definition}[{\rm Fuglede \cite{F71}}]\label{def-quasi} A set $A\subset X$ is said to be {\it quasiclosed\/} if
\[
\inf\,\bigl\{c^*(A\bigtriangleup F):\ F\text{ closed, }F\subset X\bigr\}=0,
\]
where $\bigtriangleup$ denotes the symmetric difference.
\end{definition}

\begin{lemma}\label{l-quasi0}For\/ $A\subset X$ quasiclosed, the cone\/ $\mathfrak M^+_A$ and its truncated subcone\/
$\check{\mathfrak M}^+_A:=\{\nu\in\mathfrak M^+_A:\ \nu_*(A)\leqslant1\}$ are both vaguely closed.\end{lemma}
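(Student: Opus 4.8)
The plan is to prove the vague closedness of $\mathfrak M^+_A$ first and to deduce that of $\check{\mathfrak M}^+_A$ from it. Every $\nu\in\mathfrak M^+_A$ is concentrated on $A$, so $\nu_*(A)=\nu(X)$, and therefore $\check{\mathfrak M}^+_A=\mathfrak M^+_A\cap\{\nu\in\mathfrak M^+:\ \nu(X)\leqslant1\}$. Applying Lemma~\ref{lemma-semi} to $\psi\equiv1$ shows that $\nu\mapsto\nu(X)=\int 1\,d\nu$ is vaguely l.s.c., whence its sublevel set $\{\nu(X)\leqslant1\}$ is vaguely closed; thus $\check{\mathfrak M}^+_A$ will be vaguely closed as the intersection of two vaguely closed sets, and it remains only to treat $\mathfrak M^+_A$.

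For $\mathfrak M^+_A$ I would first reduce the support. Since $A\subset\overline A$, every $\nu\in\mathfrak M^+_A$ is concentrated on the closed set $\overline A$, so $\mathfrak M^+_A\subset\mathfrak M^+_{\overline A}$; and for the closed set $\overline A$ one has $\mathfrak M^+_{\overline A}=\bigcap\{\nu\in\mathfrak M^+:\ \int g\,d\nu=0\}$, the intersection running over all $g\in C_0$ with $g\geqslant0$ and $S(g)\subset\overline A^{\,c}$. As each $\nu\mapsto\int g\,d\nu$ is vaguely continuous, $\mathfrak M^+_{\overline A}$ is vaguely closed. Hence any vague limit $\nu_0$ of a net $(\nu_i)\subset\mathfrak M^+_A$ satisfies $S(\nu_0)\subset\overline A$, and the only thing left to check is that $\overline A\setminus A$ is locally $\nu_0$-negligible, i.e.\ that $\nu_0\in\mathfrak M^+_A$.

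To this end I would localize the discrepancy inside a set of small capacity. Given $\varepsilon>0$, quasiclosedness furnishes a closed $F$ with $c^*(A\bigtriangleup F)<\varepsilon$, and then an open $D\supset A\bigtriangleup F$ with $c_*(D)<\varepsilon$. Off $D$ the sets $A$ and $F$ agree, since $A\bigtriangleup F\subset D$; in particular $A\cap F^{\,c}\subset A\bigtriangleup F\subset D$, so that the mass each $\nu_i$ places outside $F$ is carried by $A\cap F^{\,c}\subset D$ and is thus at most $\nu_i(D)$. The equilibrium measure $\gamma_D$ (available because $c_*(D)<\infty$) provides the barrier $\kappa\gamma_D\geqslant1$ n.e.\ on $D$ with $\gamma_D(X)=\|\gamma_D\|^2=c_*(D)<\varepsilon$, whence $\nu_i(D)\leqslant\int\kappa\gamma_D\,d\nu_i=\kappa(\nu_i,\gamma_D)\leqslant\|\nu_i\|\sqrt{c_*(D)}$ by Cauchy--Schwarz. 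Thus, writing $\nu_i=\nu_i|_F+\nu_i|_{F^{\,c}}$, each $\nu_i$ is concentrated on $F$ up to the controllable remainder $\nu_i|_{F^{\,c}}$; invoking the vague closedness of $\mathfrak M^+_F$ for the first summand, the smallness of the second, and finally letting $\varepsilon\downarrow0$ (so that $F$ approximates $A$), I would conclude that $\nu_0$ is concentrated on $A$.

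The decisive difficulty, and the step I expect to be the main obstacle, is precisely the uniform smallness of the remainders $\nu_i(D)$ along the whole net: outer-capacity smallness of $A\bigtriangleup F$ converts into small mass only with an energy (or at least a potential) bound at one's disposal, since an arbitrary positive measure may well charge a set of outer capacity zero. Reconciling this with the vague --- as opposed to strong --- mode of convergence, and arranging the book-keeping so that the two summands $\nu_i|_F$ and $\nu_i|_{F^{\,c}}$ can be passed to the limit simultaneously as $\varepsilon\downarrow0$, is the heart of the matter; here the strong completeness of $\mathcal E^+$, the lower semicontinuity of potentials furnished by Lemma~\ref{lemma-semi}, and the negligibility of sets of outer capacity zero supplied by Lemma~\ref{l-negl} are the tools I would rely on.
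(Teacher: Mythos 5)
Your treatment of the truncated cone is correct and is exactly what the paper does: for $\nu\in\mathfrak M^+_A$ one has $\nu_*(A)=\nu(X)$, and $\nu\mapsto\nu(X)$ is vaguely l.s.c.\ by Lemma~\ref{lemma-semi} with $\psi=1_X$, so $\check{\mathfrak M}^+_A$ is the intersection of $\mathfrak M^+_A$ with a vaguely closed sublevel set. So the whole weight of the lemma rests on the vague closedness of $\mathfrak M^+_A$ itself --- and there the paper gives no argument at all: it simply quotes this from \cite[Corollary~6.2]{Fu4}.

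Your attempted direct proof of that main assertion has a genuine gap, essentially the one you flag yourself, and it is not one that can be closed along the route you propose. The chain $\nu_i(D)\leqslant\int\kappa\gamma_D\,d\nu_i\leqslant\|\nu_i\|\sqrt{c_*(D)}$ fails for the measures actually at hand: $\mathfrak M^+_A$ consists of \emph{arbitrary} positive Radon measures concentrated on $A$, so $\|\nu_i\|$ need not be finite, and even the first inequality requires $\kappa\gamma_D\geqslant1$ to hold $\nu_i$-a.e.\ on $D$, whereas it holds only n.e.\ there --- an exceptional set that a measure of infinite energy may well charge (Lemma~\ref{l-negl} is available only on $\mathcal E^+$). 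Even for a net in $\mathcal E^+_A$, vague convergence supplies no uniform bound on the energies $\|\nu_i\|$, so the remainders $\nu_i(D)$ are not uniformly small. The obstruction is structural rather than technical: a unit point mass placed at a point of outer capacity zero has arbitrarily small ``capacity footprint'' but unit mass, so no capacity estimate applied to individual members of $\mathfrak M^+_A$ can convert smallness of $c^*(A\bigtriangleup F)$ into smallness of mass. (There is also a secondary issue: the traces $\nu_i|_F$ of a vaguely convergent net need not themselves converge vaguely, so the two summands in your decomposition cannot simply be passed to the limit separately.) As written, your argument proves only the easy closed case $\mathfrak M^+_{\overline A}$; for the quasiclosed case you must either invoke \cite[Corollary~6.2]{Fu4} as the paper does, or restrict the statement to measures that do not charge sets of outer capacity zero --- which happens to suffice for the only application made of this lemma, namely Lemma~\ref{l-quasi}, where the net lies in $\mathcal E^+_A$ and converges strongly.
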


\begin{proof}For $A$ quasiclosed, $\mathfrak M^+_A$ is vaguely closed according to \cite[Corollary~6.2]{Fu4}. Given a vague limit point $\nu$ of $\check{\mathfrak M}^+_A$, choose a net $(\nu_k)\subset\check{\mathfrak M}^+_A$ that converges to $\nu$ vaguely. Then $\nu\in\mathfrak M^+_A$ (see above), and moreover
\[\nu_*(A)=\nu(X)\leqslant\liminf_k\,\nu_k(X)=\liminf_k\,(\nu_k)_*(A)\leqslant1,\]
the former inequality being valid by Lemma~\ref{lsc} below. Thus indeed $\nu\in\check{\mathfrak M}^+_A$.\end{proof}

\begin{lemma}\label{lsc}The mapping\/ $\mu\mapsto\mu(X)$ is vaguely l.s.c.\ on $\mathfrak M^+$.\end{lemma}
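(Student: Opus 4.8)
The plan is to prove that $\mu\mapsto\mu(X)$ is vaguely lower semicontinuous on $\mathfrak M^+$ by expressing the total mass $\mu(X)$ as a supremum of vaguely continuous (or at least vaguely l.s.c.) functionals, since a pointwise supremum of l.s.c.\ functions is again l.s.c. The natural candidates are integrals $\int f\,d\mu$ against functions $f\in C_0(X)$ with $0\leqslant f\leqslant1$: each such functional $\mu\mapsto\int f\,d\mu$ is vaguely continuous by the very definition of the vague topology, and clearly $\int f\,d\mu\leqslant\mu(X)$. The crux will be to establish the reverse direction, namely that
\[
\mu(X)=\sup_{f\in C_0,\,0\leqslant f\leqslant1}\,\int f\,d\mu.
\]

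First I would recall that $X$ is locally compact Hausdorff, so by Urysohn's lemma there is an ample supply of functions $f\in C_0(X)$ with $0\leqslant f\leqslant1$; in particular, for every compact $K\subset X$ there exists such an $f$ with $f\equiv1$ on $K$. Given any positive Radon measure $\mu$ and any real $t<\mu(X)$, I would use inner regularity of $\mu$ (a defining property of Radon measures) to select a compact $K\subset X$ with $\mu(K)>t$, then pick $f\in C_0$ with $0\leqslant f\leqslant1$ and $f\geqslant1_K$. Then $\int f\,d\mu\geqslant\mu(K)>t$, which shows the supremum over such $f$ equals $\mu(X)$ (allowing $\mu(X)=+\infty$). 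This identity, combined with the fact that each $\mu\mapsto\int f\,d\mu$ is vaguely continuous, hence vaguely l.s.c., and that the supremum of an arbitrary family of l.s.c.\ functions is l.s.c., yields the claim.

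Alternatively, and perhaps more in keeping with the machinery already invoked in the paper, I could deduce the result directly from Lemma~\ref{lemma-semi}: the constant function $\psi\equiv1$ is l.s.c.\ and nonnegative on $X$, so by that lemma $\mu\mapsto\int 1\,d\mu=\mu(X)$ is vaguely l.s.c.\ on $\mathfrak M^+$. This is essentially immediate, with the integral read as an upper integral as in the statement of Lemma~\ref{lemma-semi}, and it is the shortest route. I would likely present this one-line argument as the main proof and mention the Urysohn/inner-regularity approach only if an explicitly self-contained justification were wanted.

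The main obstacle, if any, is a point of care rather than of difficulty: one must ensure that the total mass $\mu(X)=+\infty$ is correctly handled, since $\mathfrak M^+$ contains unbounded measures and lower semicontinuity into $[0,\infty]$ must be verified in that extended-real setting. The supremum formulation accommodates this transparently (the supremum is simply $+\infty$ when $\mu(X)=+\infty$, and the defining inequality of l.s.c.\ is preserved), and Lemma~\ref{lemma-semi} is already stated for l.s.c.\ functions $\psi:X\to[0,\infty]$ with values in the extended reals, so either approach closes the argument cleanly.
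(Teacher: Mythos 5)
Your proposal is correct, and the one-line argument you designate as the main proof (apply Lemma~\ref{lemma-semi} with $\psi:=1_X$) is exactly the proof the paper gives. The alternative via Urysohn's lemma and inner regularity, writing $\mu(X)$ as a supremum of the vaguely continuous functionals $\mu\mapsto\int f\,d\mu$ over $f\in C_0$ with $0\leqslant f\leqslant1$, is also sound and is essentially the standard proof of Lemma~\ref{lemma-semi} itself specialized to $\psi=1_X$, so nothing further is needed.
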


\begin{proof} This is obvious from Lemma~\ref{lemma-semi} with $\psi:=1_X$.\end{proof}

\begin{lemma}\label{l-quasi}If\/ $A\subset X$ is quasiclosed, then the cone\/ $\mathcal E^+_A$ is strongly closed; hence
\begin{equation}\label{eq-id0}\mathcal E_A^+=\mathcal E_A',\end{equation}
$\mathcal E_A'$ denoting the strong closure of\/ $\mathcal E_A^+$. The same holds if\/ $\mathcal E^+_A$ is replaced by  its truncated subcone\/
$\check{\mathcal E}^+_A:=\check{\mathfrak M}^+_A\cap\mathcal E$
while\/ $\mathcal E_A'$ by\/ $\check{\mathcal E}_A'$, the strong closure of\/ $\check{\mathcal E}^+_A$.
\end{lemma}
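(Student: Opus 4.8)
The plan is to prove that $\mathcal E^+_A$ is strongly closed for quasiclosed $A$, and then deduce $\mathcal E_A^+=\mathcal E_A'$ directly from the definition of $\mathcal E_A'$ as the strong closure of $\mathcal E_A^+$. The key idea is to exploit the interplay between the strong and vague topologies that perfectness of the kernel provides: strong convergence in $\mathcal E^+$ forces vague convergence to the same limit. First I would take an arbitrary strong Cauchy net $(\nu_k)\subset\mathcal E^+_A$ converging strongly to some $\nu\in\mathcal E^+$ (such a limit exists and is unique since $\mathcal E^+$ is strongly complete). The goal is to show $\nu\in\mathcal E^+_A$, i.e.\ that $\nu$ is concentrated on $A$.

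The main step is to pass from strong to vague convergence. Because the kernel is perfect (consistent plus strictly positive definite), the strong Cauchy net $(\nu_k)$ converges vaguely to its unique strong limit $\nu$, as recalled in the discussion following the definition of consistency in Sect.~\ref{sec11}. Thus $\nu$ is a vague limit point of the net $(\nu_k)\subset\mathfrak M^+_A$. Now I would invoke Lemma~\ref{l-quasi0}: for $A$ quasiclosed, the cone $\mathfrak M^+_A$ is vaguely closed, so the vague limit $\nu$ lies in $\mathfrak M^+_A$. Combining $\nu\in\mathfrak M^+_A$ with $\nu\in\mathcal E$ gives $\nu\in\mathfrak M^+_A\cap\mathcal E=\mathcal E^+_A$, which is exactly strong closedness of $\mathcal E^+_A$. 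The identity \eqref{eq-id0} is then immediate, since $\mathcal E_A'$ is by definition the smallest strongly closed set containing $\mathcal E^+_A$, and we have just shown $\mathcal E^+_A$ is already strongly closed.

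For the truncated subcone $\check{\mathcal E}^+_A$, I would run the same argument. Starting from a strong Cauchy net $(\nu_k)\subset\check{\mathcal E}^+_A$ with strong (hence vague) limit $\nu$, Lemma~\ref{l-quasi0} shows that the \emph{truncated} cone $\check{\mathfrak M}^+_A$ is also vaguely closed for $A$ quasiclosed, so $\nu\in\check{\mathfrak M}^+_A$, and therefore $\nu\in\check{\mathfrak M}^+_A\cap\mathcal E=\check{\mathcal E}^+_A$. This yields strong closedness of $\check{\mathcal E}^+_A$ and hence $\check{\mathcal E}^+_A=\check{\mathcal E}_A'$ exactly as before. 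The constraint $\nu_*(A)\leqslant1$ survives the limit precisely because Lemma~\ref{l-quasi0} already incorporated the vague lower semicontinuity of $\mu\mapsto\mu(X)$ (Lemma~\ref{lsc}) into the closedness of $\check{\mathfrak M}^+_A$.

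I expect the only genuine obstacle to be the strong-to-vague passage, and this is entirely supplied by perfectness: without consistency one could not guarantee that the strong limit and the vague limit point coincide, and without the energy principle the strong limit would not be unique. Both are in force by the standing hypotheses of Sect.~\ref{sec:pr}. The rest is bookkeeping: the heavy lifting on the topological side (vague closedness of $\mathfrak M^+_A$ and $\check{\mathfrak M}^+_A$) has already been done in Lemma~\ref{l-quasi0}, so this lemma is essentially the statement that intersecting a vaguely closed cone with $\mathcal E$ produces a strongly closed subset, once one knows strong convergence implies vague convergence to the same point.
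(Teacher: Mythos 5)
Your argument is correct and is essentially the paper's own proof: take a strongly convergent net in $\mathcal E^+_A$ (resp.\ $\check{\mathcal E}^+_A$), use perfectness to upgrade strong convergence to vague convergence to the same limit, and then apply the vague closedness of $\mathfrak M^+_A$ (resp.\ $\check{\mathfrak M}^+_A$) from Lemma~\ref{l-quasi0}. No substantive differences.
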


\begin{proof}Given $\nu\in\mathcal E'_A$, choose a net $(\nu_k)\subset\mathcal E^+_A$ that converges to $\nu$ strongly. The kernel being perfect, $\nu_k\to\nu$ also vaguely (Sect.~\ref{sec11}). As the cone $\mathfrak M^+_A$ is vaguely closed (Lemma~\ref{l-quasi0}), we have $\nu\in\mathfrak M^+_A$, and hence actually $\nu\in\mathcal E^+_A$. Since the truncated cone $\check{\mathfrak M}^+_A$ is likewise vaguely closed (Lemma~\ref{l-quasi0}), the rest of the proof runs as before.
\end{proof}

\section{Inner balayage}\label{sec-inner}

{\it From now on, a\/ {\rm(}perfect\/{\rm)} kernel\/ $\kappa$ on a l.c.\ space\/ $X$ is supposed to satisfy the domination principle.}
Fix arbitrary $\mu\in\mathcal E^+$ and $A\subset X$.

\begin{definition}\label{def-bal}$\mu^A=\mu^A_*\in\mathcal E^+$ is said to be an {\it inner balayage\/} of $\mu$ to $A$ if
\begin{equation}\label{eq-bal}\kappa\mu^A=\inf_{\nu\in\Lambda_{A,\mu}}\,\kappa\nu\text{ \ on\ }X,\end{equation}
where
\[\Lambda_{A,\mu}:=\bigl\{\nu\in\mathcal E^+: \ \kappa\nu\geqslant\kappa\mu\text{ \ n.e.\ on $A$}\bigr\}.\]
\end{definition}

As seen from \cite[Section~19, Theorem~1]{Ca2}, this definition is in agreement with Cartan's (classical) concept of inner Newtonian balayage on $\mathbb R^n$, $n\geqslant3$ (cf.\ also \cite[Theorem~4.3]{Z-bal} providing a characteristic property of inner Riesz balayage).

\begin{lemma}\label{l-unique}The inner balayage\/ $\mu^A$ is unique\/ {\rm(}if it exists\/{\rm)}.\end{lemma}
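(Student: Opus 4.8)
The plan is to deduce uniqueness directly from the energy principle (strict positive definiteness of $\kappa$). The crucial observation is that the defining relation~\eqref{eq-bal} prescribes the \emph{potential} $\kappa\mu^A$ as one and the same function on $X$, namely $\inf_{\nu\in\Lambda_{A,\mu}}\kappa\nu$, an expression that makes no reference to the measure $\mu^A$ itself. Hence, if $\mu_1^A$ and $\mu_2^A$ are both inner balayages of $\mu$ to $A$, their potentials must coincide at every point of $X$:
\[\kappa\mu_1^A=\kappa\mu_2^A=:p\quad\text{everywhere on $X$}.\]

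First I would record that, both measures belonging to $\mathcal E^+\subset\mathcal E$ and $\mathcal E$ being a linear space, the difference $\sigma:=\mu_1^A-\mu_2^A$ lies in $\mathcal E$, so that $\kappa(\sigma,\sigma)=\|\sigma\|^2$ is a well-defined finite number. Next I would evaluate the pairwise energies by integrating the common potential against the respective measures: using the symmetry of $\kappa$ together with Fubini's theorem for positive measures,
\[\kappa(\mu_i^A,\mu_j^A)=\int\kappa\mu_i^A\,d\mu_j^A=\int p\,d\mu_j^A\qquad(i,j\in\{1,2\}).\]
In particular $\int p\,d\mu_1^A=\kappa(\mu_2^A,\mu_1^A)=\kappa(\mu_1^A,\mu_2^A)=\int p\,d\mu_2^A$, so all four quantities $\kappa(\mu_i^A,\mu_j^A)$ share a single finite value $I$, and expanding the square yields
\[\|\sigma\|^2=\kappa(\mu_1^A,\mu_1^A)-2\kappa(\mu_1^A,\mu_2^A)+\kappa(\mu_2^A,\mu_2^A)=I-2I+I=0.\]

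It then remains only to invoke the energy principle: since $\kappa$ is strictly positive definite and $\sigma\in\mathcal E$ has $\kappa(\sigma,\sigma)=0$, we conclude $\sigma=0$, that is, $\mu_1^A=\mu_2^A$. I do not anticipate any serious obstacle; once the potentials are seen to agree everywhere, the claim is essentially a one-line consequence of strict positive definiteness. The only points requiring a moment's care are the finiteness of the energies involved (guaranteed by $\mu_i^A\in\mathcal E$) and the Fubini interchange turning each mutual energy into an integral of $p$, both of which are entirely routine for measures of finite energy.
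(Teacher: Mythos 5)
Your proof is correct and follows essentially the same route as the paper's: both observe that any two inner balayages have identical potentials (each being the same pointwise infimum over $\Lambda_{A,\mu}$), deduce $\|\mu_1^A-\mu_2^A\|=0$, and conclude by the energy principle. The only difference is that the paper cites Fuglede's Lemma~3.2.1(a), which covers potentials agreeing merely quasi-everywhere, whereas you carry out the Fubini computation of the mutual energies directly --- a legitimate self-contained shortcut here, since the potentials agree everywhere on $X$.
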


\begin{proof} If both $\theta_1,\theta_2\in\mathcal E^+$ satisfy (\ref{eq-bal}), then $\kappa\theta_1$ and $\kappa\theta_2$ take finite equal values q.e.\ on $X$, cf.\ \cite[Corollary to Lemma~3.2.3]{F1}. Applying \cite[Lemma~3.2.1(a)]{F1} therefore gives $\|\theta_1-\theta_2\|=0$, and hence $\theta_1=\theta_2$, by the energy principle.\end{proof}

\subsection{Existence of the inner balayage. Alternative definitions}\label{sec-3-1} Given $\mu\in\mathcal E^+$ and $A\subset X$, denote by $\mu_{\mathcal E_A'}$ the orthogonal projection of $\mu$ in the pre-Hil\-bert space $\mathcal E$ onto $\mathcal E_A'$, the strong closure of $\mathcal E_A^+$. Such a projection $\mu_{\mathcal E_A'}$ exists and is unique by Theorem~\ref{th-proj} applied to the (strongly closed, convex) set $\mathcal E_A'$.

\begin{theorem}\label{th-bal-ex}Given\/ $\mu\in\mathcal E^+$ and\/ $A\subset X$, the inner balayage\/ $\mu^A$, introduced by Definition\/~{\rm\ref{def-bal}}, does exist. Actually,
\begin{equation}\label{proj''}\mu^A=\mu_{\mathcal E_A'},\end{equation}
and hence the inner balayage\/ $\mu^A$ can alternatively be determined by the two formulae
\[\mu^A\in\mathcal E_A',\quad\|\mu-\mu^A\|=\min_{\nu\in\mathcal E_A'}\,\|\mu-\nu\|=\inf_{\nu\in\mathcal E_A^+}\,\|\mu-\nu\|.\]
Furthermore, $\mu^A$ has the properties
\begin{align}
\label{eq-bala-f1}\kappa\mu^A&=\kappa\mu\text{ \ n.e.\ on\ }A,\\
\label{eq-bala-f0}\kappa\mu^A&=\kappa\mu\text{ \ $\mu^A$-a.e.,}\\
\kappa\mu^A&\leqslant\kappa\mu\text{ \ on\ }X,\label{eq-bala-f2}
\end{align}
and it can equivalently be defined as the only measure in\/ $\mathcal E_A'$ satisfying\/ {\rm(\ref{eq-bala-f1})}.\footnote{That the inner balayage $\mu^A$ is uniquely determined by (\ref{eq-bala-f1}) within $\mathcal E_A'$ ($A$ being arbitrary) seems to be unknown before even for the (classical) Newtonian, $\alpha$-Riesz, and $\alpha$-Green kernels.}
\end{theorem}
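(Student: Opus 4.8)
The plan is to identify the inner balayage $\mu^A$ with the orthogonal projection $\mu_{\mathcal E_A'}$ and then read off the remaining properties from the projection characterization in Theorem~\ref{th-proj}. First I would set $\lambda:=\mu_{\mathcal E_A'}$, which exists and is unique since $\mathcal E_A'$ is a strongly closed, convex cone containing $0$. Applying Theorem~\ref{th-proj} to $\mathcal F=\mathcal E_A'$ yields the two variational relations $(\mu-\lambda,\nu)\leqslant0$ for all $\nu\in\mathcal E_A'$ and $(\mu-\lambda,\lambda)=0$. The heart of the argument is to translate these inner-product inequalities into pointwise potential inequalities. Testing the first relation against $\nu\in\mathcal E_K^+$ for compact $K\subset A$ and using Fubini (so that $(\mu-\lambda,\nu)=\int(\kappa\mu-\kappa\lambda)\,d\nu$) should give $\kappa\lambda\geqslant\kappa\mu$ on a suitable large set; combined with the reverse direction this is how one extracts (\ref{eq-bala-f1}), namely $\kappa\lambda=\kappa\mu$ n.e.\ on $A$. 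Relation (\ref{proj2}), rewritten as $\int(\kappa\mu-\kappa\lambda)\,d\lambda=0$ together with the sign information, should force equality $\lambda$-a.e., giving (\ref{eq-bala-f0}).

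The anticipated main obstacle is deriving the global inequality (\ref{eq-bala-f2}), $\kappa\lambda\leqslant\kappa\mu$ everywhere on $X$, from the a.e.\ statement (\ref{eq-bala-f0}). This is precisely where the domination principle, assumed throughout this section, must enter: since $\kappa\lambda\leqslant\kappa\mu$ holds $\lambda$-a.e.\ (by (\ref{eq-bala-f0})) and $\mu,\lambda\in\mathcal E^+$, the second maximum principle promotes this to $\kappa\lambda\leqslant\kappa\mu$ on all of $X$. I would be careful here that the hypotheses of the domination principle as stated (the inequality holding $\mu$-a.e.\ for the first measure) are applied in the correct orientation, reading $\kappa\lambda\leqslant\kappa\mu$ as the relevant comparison with $\lambda$ playing the role of the measure off whose support we dominate.

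With (\ref{eq-bala-f1}) and (\ref{eq-bala-f2}) in hand, I would verify that $\lambda\in\Lambda_{A,\mu}$ and that $\lambda$ realizes the infimum in (\ref{eq-bal}), thereby showing $\lambda$ satisfies Definition~\ref{def-bal}. Indeed, (\ref{eq-bala-f1}) gives $\kappa\lambda\geqslant\kappa\mu$ n.e.\ on $A$, so $\lambda\in\Lambda_{A,\mu}$; conversely, for any $\nu\in\Lambda_{A,\mu}$ one has $\kappa\nu\geqslant\kappa\mu=\kappa\lambda$ n.e.\ on $A$, and since $\lambda$ is concentrated on (the closure of) $A$, another application of the domination principle should yield $\kappa\nu\geqslant\kappa\lambda$ everywhere on $X$, so that $\kappa\lambda=\inf_{\nu\in\Lambda_{A,\mu}}\kappa\nu$ and (\ref{eq-bal}) holds. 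This establishes existence and simultaneously the identity (\ref{proj''}) $\mu^A=\mu_{\mathcal E_A'}$, uniqueness already being guaranteed by Lemma~\ref{l-unique}. The alternative variational formula is then just the defining property of $\mu_{\mathcal E_A'}$, using $\inf_{\nu\in\mathcal E_A^+}\|\mu-\nu\|=\min_{\nu\in\mathcal E_A'}\|\mu-\nu\|$ because $\mathcal E_A'$ is the strong closure of $\mathcal E_A^+$.

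Finally, for the assertion that (\ref{eq-bala-f1}) characterizes $\mu^A$ uniquely within $\mathcal E_A'$, I would suppose $\theta\in\mathcal E_A'$ also satisfies $\kappa\theta=\kappa\mu$ n.e.\ on $A$. Then $\kappa\theta=\kappa\mu^A$ n.e.\ on $A$, hence $\mu^A$-a.e.\ and $\theta$-a.e.\ (both measures being concentrated on the closure of $A$, with exceptional sets of inner capacity zero being negligible by Lemma~\ref{l-negl}). The domination principle applied in both directions then gives $\kappa\theta=\kappa\mu^A$ everywhere on $X$, whence $\|\theta-\mu^A\|=0$ by \cite[Lemma~3.2.1(a)]{F1} and $\theta=\mu^A$ by the energy principle. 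The delicate point throughout will be the passage from n.e.\ statements to a.e.\ statements, which relies on Lemma~\ref{l-negl} together with the fact that a measure in $\mathcal E^+$ does not charge sets of inner capacity zero.
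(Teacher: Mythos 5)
Your overall strategy --- identify $\mu^A$ with the orthogonal projection $\mu_{\mathcal E_A'}$ and exploit the variational relations (\ref{proj1}), (\ref{proj2}) together with the domination principle --- is the right one, and it is in spirit the paper's. But there is a genuine gap in how you pass from statements holding n.e.\ on $A$ to statements holding $\lambda$-a.e., where $\lambda:=\mu_{\mathcal E_A'}$. For general $A$, the projection $\lambda$ belongs to $\mathcal E_A'$, the \emph{strong closure} of $\mathcal E_A^+$, and is in general \emph{not} concentrated on $A$ itself but only on $\overline A$ (the paper stresses that even $S(\mu^A)\cap A=\varnothing$ can occur). Consequently, knowing that $\kappa\lambda\geqslant\kappa\mu$ n.e.\ on $A$ tells you nothing about the sign of $\kappa\mu-\kappa\lambda$ on the part of $\overline A\setminus A$ carrying $\lambda$, so you cannot conclude $\kappa\lambda\geqslant\kappa\mu$ $\lambda$-a.e. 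This breaks your derivation of (\ref{eq-bala-f0}) from (\ref{proj2}) (the needed ``sign information'' is unavailable), and with it the domination step giving (\ref{eq-bala-f2}); the same defect invalidates your claim that $\kappa\nu\geqslant\kappa\lambda$ everywhere for $\nu\in\Lambda_{A,\mu}$ (``$\lambda$ is concentrated on the closure of $A$'' does not help, because the hypothesis $\kappa\nu\geqslant\kappa\mu$ is given only n.e.\ on $A$, not on $\overline A$) and your uniqueness argument. There is also a circularity: the half $\kappa\lambda\leqslant\kappa\mu$ n.e.\ on $A$ of (\ref{eq-bala-f1}), which you leave to ``the reverse direction,'' is obtained in the paper only as a consequence of the global inequality (\ref{eq-bala-f2}), which in your ordering comes later.

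The paper's remedy, which your proposal omits, is the compact exhaustion. All of the above steps are first carried out for compact $K\subset A$, where $\mu_{\mathcal E_K^+}=\mu_{\mathcal E_K'}$ \emph{is} concentrated on $K$ and bounded, so that n.e.\ on $K$ does imply $\mu_{\mathcal E_K^+}$-a.e.\ (Lemma~\ref{l-negl}) and the domination principle applies. One then proves the convergence assertions (\ref{eq-cont'}) and (\ref{later-ar}) of Theorem~\ref{th-pr} --- strong and vague convergence of $\mu_{\mathcal E_K^+}$ to $\mu_{\mathcal E_A'}$ and monotone pointwise convergence of the potentials as $K\uparrow A$, the latter relying on \cite[Theorem~3.6]{Fu5} and a further round of domination --- and only then transfers each inequality from $K$ to $A$ in the limit; the uniqueness within $\mathcal E_A'$ is likewise reduced to the compact case via Remark~\ref{rem-char} and (\ref{eq-bal-cont}). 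Your argument as written is essentially correct only when $\mathcal E_A^+$ is strongly closed (e.g.\ $A$ quasiclosed), which is the situation of Corollary~\ref{cor-def1}, not of the general theorem.
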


For a proof of Theorem~\ref{th-bal-ex}, see Sect.~\ref{sec-pr1} below. If $A=K$ is compact, the proof is based on the classical Gauss variational method \cite{C0,Ca2}, generalized to a perfect kernel satisfying the domination principle. In order to extend the relation
\[\mu^K=\mu_{\mathcal E_K^+}=\mu_{\mathcal E_K'}\] thereby obtained to $A$ arbitrary, we shall establish convergence assertions for the nets $(\mu_{\mathcal E_K^+})$ and $(\kappa\mu_{\mathcal E_K^+})$ when $K$ increases along the upper directed family of all compact subsets of $A$ (see Eqs.~(\ref{eq-cont'}) and (\ref{later-ar}) in Theorem~\ref{th-pr}).

\begin{corollary}\label{cor-def}Variational problem\/ {\rm(\ref{eq-bal})} on minimizing the potential\/ $\kappa\nu$ among the measures\/ $\nu\in\Lambda_{A,\mu}$ has the unique solution\/ $\mu^A$, determined by Theorem\/~{\rm\ref{th-bal-ex}}.
\end{corollary}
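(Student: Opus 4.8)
The plan is to deduce both the existence and the uniqueness of a solution directly from Theorem~\ref{th-bal-ex}, so that the corollary amounts to little more than rereading that theorem in the language of the extremal problem~(\ref{eq-bal}). The single substantive point is that the swept measure $\mu^A$ not only realizes the pointwise infimum of the potentials but is itself admissible, i.e.\ belongs to the constraint class $\Lambda_{A,\mu}$.

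First I would check admissibility. By Theorem~\ref{th-bal-ex} the inner balayage $\mu^A$ exists, lies in $\mathcal E^+$, and satisfies $\kappa\mu^A=\kappa\mu$ n.e.\ on $A$ in view of~(\ref{eq-bala-f1}); a fortiori $\kappa\mu^A\geqslant\kappa\mu$ n.e.\ on $A$, which is precisely the defining condition for membership in $\Lambda_{A,\mu}$. Together with the defining relation~(\ref{eq-bal}), namely $\kappa\mu^A=\inf_{\nu\in\Lambda_{A,\mu}}\kappa\nu$ on $X$, this shows that $\mu^A$ is an admissible measure whose potential attains the pointwise infimum over $\Lambda_{A,\mu}$. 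Hence $\mu^A$ solves the variational problem, the infimum being in fact a minimum.

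For uniqueness I would reuse the reasoning of Lemma~\ref{l-unique}. Let $\lambda\in\Lambda_{A,\mu}$ be any solution; then $\kappa\lambda=\inf_{\nu\in\Lambda_{A,\mu}}\kappa\nu=\kappa\mu^A$ everywhere on $X$. Since $\lambda$ and $\mu^A$ are measures of finite energy with equal, finite-valued potentials q.e.\ on $X$, \cite[Lemma~3.2.1(a)]{F1} gives $\|\lambda-\mu^A\|=0$, and the energy principle then forces $\lambda=\mu^A$.

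No real obstacle arises, since all the analytic work is already contained in Theorem~\ref{th-bal-ex}; the corollary merely repackages it. The only step that must not be glossed over is the verification $\mu^A\in\Lambda_{A,\mu}$: Definition~\ref{def-bal} by itself guarantees only that $\mu^A$ attains the infimum of the potentials among competitors, and it is property~(\ref{eq-bala-f1}) that upgrades this to attainment \emph{within} the admissible class, which is what the extremal problem requires.
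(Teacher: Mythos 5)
Your proposal is correct and follows essentially the same route as the paper: admissibility of $\mu^A$ via~(\ref{eq-bala-f1}) combined with the attainment of the infimum from Theorem~\ref{th-bal-ex}, and uniqueness via the argument of Lemma~\ref{l-unique} (which the paper simply cites rather than re-deriving). You also correctly single out the verification $\mu^A\in\Lambda_{A,\mu}$ as the one substantive step.
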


\begin{proof}By Theorem~\ref{th-bal-ex} and Lemma~\ref{l-unique}, the infimum in (\ref{eq-bal}) is achieved at the unique measure $\mu^A$, and moreover $\mu^A\in\Lambda_{A,\mu}$, by (\ref{eq-bala-f1}). Hence the infimum in (\ref{eq-bal}) is indeed an actual minimum.
\end{proof}

The inner balayage $\mu^A$ is {\it not\/} concentrated on the set $A$ itself, but on its closure,
and this occurs even in the case of the Newtonian kernel on $\mathbb R^n$, $n\geqslant3$, and an open ball.\footnote{It may even happen that $S(\mu^A)\cap A=\varnothing$. For instance, for any $\nu\in\mathfrak M^+(\mathbb R^n)$, $n\geqslant3$, and any open set $D\subset\mathbb R^n$ with connected complement $D^c$ of nonzero Newtonian capacity, we have $S(\nu^D)=\partial_{\mathbb R^n}D$, and hence $S(\nu^D)\cap D=\varnothing$, $\nu^D$ denoting the inner Newtonian balayage of $\nu$ to $D$. This follows e.g.\ from
\cite[Theorems~4.1, 5.1]{Z-bal2}.} The inclusion $\mu^A\in\mathcal E^+_A$, however, does already hold if $A$ is quasiclosed.

\begin{corollary}\label{cor-def1}
If\/ $A$ is quasiclosed\/ {\rm(}more generally, if\/ $\mathcal E^+_A$ is strongly closed\/{\rm)}, the inner balayage\/ $\mu^A$ can be found as the orthogonal projection of\/ $\mu$ onto\/ $\mathcal E^+_A$, i.e.
\[\mu^A=\mu_{\mathcal E^+_A},\]
or alternatively, as the only measure in\/ $\mathcal E^+_A$ having property\/ {\rm(\ref{eq-bala-f1})}.
\end{corollary}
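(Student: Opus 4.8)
The plan is to deduce Corollary~\ref{cor-def1} directly from Theorem~\ref{th-bal-ex} together with Lemma~\ref{l-quasi}, the point being that under the hypothesis $\mathcal E^+_A$ is strongly closed, so the strong closure $\mathcal E'_A$ coincides with $\mathcal E^+_A$ itself, and every statement of Theorem~\ref{th-bal-ex} phrased in terms of $\mathcal E'_A$ may simply be rewritten with $\mathcal E^+_A$. First I would record that if $A$ is quasiclosed, then by Lemma~\ref{l-quasi} the cone $\mathcal E^+_A$ is strongly closed, so the two hypotheses in the statement reduce to the single assumption that $\mathcal E^+_A$ is strongly closed; this disposes of the parenthetical generalization and lets me argue under that one assumption.

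Next I would invoke the identity $\mathcal E'_A=\mathcal E^+_A$, which under our assumption holds trivially since the strong closure of a strongly closed set is the set itself (this is exactly \eqref{eq-id0} in the quasiclosed case, but it holds verbatim whenever $\mathcal E^+_A$ is strongly closed). Feeding this into \eqref{proj''} of Theorem~\ref{th-bal-ex} gives
\[
\mu^A=\mu_{\mathcal E'_A}=\mu_{\mathcal E^+_A},
\]
where the last equality is the definition of the orthogonal projection onto $\mathcal E^+_A$, legitimate because $\mathcal E^+_A$ is a strongly closed convex subset of $\mathcal E^+$ (indeed a convex cone containing $0$), so Theorem~\ref{th-proj} applies and $\mu_{\mathcal E^+_A}$ exists and is unique. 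This establishes the first displayed formula of the corollary.

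For the characterization via \eqref{eq-bala-f1}, I would again transcribe the corresponding clause of Theorem~\ref{th-bal-ex}: there $\mu^A$ is the only measure in $\mathcal E'_A$ satisfying $\kappa\mu^A=\kappa\mu$ n.e.\ on $A$. Since $\mathcal E'_A=\mathcal E^+_A$ under our hypothesis, ``the only measure in $\mathcal E'_A$'' is literally ``the only measure in $\mathcal E^+_A$'', which is the assertion to be proved. Thus both alternative descriptions follow by substitution, and no new analysis is required beyond citing Lemma~\ref{l-quasi} to cover the quasiclosed case.

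The work here is essentially bookkeeping, and I do not anticipate a genuine obstacle: the entire content has been front-loaded into Theorem~\ref{th-bal-ex} (existence of the inner balayage and its identification with the projection onto $\mathcal E'_A$) and into Lemma~\ref{l-quasi} (strong closedness of $\mathcal E^+_A$ for quasiclosed $A$). The only point demanding the slightest care is making explicit that ``$\mathcal E^+_A$ strongly closed'' forces $\mathcal E'_A=\mathcal E^+_A$, so that the projection onto $\mathcal E'_A$ and the projection onto $\mathcal E^+_A$ are one and the same; once that observation is in place, the corollary is immediate.
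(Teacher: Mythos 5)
Your proposal is correct and follows essentially the same route as the paper: Lemma~\ref{l-quasi} gives $\mathcal E^+_A=\mathcal E'_A$ (trivially so whenever $\mathcal E^+_A$ is strongly closed), hence $\mu_{\mathcal E^+_A}=\mu_{\mathcal E'_A}$, and substituting this into Theorem~\ref{th-bal-ex} yields both assertions. Your explicit handling of the parenthetical ``more generally'' case is a minor but welcome clarification over the paper's one-line argument.
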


\begin{proof} For quasiclosed $A$, we have $\mathcal E^+_A=\mathcal E'_A$ (Lemma~\ref{l-quasi}), hence $\mu_{\mathcal E_A^+}=\mu_{\mathcal E_A'}$,
which substituted into
Theorem~\ref{th-bal-ex} yields the corollary.\end{proof}

Assume now that $\mu\in\mathcal E^+$ is bounded. Given $A\subset X$, denote by $\check{\mathcal E}^+_{A,\mu}$, resp.\ $\check{\Lambda}_{A,\mu}$, the (convex) class of all $\nu\in\mathcal E^+_A$, resp.\ $\nu\in\Lambda_{A,\mu}$, having the property
\begin{equation}\label{leq}\nu(X)\leqslant\mu(X),\end{equation}
and by $\check{\mathcal E}'_{A,\mu}$ the strong closure of $\check{\mathcal E}^+_{A,\mu}$.

\begin{theorem}\label{th-fr}Suppose that\/ $\mu\in\mathcal E^+$ is bounded while\/ $\kappa$ satisfies Frostman's maximum principle. Then Definition\/~{\rm\ref{def-bal}} and that obtained from it by replacing\/ $\Lambda_{A,\mu}$ by\/ $\check{\Lambda}_{A,\mu}$ lead to the same concept of inner balayage.
Furthermore, Theorem\/~{\rm\ref{th-bal-ex}} and Corollaries\/~{\rm\ref{cor-def}} and\/ {\rm\ref{cor-def1}} remain valid if\/ $\Lambda_{A,\mu}$, $\mathcal E^+_A$, and\/ $\mathcal E'_A$ are replaced throughout by\/ $\check{\Lambda}_{A,\mu}$, $\check{\mathcal E}^+_{A,\mu}$, and\/ $\check{\mathcal E}'_{A,\mu}$, respectively. Thus, under the stated assumptions,
the inner balayage\/ $\mu^A$ can equivalently be defined by either of the two formulae\/\footnote{The orthogonal projections involved in (\ref{eq-balu1}) and (\ref{eq-balu2}) do exist (Theorem~\ref{th-proj} and Lemma~\ref{l-quasi}).}
\begin{align}\kappa\mu^A&=\inf_{\nu\in\check{\Lambda}_{A,\mu}}\,\kappa\nu\text{ \ on\ }X,\label{eq-balu}\\
\mu^A&=\mu_{\check{\mathcal E}'_{A,\mu}},\label{eq-balu1}\end{align}
the infimum in\/ {\rm(\ref{eq-balu})} being an actual minimum. If\/ $A$ is quasiclosed, then also\/
\begin{equation}\label{eq-balu2}\mu^A=\mu_{\check{\mathcal E}^+_{A,\mu}}.\end{equation}
\end{theorem}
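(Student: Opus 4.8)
The plan is to reduce the entire statement to the single new fact that, under the present hypotheses, balayage does not increase total mass, namely
\[\mu^A(X)\leqslant\mu(X).\]
Once this is secured, every assertion follows by elementary manipulations, for $\check{\Lambda}_{A,\mu}\subset\Lambda_{A,\mu}$ and $\check{\mathcal E}^+_{A,\mu}\subset\mathcal E^+_A$ hold by definition. I would first prove the mass inequality for a compact $K\subset A$, and then pass to general $A$ through the exhaustion $K\uparrow A$: by Theorem~\ref{th-bal-ex} and the convergence relations of Theorem~\ref{th-pr} the net $(\mu^K)=(\mu_{\mathcal E_K^+})$ tends to $\mu^A$ strongly, hence (the kernel being perfect) also vaguely, so that $\mu^A(X)\leqslant\liminf_K\mu^K(X)$ by the vague lower semicontinuity of total mass (Lemma~\ref{lsc}).

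For compact $K$ the argument rests on the inner equilibrium measure $\gamma_K$, which exists because $c(K)<\infty$. Frostman's maximum principle yields $\kappa\gamma_K=1$ n.e.\ on $K$ (see~(\ref{eqq})) and $\kappa\gamma_K\leqslant1$ on all of $X$. As the bounded measure $\mu^K\in\mathcal E_K^+$ has finite energy, it does not charge the exceptional subset of $K$ of inner capacity zero (Lemma~\ref{l-negl}), so $\kappa\gamma_K=1$ holds $\mu^K$-a.e.; therefore, using the symmetry of $\kappa$ and Fubini's theorem,
\[\mu^K(X)=\int\kappa\gamma_K\,d\mu^K=\int\kappa\mu^K\,d\gamma_K\leqslant\int\kappa\mu\,d\gamma_K=\int\kappa\gamma_K\,d\mu\leqslant\mu(X),\]
the first inequality being (\ref{eq-bala-f2}) and the last one using $\kappa\gamma_K\leqslant1$ everywhere together with the boundedness of $\mu$. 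I expect this to be the main obstacle, since it is the only place where both the boundedness of $\mu$ and Frostman's principle are genuinely used.

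With $\mu^A(X)\leqslant\mu(X)$ at hand, the remaining work is formal. Because $\mu^A\in\Lambda_{A,\mu}$ (Corollary~\ref{cor-def}) and now satisfies the mass bound, we have $\mu^A\in\check{\Lambda}_{A,\mu}$; since $\check{\Lambda}_{A,\mu}\subset\Lambda_{A,\mu}$, the infima in (\ref{eq-bal}) and (\ref{eq-balu}) coincide and are both attained at $\mu^A$, which proves (\ref{eq-balu}) and the equivalence of the two definitions. Likewise each $\mu^K=\mu_{\mathcal E_K^+}$ lies in $\check{\mathcal E}^+_{A,\mu}$ (finite energy, mass $\leqslant\mu(X)$, concentrated on $K\subset A$) and converges strongly to $\mu^A$, whence $\mu^A\in\check{\mathcal E}'_{A,\mu}$. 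As $\check{\mathcal E}'_{A,\mu}\subset\mathcal E'_A$ and the projection $\mu_{\mathcal E'_A}=\mu^A$ already belongs to the smaller set, uniqueness of the orthogonal projection (Theorem~\ref{th-proj}) forces $\mu_{\check{\mathcal E}'_{A,\mu}}=\mu^A$, which is (\ref{eq-balu1}). The assertions that Theorem~\ref{th-bal-ex} and Corollaries~\ref{cor-def} and~\ref{cor-def1} persist under the indicated substitutions are then obtained by repeating their proofs verbatim with the checked classes in place of the original ones.

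Finally, for quasiclosed $A$ I would verify that $\check{\mathcal E}^+_{A,\mu}$ is itself strongly closed, so that $\check{\mathcal E}^+_{A,\mu}=\check{\mathcal E}'_{A,\mu}$ and (\ref{eq-balu2}) drops out of (\ref{eq-balu1}). This is precisely the truncation argument of Lemmas~\ref{l-quasi0} and~\ref{l-quasi}, which goes through unchanged with the mass bound $\nu(X)\leqslant\mu(X)$ replacing $\nu(X)\leqslant1$, the only ingredient being the vague lower semicontinuity of $\nu\mapsto\nu(X)$ (Lemma~\ref{lsc}).
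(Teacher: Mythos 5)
Your proposal is correct and follows essentially the same route as the paper: the entire theorem is reduced to the mass bound $\mu^A(X)\leqslant\mu(X)$, which the paper isolates as Proposition~\ref{cor-mass} and proves exactly by your equilibrium-measure/Fubini argument, after which the identification of the truncated infimum, the membership $\mu^A\in\check{\mathcal E}'_{A,\mu}\cap\check{\Lambda}_{A,\mu}$, the projection identity via $\check{\mathcal E}'_{A,\mu}\subset\mathcal E'_A$, and the quasiclosed case via the truncated version of Lemma~\ref{l-quasi} all proceed as in Sect.~\ref{sec-pr-fr}. The only cosmetic difference is that you use the inequality $\kappa\mu^K\leqslant\kappa\mu$ where the paper uses the equality $\kappa\mu^K=\kappa\mu$ $\gamma_K$-a.e., which is immaterial for the bound needed here.
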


\begin{remark}\label{weakened} Theorem~\ref{th-fr} still holds if (\ref{leq}) is weakened to $\nu(X)\leqslant q\mu(X)$, where $q\in[1,\infty)$ (cf.\ Sect.~\ref{sec-pr-fr}).
\end{remark}

\subsection{Convergence theorems for inner balayage} Theorem~\ref{th-bal-cont} below shows that both $\mu^A$ and $\kappa\mu^A$ are continuous under the exhaustion of $A$ by $K$ compact, which justifies the term `inner' balayage.

\begin{theorem}\label{th-bal-cont}For\/ $\mu\in\mathcal E^+$ and\/ $A\subset X$ arbitrary,
\begin{equation}\label{eq-bal-cont}\mu^K\to\mu^A\text{ \ strongly and vaguely in $\mathcal E^+$ as $K\uparrow A$},\end{equation}
where the abbreviation $K\uparrow A$ means that\/ $K$ increases along the upper directed family\/ $\mathfrak C=\mathfrak C_A$ of all compact subsets of\/ $A$. Furthermore,
\begin{equation}\label{later-bal}\kappa\mu^K\uparrow\kappa\mu^A\text{ \ pointwise on\/ $X$ as\/ $K\uparrow A$},\end{equation}
and hence
\begin{equation}\label{later-bal1}\kappa\mu^A=\sup_{K\in\mathfrak C_A}\,\kappa\mu^K\text{ \ on\ }X.\end{equation}
\end{theorem}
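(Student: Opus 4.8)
The plan is to analyze the net $(\mu^K)_{K\in\mathfrak C_A}$ through the pre-Hilbert geometry of $\mathcal E$, identify its strong (hence vague) limit with $\mu^A$, and then extract the pointwise statement for potentials from the principle of descent. I would begin by recording two monotonicity facts. For compact $K\subset K'\subset A$ one has $\Lambda_{K',\mu}\subset\Lambda_{K,\mu}$ (the n.e.\ constraint is stronger on the larger set), so by Definition~\ref{def-bal} and Corollary~\ref{cor-def} the potentials increase, $\kappa\mu^K\leqslant\kappa\mu^{K'}$; moreover $\mu^A\in\Lambda_{K,\mu}$ since $\kappa\mu^A=\kappa\mu$ n.e.\ on $A$ by (\ref{eq-bala-f1}), whence $\kappa\mu^K\leqslant\kappa\mu^A$ for every $K$. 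On the metric side $\mathcal E^+_K\subset\mathcal E^+_{K'}$, so the distances $\|\mu-\mu^K\|=\varrho(\mu,\mathcal E^+_K)$ decrease and converge to some $d\geqslant\varrho(\mu,\mathcal E'_A)=\|\mu-\mu^A\|$.

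Next I would show that $(\mu^K)$ is strongly Cauchy. Recalling from Theorem~\ref{th-bal-ex} that $\mu^K=\mu_{\mathcal E^+_K}$ is the orthogonal projection, I apply Lemma~\ref{4.1.1} to the convex set $\Gamma=\mu-\mathcal E^+_{K'}$, whose element of least norm is $\mu-\mu^{K'}$; taking $\nu=\mu-\mu^K\in\Gamma$ (legitimate as $\mu^K\in\mathcal E^+_K\subset\mathcal E^+_{K'}$) yields
\[\|\mu^{K'}-\mu^K\|^2\leqslant\|\mu-\mu^K\|^2-\|\mu-\mu^{K'}\|^2\qquad(K\subset K').\]
Since $\|\mu-\mu^K\|^2$ is a convergent decreasing net, the right-hand side tends to $0$, so $(\mu^K)$ is strongly Cauchy. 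By perfectness of $\kappa$ it converges both strongly and vaguely to a unique $\mu_0\in\mathcal E^+$; as each $\mu^K\in\mathcal E'_A$ and $\mathcal E'_A$ is strongly closed, $\mu_0\in\mathcal E'_A$ and $\|\mu-\mu_0\|=d$.

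The crux is to identify $\mu_0$ with $\mu^A=\mu_{\mathcal E'_A}$, which by uniqueness of the projection amounts to showing $d=\|\mu-\mu^A\|$; only the inequality $d\leqslant\|\mu-\mu^A\|$ remains. For this I would prove the density of $\bigcup_{K\in\mathfrak C_A}\mathcal E^+_K$ in $\mathcal E'_A$, i.e.\ that each $\nu\in\mathcal E^+_A$ is the strong limit of its traces $\nu|_K$ over compact $K\subset A$. The increasing net $(\nu|_K)$ satisfies $\|\nu|_{K'}-\nu|_K\|^2\leqslant\|\nu|_{K'}\|^2-\|\nu|_K\|^2$ with $\|\nu|_K\|^2$ increasing and bounded by $\|\nu\|^2$, hence is strongly Cauchy; by perfectness it converges strongly to its vague limit, which is $\nu$ because $\nu$ is concentrated on $A$ and thus $\nu(K)\uparrow\nu_*(A)=\nu(X)$ by inner regularity. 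Granting this, for any $\nu\in\mathcal E^+_A$ we get $\|\mu-\mu^K\|\leqslant\|\mu-\nu|_K\|\to\|\mu-\nu\|$, so $d\leqslant\|\mu-\nu\|$; taking the infimum over $\mathcal E^+_A$ and using Theorem~\ref{th-bal-ex} gives $d\leqslant\|\mu-\mu^A\|$. This establishes (\ref{eq-bal-cont}).

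Finally, for (\ref{later-bal}) I combine the two halves. The net is increasing with $\kappa\mu^K\leqslant\kappa\mu^A$, so $h:=\sup_K\kappa\mu^K\leqslant\kappa\mu^A$ pointwise. Conversely, since $\mu^K\to\mu^A$ vaguely, the principle of descent (cf.\ Lemma~\ref{lemma-semi}) gives $\kappa\mu^A(x)\leqslant\liminf_K\kappa\mu^K(x)=h(x)$ for every $x\in X$. Hence $h=\kappa\mu^A$, which is (\ref{later-bal}), and (\ref{later-bal1}) is merely its restatement, the supremum being attained in the limit because the net is increasing. I expect the main obstacle to be the density/trace-approximation step of the third paragraph: it is precisely what upgrades the compact-exhaustion bound to the exact value $\|\mu-\mu^A\|$ and thereby pins the limit $\mu_0$ down to $\mu^A$.
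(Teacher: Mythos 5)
Your proof is correct, and its first half follows the paper's own route: the strong Cauchy property of $(\mu^K)_{K\in\mathfrak C_A}$ obtained from Lemma~\ref{4.1.1}, and the identification of the limit with $\mu_{\mathcal E'_A}=\mu^A$ by approximating each $\nu\in\mathcal E^+_A$ by its traces $\nu|_K$, are exactly the content of (\ref{eq-cont'}) in Theorem~\ref{th-pr}, transferred to balayage through (\ref{proj''}). Two small remarks on that part: your justification that $\nu|_K\to\nu$ vaguely via $\nu(K)\uparrow\nu(X)$ should be localized to the compact support of a test function $f\in C_0^+$, since $\nu\in\mathcal E^+_A$ need not be bounded (this is \cite[Lemma~1.2.2]{F1}, which the paper cites); and your direct proof that $\nu|_K\to\nu$ strongly is a mild strengthening of the paper's computation of $\lim_K\|\mu-\nu|_K\|$ via the principle of descent, serving the same purpose. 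Where you genuinely depart from the paper is in the proof of (\ref{later-bal}). The paper must prove the corresponding projection statement (\ref{later-ar}) before the variational characterization of $\mu^A$ is available: it derives monotonicity of $(\kappa\mu_{\mathcal E_K^+})$ from a domination-principle argument, extracts a strongly convergent subsequence, invokes \cite[Theorem~3.6]{Fu5} to get q.e.\ convergence of the potentials, and then upgrades q.e.\ to everywhere by the domination principle again. You instead use Theorem~\ref{th-bal-ex} and Corollary~\ref{cor-def} (legitimately, since they precede the statement) to obtain $\kappa\mu^K\leqslant\kappa\mu^{K'}\leqslant\kappa\mu^A$ everywhere on $X$ from the inclusions $\mu^{K'}\in\Lambda_{K',\mu}\subset\Lambda_{K,\mu}\ni\mu^A$ and the minimality in (\ref{eq-bal}), after which a single application of the principle of descent to the vague convergence $\mu^K\to\mu^A$ forces equality of the supremum with $\kappa\mu^A$. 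This is shorter and avoids both \cite[Theorem~3.6]{Fu5} and the domination principle in that step; the reason the paper takes the longer road is that (\ref{later-ar}) is itself an ingredient in the proof of Theorem~\ref{th-bal-ex}, so it has to be established at the projection level first, without access to (\ref{eq-bala-f1}).
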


\begin{theorem}\label{th-cont-bor1}Suppose that\/ $A$ is the union of an increasing sequence\/ $(A_k)$ of universally measurable subsets of\/ $X$. For any\/ $\mu\in\mathcal E^+$, then
\begin{align}\label{eq-cont-bor}&\mu^{A_k}\to\mu^A\text{ \ strongly and vaguely in $\mathcal E^+$ as $k\to\infty$},\\
&\kappa\mu^{A_k}\uparrow\kappa\mu^A\text{ \ pointwise on\/ $X$ as\/ $k\to\infty$}.\label{later-bor}
\end{align}
\end{theorem}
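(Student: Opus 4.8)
The plan is to realize both $\mu^{A_k}$ and $\mu^A$ as orthogonal projections in $\mathcal E$ via Theorem~\ref{th-bal-ex} and to exploit that the associated convex cones increase to the correct limit. First I would record the monotonicity. Since $A_k\subset A_{k+1}\subset A$, any measure concentrated on $A_k$ is concentrated on $A_{k+1}$ and on $A$, so $\mathcal E^+_{A_k}\subset\mathcal E^+_{A_{k+1}}\subset\mathcal E^+_A$; passing to strong closures, $\mathcal E'_{A_k}\subset\mathcal E'_{A_{k+1}}\subset\mathcal E'_A$. By Theorem~\ref{th-bal-ex} we have $\mu^{A_k}=\mu_{\mathcal E'_{A_k}}$ and $\mu^A=\mu_{\mathcal E'_A}$.

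The crucial step, and the one I expect to be the main obstacle, is to prove that the strong closure of $\bigcup_k\mathcal E'_{A_k}$ equals $\mathcal E'_A$. As $\mathcal E^+_A$ is strongly dense in $\mathcal E'_A$, it is enough to approximate an arbitrary $\nu\in\mathcal E^+_A$ strongly by members of $\bigcup_k\mathcal E^+_{A_k}$. Here the hypothesis of universal measurability enters decisively: each $A_k$ is $\nu$-measurable, so the trace $\nu|_{A_k}=1_{A_k}\cdot\nu$ is well defined, and it lies in $\mathcal E^+_{A_k}$ because $\nu|_{A_k}\leqslant\nu$ and the kernel is positive. Writing $\sigma_k:=\nu-\nu|_{A_k}=1_{A\setminus A_k}\cdot\nu$ (using $\nu=\nu|_A$), one has
\[\|\sigma_k\|^2=\int\!\!\int\kappa(x,y)\,1_{A\setminus A_k}(x)\,1_{A\setminus A_k}(y)\,d\nu(x)\,d\nu(y).\]
Since $A\setminus A_k\downarrow\varnothing$ and $\kappa\in L^1(\nu\otimes\nu)$ by finiteness of the energy, dominated convergence gives $\|\sigma_k\|\to0$, that is $\nu|_{A_k}\to\nu$ strongly. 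Hence $\mathcal E^+_A$, and thus $\mathcal E'_A$, is contained in the strong closure of $\bigcup_k\mathcal E^+_{A_k}$; the reverse inclusion being clear, the identity follows.

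I would then obtain \eqref{eq-cont-bor} by a standard projection argument. Put $d_k:=\|\mu-\mu^{A_k}\|$; by the nesting of the cones the sequence $(d_k)$ decreases. Applying Lemma~\ref{4.1.1} to the convex set $\mu-\mathcal E'_{A_j}$, whose minimal-norm element is $\mu-\mu^{A_j}$, with the competitor $\mu-\mu^{A_k}$ (which belongs to it for $j\geqslant k$) yields
\[\|\mu^{A_j}-\mu^{A_k}\|^2\leqslant d_k^2-d_j^2.\]
As $(d_k)$ converges, $(\mu^{A_k})$ is strong Cauchy and, the kernel being perfect, converges strongly and vaguely to a unique $\theta\in\mathcal E'_A$. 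The density established above forces $\|\mu-\theta\|=\lim_k d_k=\varrho(\mu,\mathcal E'_A)$, so $\theta=\mu_{\mathcal E'_A}=\mu^A$ by uniqueness of the projection.

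Finally, for \eqref{later-bor} I would first note potential monotonicity: $A_k\subset A_{k+1}\subset A$ gives $\Lambda_{A_{k+1},\mu}\subset\Lambda_{A_k,\mu}$ and $\Lambda_{A,\mu}\subset\Lambda_{A_k,\mu}$, whence Definition~\ref{def-bal} (and Corollary~\ref{cor-def}) yields $\kappa\mu^{A_k}\leqslant\kappa\mu^{A_{k+1}}\leqslant\kappa\mu^A$ on $X$. Thus $\kappa\mu^{A_k}\uparrow u\leqslant\kappa\mu^A$ pointwise for some $u$. On the other hand, $\mu^{A_k}\to\mu^A$ vaguely together with the principle of descent (joint lower semicontinuity of $(\nu,x)\mapsto\kappa\nu(x)$) gives, for each fixed $x$, $\kappa\mu^A(x)\leqslant\liminf_k\kappa\mu^{A_k}(x)=u(x)$. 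Combining the two inequalities forces $u=\kappa\mu^A$ on $X$, which is exactly \eqref{later-bor}.
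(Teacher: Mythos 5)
Your proposal is correct and follows the same overall skeleton as the paper's proof: nested cones $\mathcal E'_{A_k}\subset\mathcal E'_A$, the Cauchy estimate from Lemma~\ref{4.1.1} applied to $\mu-\mathcal E'_{A_j}$, identification of the strong limit as the projection $\mu_{\mathcal E'_A}=\mu^A$ via a density argument for the traces $\nu|_{A_k}$, and finally monotonicity of the potentials plus the principle of descent for \eqref{later-bor}. The one place where you genuinely diverge is the density step. The paper only establishes the vague convergence $\nu|_{A_k}\to\nu$ (via \cite[Section~IV.1, Theorem~3]{B2} applied to $1_{A_k}f$, $f\in C_0^+$) and then combines the principle of descent with the positivity of the kernel to get $\|\mu-\nu|_{A_k}\|\to\|\mu-\nu\|$, which suffices for the distance computation $\lim_k\varrho(\mu,\mathcal E^+_{A_k})\leqslant\varrho(\mu,\mathcal E^+_A)$. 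You instead prove the stronger statement that $\nu|_{A_k}\to\nu$ in energy norm, by writing $\|\nu-\nu|_{A_k}\|^2$ as the integral of $\kappa$ over $(A\setminus A_k)\times(A\setminus A_k)$ against $\nu\otimes\nu$ and invoking dominated convergence (the integrand is dominated by $\kappa\in L^1(\nu\otimes\nu)$ and tends to zero pointwise since $A\setminus A_k\downarrow\varnothing$). This is valid here — universal measurability of the $A_k$ makes the traces and the product-set restriction legitimate, and $\kappa$ is $(\nu\otimes\nu)$-measurable being l.s.c. — and it yields a slightly cleaner conclusion (strong density of $\bigcup_k\mathcal E^+_{A_k}$ in $\mathcal E'_A$), at the cost of leaning on the identification of the energy as an honest double integral; the paper's route through vague convergence and descent is more in the spirit of its general framework but proves only what is needed. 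Both arguments close the same gap, so I would accept yours as a correct alternative treatment of the key step.
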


\begin{theorem}\label{th-cont-bor2}Let\/ $A$ be the intersection of a decreasing net\/ {\rm(}resp.\ a decreasing sequence\/{\rm)} $(A_t)_{t\in T}$ of closed\/ {\rm(}resp.\ quasiclosed\/{\rm)} sets. Given\/ $\mu\in\mathcal E^+$, the following two limit relations hold as\/ $t$ increases along\/ $T$:
\begin{align}\label{eq-cl}&\mu^{A_t}\to\mu^A\text{ \ strongly and vaguely in $\mathcal E^+$},\\
&\kappa\mu^{A_t}\downarrow\kappa\mu^A\text{ \ pointwise q.e.\ on\/ $X$}.\label{later-cl}
\end{align}
\end{theorem}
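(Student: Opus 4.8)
The plan is to exploit the monotonicity of inner balayage under set inclusion together with the Hilbert-space geometry of $\mathcal E$, and then to upgrade the resulting strong convergence to the pointwise (q.e.)\ statement by a Borel--Cantelli argument based on a capacitary Chebyshev estimate. First I would record the monotonicity: if $A\subset B$ then $\Lambda_{B,\mu}\subset\Lambda_{A,\mu}$, whence by Definition~\ref{def-bal} $\kappa\mu^A\leqslant\kappa\mu^B$ on $X$; and since $\mathcal E'_A\subset\mathcal E'_B$, the projection relations (\ref{proj1})--(\ref{proj2}) of Theorem~\ref{th-proj} give $\|\mu^A\|\leqslant\|\mu^B\|$ and $\|\mu-\mu^A\|\geqslant\|\mu-\mu^B\|$. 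Applying this to the decreasing family $(A_t)$, with $A\subset A_t$ for every $t$, the potentials $\kappa\mu^{A_t}$ decrease and stay $\geqslant\kappa\mu^A$, while the energies $\|\mu^{A_t}\|$ decrease and hence converge. Writing $\lambda_t:=\mu^{A_t}$, for $t\leqslant s$ I would combine (\ref{proj1}) for $\lambda_t$ (evaluated at $\lambda_s\in\mathcal E'_{A_s}\subset\mathcal E'_{A_t}$) with (\ref{proj2}) for $\lambda_s$ to get $(\lambda_t,\lambda_s)\geqslant\|\lambda_s\|^2$, and therefore $\|\lambda_t-\lambda_s\|^2\leqslant\|\lambda_t\|^2-\|\lambda_s\|^2$. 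Since $(\|\lambda_t\|)$ converges, $(\lambda_t)$ is a strong Cauchy net; by the perfectness of $\kappa$ it converges both strongly and vaguely to a unique $\lambda\in\mathcal E^+$.

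Next I would identify $\lambda=\mu^A$. By Corollary~\ref{cor-def1} each $\lambda_t\in\mathcal E^+_{A_t}\subset\mathfrak M^+_{A_t}$, and $\mathfrak M^+_{A_t}$ is vaguely closed (trivially for $A_t$ closed, and by Lemma~\ref{l-quasi0} for $A_t$ quasiclosed); hence the vague limit satisfies $\lambda\in\mathfrak M^+_{A_t}$ for every $t$. In the net case this means $S(\lambda)\subset A_t$ for all $t$, so $S(\lambda)\subset\bigcap_tA_t=A$; in the sequence case $\lambda$ is concentrated on every $A_k$, so $A^c=\bigcup_kA_k^c$ is a countable union of locally $\lambda$-negligible sets, hence locally $\lambda$-negligible, and $\lambda$ is concentrated on the ($\lambda$-measurable) set $A$. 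Either way $\lambda\in\mathcal E^+_A\subset\mathcal E'_A$. Finally, for every $\nu\in\mathcal E'_A\subset\mathcal E'_{A_t}$ the minimizing property of $\lambda_t$ gives $\|\mu-\lambda_t\|\leqslant\|\mu-\nu\|$; letting $t$ increase yields $\|\mu-\lambda\|\leqslant\|\mu-\nu\|$, so $\lambda$ realizes the distance of $\mu$ to $\mathcal E'_A$ and lies in $\mathcal E'_A$. By uniqueness of the projection (Theorem~\ref{th-proj}) and (\ref{proj''}) this forces $\lambda=\mu_{\mathcal E'_A}=\mu^A$, which is (\ref{eq-cl}).

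For (\ref{later-cl}) set $w:=\inf_t\kappa\mu^{A_t}=\lim_t\kappa\mu^{A_t}$; monotonicity gives $w\geqslant\kappa\mu^A$ on $X$, so only $w\leqslant\kappa\mu^A$ q.e.\ remains. Here I would not argue along the whole (possibly uncountable) net, but extract from the strong Cauchy net an increasing sequence $(t_j)$ with $\|\mu^{A_{t_j}}-\mu^A\|\leqslant2^{-j}$, which exists by directedness of $T$. Applying the standard capacitary estimate $c^*(\{|\kappa\sigma|\geqslant a\})\leqslant a^{-2}\|\sigma\|^2$ (valid for $\sigma\in\mathcal E$ in Fuglede's framework) to $\sigma=\mu^{A_{t_j}}-\mu^A$, together with the countable subadditivity of $c^*$ and the Borel--Cantelli lemma, I would obtain $\kappa\mu^{A_{t_j}}\to\kappa\mu^A$ q.e. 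Since $(\kappa\mu^{A_{t_j}})$ is a decreasing subsequence of the net, $w\leqslant\inf_j\kappa\mu^{A_{t_j}}=\kappa\mu^A$ q.e., and combined with $w\geqslant\kappa\mu^A$ this gives $w=\kappa\mu^A$ q.e.

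The main obstacle is precisely this last upgrade from ``nearly everywhere'' to ``quasi-everywhere'': the information coming from strong convergence (e.g.\ $\int(w-\kappa\mu^A)\,d\sigma=0$ for all $\sigma\in\mathcal E^+$) only yields an exceptional set of \emph{inner} capacity zero, and for a general space $X$ (no metrizability or $K_\sigma$ assumption, hence no Choquet capacitability of Borel sets at hand) one cannot simply convert this into \emph{outer} capacity zero. Routing the argument through a single increasing sequence extracted from the net, to which the capacitary estimate and Borel--Cantelli apply, is what delivers outer capacity zero while simultaneously covering both the net and the sequence cases. I expect the points requiring the most care to be the measurability and concentration details in the quasiclosed case, and the precise availability of the capacitary (Chebyshev-type) estimate with $c^*$ in the present general setting.
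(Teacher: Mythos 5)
Your argument follows essentially the same route as the paper's: monotonicity of the cones $\mathcal E'_{A_t}$ together with the Hilbert-space geometry of the orthogonal projections yields a strong Cauchy net; the closedness of the cones (vague closedness of $\mathfrak M^+_{A_t}$ in your version, strong closedness of $\mathcal E^+_{A_t}$ in the paper's) identifies the limit as an element of $\mathcal E'_A$ realizing $\varrho(\mu,\mathcal E'_A)$, hence as $\mu_{\mathcal E'_A}=\mu^A$ via (\ref{proj''}); and the two cases (closed net vs.\ quasiclosed sequence) are separated exactly as you separate them, the countable-union-of-locally-negligible-sets argument included. The q.e.\ relation (\ref{later-cl}) is likewise obtained in the paper by extracting a strongly convergent subsequence from the net. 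The only substantive divergence is how the q.e.\ convergence of potentials along that subsequence is justified: the paper simply invokes \cite[Theorem~3.6]{Fu5}, which states precisely that strong convergence of a sequence in $\mathcal E^+$ entails pointwise q.e.\ convergence of the potentials, whereas you propose to re-derive this from a Chebyshev bound $c^*(\{|\kappa\sigma|\geqslant a\})\leqslant a^{-2}\|\sigma\|^2$ for signed $\sigma\in\mathcal E$. Your own caution there is warranted: for signed $\sigma$ the set $\{|\kappa\sigma|\geqslant a\}$ is a level set of a difference of two l.s.c.\ functions, hence not open, so the passage from inner to outer capacity is not automatic, and the inequality in the exact form you state does not follow from the positive-measure case by splitting $\sigma=\sigma^+-\sigma^-$ (the cross term has the wrong sign). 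This is not a fatal gap, since the conclusion you need is exactly the cited Fuglede theorem and can be quoted; but as written, that self-contained substitute is the one step that would not survive scrutiny.
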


In view of the equality $\mu^A=\mu_{\mathcal E_A'}$ (see Eq.~(\ref{proj''})), the proofs of Theorems~\ref{th-bal-cont}--\ref{th-cont-bor2} (Sects.~\ref{sec-pr1}, \ref{sec-pr-bor}, \ref{sec-pr4}) are mainly based on a careful analysis of the orthogonal projection $\mu_{\mathcal E_A'}$, provided by Theorem~\ref{th-pr} below, as well as on some further properties of inner swept measures and their potentials, presented in Sect.~\ref{sec-further}.

\section{On the orthogonal projection of $\mu\in\mathcal E^+$ onto $\mathcal E_A'$}

\begin{theorem}\label{th-pr}Given\/ $\mu\in\mathcal E^+$ and\/ $A\subset X$,
\begin{align}
\label{eq-pr-1}\kappa\mu_{\mathcal E_A'}&=\kappa\mu\text{ \ n.e.\ on\ }A,\\
\label{eq-pr-0}\kappa\mu_{\mathcal E_A'}&=\kappa\mu\text{ \ $\mu_{\mathcal E_A'}$-a.e.,}\\
\kappa\mu_{\mathcal E_A'}&\leqslant\kappa\mu\text{ \ on\ }X.\label{eq-pr-2}
\end{align}
Furthermore,
\begin{align}\label{eq-cont'}&\mu_{\mathcal E_K^+}\to\mu_{\mathcal E_A'}\text{ \ strongly and vaguely in $\mathcal E^+$ as $K\uparrow A$},\\
\label{later-ar}&\kappa\mu_{\mathcal E_K^+}\uparrow\kappa\mu_{\mathcal E_A'}\text{ \ pointwise on\/ $X$ as\/ $K\uparrow A$}.\end{align}
\end{theorem}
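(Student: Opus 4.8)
The plan is to prove everything first for compact $A=K$ and then to pass to the limit $K\uparrow A$, the relation $\mu^A=\mu_{\mathcal E_A'}$ not being needed here since the whole statement is phrased in terms of the projection. For $K$ compact, $\mathcal E_K^+$ is strongly closed (Lemma~\ref{l-quasi}), so $\mu_{\mathcal E_K^+}$ exists and, since $\mathcal E_K^+$ is a convex cone containing $0$, it is characterized by (\ref{proj1}) and (\ref{proj2}). Reading (\ref{proj1}) as $\int(\kappa\mu-\kappa\mu_{\mathcal E_K^+})\,d\nu\leqslant0$ for all $\nu\in\mathcal E_K^+$ and testing against nonzero measures carried by compact subsets of $K$ of positive capacity (which exist by (\ref{2.3.1})), I would deduce $\kappa\mu_{\mathcal E_K^+}\geqslant\kappa\mu$ n.e.\ on $K$; as $\mu_{\mathcal E_K^+}$ is carried by $K$ and of finite energy it ignores sets of inner capacity zero (Lemma~\ref{l-negl}), so this holds $\mu_{\mathcal E_K^+}$-a.e., and with (\ref{proj2}) it gives $\kappa\mu_{\mathcal E_K^+}=\kappa\mu$ $\mu_{\mathcal E_K^+}$-a.e. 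The domination principle then upgrades the latter to $\kappa\mu_{\mathcal E_K^+}\leqslant\kappa\mu$ on $X$, and combined with the n.e.\ lower bound yields $\kappa\mu_{\mathcal E_K^+}=\kappa\mu$ n.e.\ on $K$. This is exactly the Gauss variational method in the present setting, and it applies verbatim to any $\mu\in\mathcal E^+$.

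Next I would establish the convergence (\ref{eq-cont'}), the crux of the passage to arbitrary $A$. The key preliminary is that $\mathcal E_A'$ is the strong closure of $\bigcup_{K\in\mathfrak C_A}\mathcal E_K^+$: for $\nu\in\mathcal E_A^+$ the traces $\nu|_K$ lie in $\mathcal E_K^+$, and $\|\nu-\nu|_K\|^2=\|\nu|_{A\setminus K}\|^2\to0$ by dominated convergence, $\kappa$ being $\nu\otimes\nu$-integrable and $\nu(A\setminus K)\to0$ by inner regularity. Hence $\varrho(\mu,\mathcal E_A')=\inf_{K}\varrho(\mu,\mathcal E_K^+)$, so that $a_K:=\|\mu-\mu_{\mathcal E_K^+}\|^2$ decreases to $\varrho(\mu,\mathcal E_A')^2$ (using $\mathcal E_{K_1}^+\subset\mathcal E_{K_2}^+$ for $K_1\subset K_2$). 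Applying the parallelogram identity to $\mu_{\mathcal E_{K_1}^+},\mu_{\mathcal E_{K_2}^+}\in\mathcal E_{K_2}^+$, whose midpoint again lies in $\mathcal E_{K_2}^+$, gives $\|\mu_{\mathcal E_{K_1}^+}-\mu_{\mathcal E_{K_2}^+}\|^2\leqslant2(a_{K_1}-a_{K_2})$ for $K_1\subset K_2$, whence $(\mu_{\mathcal E_K^+})$ is strongly Cauchy (comparing any $K_1,K_2$ with $K_1\cup K_2$). By perfectness it converges strongly and vaguely to some $\theta\in\mathcal E^+$; since $\theta\in\mathcal E_A'$ and $\|\mu-\theta\|=\varrho(\mu,\mathcal E_A')$, uniqueness of the projection (Theorem~\ref{th-proj}) identifies $\theta=\mu_{\mathcal E_A'}$, which is (\ref{eq-cont'}).

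The three pointwise relations now follow. By the principle of descent, $\kappa\mu_{\mathcal E_A'}(x)\leqslant\liminf_K\kappa\mu_{\mathcal E_K^+}(x)$, and since $\kappa\mu_{\mathcal E_K^+}\leqslant\kappa\mu$ from the compact case, this gives (\ref{eq-pr-2}). Relation (\ref{proj2}) reads $\int(\kappa\mu-\kappa\mu_{\mathcal E_A'})\,d\mu_{\mathcal E_A'}=0$; the integrand being $\geqslant0$ by (\ref{eq-pr-2}), it vanishes $\mu_{\mathcal E_A'}$-a.e., which is (\ref{eq-pr-0}). Finally, (\ref{proj1}) applied to measures $\nu\in\mathcal E_A^+$ carried by compact subsets of $A$ of positive capacity yields, exactly as in the compact case, $\kappa\mu_{\mathcal E_A'}\geqslant\kappa\mu$ n.e.\ on $A$; together with (\ref{eq-pr-2}) this is (\ref{eq-pr-1}).

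It remains to prove that the convergence of potentials in (\ref{later-ar}) is monotone. For $K_1\subset K_2$ I would argue that $\mu_{\mathcal E_{K_1}^+}$ is the projection of $\mu_{\mathcal E_{K_2}^+}$ onto $\mathcal E_{K_1}^+$: both measures lie in $\mathcal E_{K_1}^+$ and, by the first paragraph (applied to $\mu_{\mathcal E_{K_2}^+}$) together with $\kappa\mu_{\mathcal E_{K_2}^+}=\kappa\mu$ n.e.\ on $K_2\supset K_1$, both have potential equal to $\kappa\mu$ n.e.\ on $K_1$; such a measure is unique by the energy principle (cf.\ the proof of Lemma~\ref{l-unique}). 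Hence $\kappa\mu_{\mathcal E_{K_1}^+}\leqslant\kappa\mu_{\mathcal E_{K_2}^+}$. The same reasoning with $\mu_{\mathcal E_K^+}$ and the projection of $\mu_{\mathcal E_A'}$ onto $\mathcal E_K^+$, now invoking (\ref{eq-pr-1}), shows $\kappa\mu_{\mathcal E_K^+}\leqslant\kappa\mu_{\mathcal E_A'}$ for every $K$; since the pointwise supremum is $\geqslant\kappa\mu_{\mathcal E_A'}$ by the principle of descent, we get $\kappa\mu_{\mathcal E_K^+}\uparrow\kappa\mu_{\mathcal E_A'}$, i.e.\ (\ref{later-ar}). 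I expect the main difficulties to lie exactly where the inner nature of $A$ enters: exhausting $\mathcal E_A'$ strongly by the cones $\mathcal E_K^+$ so that the Cauchy argument pins down the limit, and the fact that $\mu_{\mathcal E_A'}$ is carried only by $\overline A$, which is precisely why (\ref{eq-pr-0}) must be read off from (\ref{proj2}) and (\ref{eq-pr-2}) rather than directly from the n.e.-equality on $A$.
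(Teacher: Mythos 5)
Your argument is correct and shares the paper's overall architecture: the compact case via the variational characterization (\ref{proj1})--(\ref{proj2}) of the projection, a strong Cauchy argument along $K\uparrow A$ with the limit identified by showing $\inf_{K}\varrho(\mu,\mathcal E_K^+)=\varrho(\mu,\mathcal E_A')$, and then the three pointwise relations. Two steps genuinely diverge. First, to approximate $\nu\in\mathcal E_A^+$ by its traces you prove actual strong convergence $\|\nu-\nu|_K\|\to0$, via absolute continuity of the integral of the $\nu\otimes\nu$-integrable kernel over $(A\setminus K)\times(A\setminus K)$ (note that ``dominated convergence'' is the wrong name here, since $K\uparrow A$ is a net rather than a sequence; absolute continuity of the integral is what carries the argument). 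The paper proves only the weaker fact $\|\mu-\nu|_K\|\to\|\mu-\nu\|$, from vague convergence of the traces plus the principle of descent and the positivity of the kernel; either version yields the needed infimum identity, and your parallelogram inequality is precisely Lemma~\ref{4.1.1}. Second, and more substantively, your proof of (\ref{later-ar}) bypasses the paper's route, which introduces the monotone limit $\psi$ of $(\kappa\mu_{\mathcal E_K^+})$, identifies $\psi=\kappa\mu_{\mathcal E_A'}$ q.e.\ by extracting a strongly convergent subsequence and invoking Fuglede's theorem that strong convergence entails q.e.\ convergence of potentials along a subsequence, and then upgrades to an identity everywhere by the domination principle. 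You instead use transitivity of projections: $\mu_{\mathcal E_{K_1}^+}$ coincides with the projection of $\mu_{\mathcal E_{K_2}^+}$ (resp.\ of $\mu_{\mathcal E_A'}$) onto $\mathcal E_{K_1}^+$, by the uniqueness of a measure in $\mathcal E_{K_1}^+$ whose potential equals $\kappa\mu$ n.e.\ on $K_1$, whence both the monotonicity and the upper bound $\kappa\mu_{\mathcal E_K^+}\leqslant\kappa\mu_{\mathcal E_A'}$ follow from the compact case of (\ref{eq-pr-2}), and descent pins down the supremum. This is self-contained and avoids the external convergence theorem, at the price of relying on the uniqueness characterization; when you invoke it, do spell out (as in Remark~\ref{rem-char}) that the exceptional set is Borel and of inner capacity zero, hence negligible for both bounded measures concentrated on $K_1$ by Lemma~\ref{l-negl}, and that the two n.e.\ exceptional sets combine by the subadditivity of inner capacity on universally measurable sets.
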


\begin{proof} For any $K,K'\in\mathfrak C=\mathfrak C_A$ such that $K\subset K'$, $\mu_{\mathcal E_K^+}\in\mathcal E_K^+\subset\mathcal E_{K'}^+\subset\mathcal E_A^+$\,(\footnote{These two inclusions follow from the fact that any subset of a locally negligible set is likewise locally negligible \cite[Section~IV.5.2]{B2}, which will often be used in the sequel.\label{f-negl}}) and
\[\min_{\nu\in\mathcal E_{K'}^+}\,\|\mu-\nu\|=\|\mu-\mu_{\mathcal E_{K'}^+}\|\leqslant\|\mu-\mu_{\mathcal E_K^+}\|,\]
by the definition of orthogonal projection. Lemma~\ref{4.1.1} with $\Gamma:=\{\mu-\nu:\ \nu\in\mathcal E^+_{K'}\}$ and $\lambda:=\mu-\mu_{\mathcal E_{K'}^+}$ therefore yields
\[\|\mu_{\mathcal E_K^+}-\mu_{\mathcal E_{K'}^+}\|^2=\|(\mu-\mu_{\mathcal E_K^+})-(\mu-\mu_{\mathcal E_{K'}^+})\|^2\leqslant\|\mu-\mu_{\mathcal E_K^+}\|^2-\|\mu-\mu_{\mathcal E_{K'}^+}\|^2.\]
Being decreasing and lower bounded, the net $(\|\mu-\mu_{\mathcal E_K^+}\|)_{K\in\mathfrak C}$ is Cauchy in $\mathbb R$, which together with the preceding display implies that  $(\mu_{\mathcal E_K^+})_{K\in\mathfrak C}$ is strong Cauchy in $\mathcal E^+$. In view of the perfectness of the kernel, there is a unique $\mu_0\in\mathcal E^+$ such that
\[\mu_{\mathcal E_K^+}\to\mu_0\text{ \ strongly and vaguely as $K\uparrow A$},\]
and hence $\mu_0\in\mathcal E_A'$. To prove (\ref{eq-cont'}), it is thus enough to verify the equality
\begin{equation}\label{eq-f-cont'''}\|\mu-\mu_0\|=\varrho(\mu,\mathcal E_A')\quad\bigl({}:=\inf_{\nu\in\mathcal E_A'}\,\|\mu-\nu\|\bigr).\end{equation}

It follows from the above that
\begin{equation}\label{stream}\varrho(\mu,\mathcal E_A^+)=\varrho(\mu,\mathcal E_A')\leqslant\|\mu-\mu_0\|=\lim_{K\uparrow A}\,\|\mu-\mu_{\mathcal E_K^+}\|=\lim_{K\uparrow A}\,\varrho(\mu,\mathcal E_K^+),\end{equation}
the first equality being evident by definition. On the other hand,
for every $\nu\in\mathcal E^+_A$, $A$ is $\nu$-meas\-ur\-able and $\nu=\nu|_A$. Applying \cite[Lemma~1.2.2]{F1} to an arbitrary function $f$ from $C_0^+:=\{g\in C_0:\ g\geqslant0\}$ we therefore get
\[\nu(f)=\nu|_A(f)=\int f\,d\nu|_A=\sup_{K\in\mathfrak C_A}\,\int f\,d\nu|_K=\lim_{K\uparrow A}\,\nu|_K(f).\]
Thus, for every $\nu\in\mathcal E^+_A$,
\[\nu|_K\to\nu\text{ \ vaguely as $K\uparrow A$},\]
which gives, by the principle of descent,
\[\kappa(\nu,\nu)\leqslant\lim_{K\uparrow A}\,\kappa(\nu|_K,\nu|_K),\quad\kappa(\mu,\nu)\leqslant\lim_{K\uparrow A}\,\kappa(\mu,\nu|_K).\]
The opposite being obvious in view of the positivity of the kernel, equality in fact prevails in these two inequalities. So,
\[\|\mu-\nu\|=\lim_{K\uparrow A}\,\|\mu-\nu|_K\|\geqslant\lim_{K\uparrow A}\,\varrho(\mu,\mathcal E_K^+)\text{ \ for every\ }\nu\in\mathcal E^+_A,\]
and consequently
\[\varrho(\mu,\mathcal E^+_A)\geqslant\lim_{K\uparrow A}\,\varrho(\mu,\mathcal E_K^+).\]
Combining this with (\ref{stream}) establishes (\ref{eq-f-cont'''}), and hence (\ref{eq-cont'}).

The remainder of the proof is based on the fact that the orthogonal projection $\mu_{\mathcal E_A'}$ is characterized uniquely within $\mathcal E'_A$ by (\ref{proj1}) and (\ref{proj2}) with $\mathcal F:=\mathcal E'_A$, $\mathcal E'_A$ being a convex subcone of $\mathcal E^+$ which contains zero measure. If $A=K$ is compact, this actually holds with $\mathcal E^+_K$ in place of $\mathcal E_K'$, see (\ref{eq-id0}).

Assuming first that $A=K$ is compact, we begin by showing that $c_*(E)=0$, where $E:=\{x\in K:\ \kappa\mu(x)>\kappa\mu_{\mathcal E_K^+}(x)\}$. For any $\sigma\in\mathcal E^+$ and any compact $Q\subset E$, we have $\sigma|_Q\in\mathcal E^+_Q\subset\mathcal E^+_K$, and hence, by (\ref{proj1}) with $\mathcal F:=\mathcal E^+_K$,
\[(\mu-\mu_{\mathcal E_K^+},\sigma|_Q)=\int(\kappa\mu-\kappa\mu_{\mathcal E_K^+})\,d\sigma|_Q\leqslant0.\]
As seen from the definition of $E$, equality must prevail here; or, equivalently,
\[\kappa\mu-\kappa\mu_{\mathcal E_K^+}=0\text{ \ $\sigma|_Q$-a.e.}\]
(cf.\  \cite[Section~IV.2, Theorem~1]{B2}), which  is possible only if $\sigma|_Q=0$. Applying (\ref{2.3.1}), we therefore conclude that, indeed, $c_*(E)=0$, and so
\begin{equation}\label{geq1}\kappa\mu_{\mathcal E_K^+}\geqslant\kappa\mu\text{ \ n.e.\ on\ }K.\end{equation}

Since $c_*(E)=0$ while $\mu_{\mathcal E_K^+}\in\mathcal E^+_K$ is bounded, the (Borel) set $E$ is $\mu_{\mathcal E_K^+}$-neg\-lig\-ible (cf.\ Lemma~\ref{l-negl}).\footnote{The set $E$ is Borel because  $\kappa(\mu-\mu_{\mathcal E_K^+})$ is Borel measurable on $X$, being the difference between two l.s.c.\ functions.} Thus $\kappa\mu_{\mathcal E_K^+}\geqslant\kappa\mu$ $\mu_{\mathcal E_K^+}$-a.e., which in view of (\ref{proj2}) implies that
\[\kappa\mu_{\mathcal E_K^+}=\kappa\mu\text{ \ $\mu_{\mathcal E_K^+}$-a.e.}\]
By the domination principle, $\kappa\mu_{\mathcal E_K^+}\leqslant\kappa\mu$ on $X$, which combined with (\ref{geq1}) gives
\begin{equation}\label{eq-pr-c1}\kappa\mu_{\mathcal E_K^+}=\kappa\mu\text{ \ n.e.\ on\ }K.\end{equation}

Dropping now the current requirement on $A$ of being compact and applying (\ref{eq-pr-c1}) to each of $K,K'\in\mathfrak C_A$ such that $K\subset K'$, we get
\[\kappa\mu_{\mathcal E_K^+}=\kappa\mu=\kappa\mu_{\mathcal E_{K'}^+}\text{ \ n.e.\ on\ }K,\]
the inner capacity being countably subadditive on universally measurable subsets of $X$ \cite[Lemma~2.3.5]{F1}.
Similarly as in the preceding paragraph, this actually holds $\mu_{\mathcal E_K^+}$-a.e., which yields, again by the domination principle,
\[\kappa\mu_{\mathcal E_K^+}\leqslant\kappa\mu_{\mathcal E_{K'}^+}\leqslant\kappa\mu\text{ \ on\ }X.\]
Hence, for some function $\psi$ that is ${}\leqslant\kappa\mu$ on $X$,
\begin{equation}\label{convv}\kappa\mu_{\mathcal E_K^+}\uparrow\psi\text{ \ pointwise on $X$ as $K\uparrow A$}.\end{equation}
To establish both (\ref{eq-pr-2}) and (\ref{later-ar}), it thus remains to show that
\begin{equation}\label{convv''}\psi=\kappa\mu_{\mathcal E_A'}\text{ \ on $X$}.\end{equation}

The strong topology on $\mathcal E$ having a countable base of neighborhoods, it follows from (\ref{eq-cont'}) that there is a subsequence $(\mu_{\mathcal E_{K_j}^+})_{j\in\mathbb N}$ of the net $(\mu_{\mathcal E_K^+})_{K\in\mathfrak C}$ which converges strongly (hence vaguely) to $\mu_{\mathcal E_A'}$. Therefore, by \cite[Theorem~3.6]{Fu5},
\[\kappa\mu_{\mathcal E_{K_j}^+}\to\kappa\mu_{\mathcal E_A'}\text{ \ pointwise q.e.\ on $X$ (as $j\to\infty$)},\]
which in view of (\ref{convv}) implies that $\psi=\kappa\mu_{\mathcal E_A'}$ q.e.\ on $X$.
Thus, for every $K\in\mathfrak C_A$, the inequality
\[\kappa\mu_{\mathcal E_K^+}\leqslant\kappa\mu_{\mathcal E_A'}\]
holds q.e.\ on $X$, hence $\mu_{\mathcal E_K^+}$-a.e., and so everywhere on $X$, the last two conclusions being obtained in the same manner as above. Letting now $K\uparrow A$ gives
\[\psi=\limsup_{K\uparrow A}\,\kappa\mu_{\mathcal E_K^+}\leqslant\kappa\mu_{\mathcal E_A'}\leqslant\liminf_{K\uparrow A}\,\kappa\mu_{\mathcal E_K^+}\text{ \ on\ }X,\]
the latter inequality being derived from (\ref{eq-cont'}) by use of the principle of descent. This establishes (\ref{convv''}), and hence also both (\ref{eq-pr-2}) and (\ref{later-ar}).

The proof is completed by verifying (\ref{eq-pr-1}) and (\ref{eq-pr-0}). The function $\varphi:=\kappa\mu-\kappa\mu_{\mathcal E_A'}$ is $\nu$-neg\-lig\-ible for every $\nu\in\mathcal E'_A$ (cf.\ \cite[Section~IV.2, Definition~1]{B2}), for
\[\int\varphi\,d\nu=(\mu-\mu_{\mathcal E_A'},\nu)\leqslant0\]
by (\ref{proj1}) with $\mathcal F:=\mathcal E_A'$, whereas $\varphi\geqslant0$ by (\ref{eq-pr-2}). Again by \cite[Section~IV.2, Theorem~1]{B2}, 
\begin{equation}\label{proj-proof}\kappa\mu_{\mathcal E_A'}=\kappa\mu\text{ \ $\nu$-a.e.\ for every\ }\nu\in\mathcal E'_A,\end{equation}
which leads to (\ref{eq-pr-1}) in a manner similar to that in the proof of (\ref{eq-pr-c1}). Indeed, for every $\sigma\in\mathcal E^+$ and every compact subset $Q$ of the set $E_1$ of all $x\in A$ with $\kappa\mu_{\mathcal E_A'}(x)<\kappa\mu(x)$, we have $\sigma|_Q=0$, hence $\mathcal E^+_Q=\{0\}$, and consequently $c_*(E_1)=0$, by (\ref{2.3.1}). Finally, (\ref{proj-proof}) with $\nu:=\mu_{\mathcal E_A'}$ $({}\in\mathcal E'_A$) yields (\ref{eq-pr-0}).
\end{proof}

\begin{remark}\label{rem-char}For a compact set $A$, the orthogonal projection $\mu_{\mathcal E_A'}$  $({}=\mu_{\mathcal E_A^+})$ is characterized uniquely within $\mathcal E'_A$ $({}=\mathcal E_A^+)$ by (\ref{eq-pr-1}).\footnote{The same actually holds if $A$ is closed or even quasiclosed, which is seen from Corollary~\ref{cor-def1}. Moreover, this can be generalized to arbitrary $A$ as follows: {\it the orthogonal projection\/ $\mu_{\mathcal E_A'}$ is characterized uniquely within\/ $\mathcal E'_A$ by\/ {\rm(\ref{eq-pr-1})}}. Indeed, on account of (\ref{proj''}), such an extension is derived directly from the characteristic property of inner balayage given in Theorem~\ref{th-bal-ex}.} Indeed, if $\kappa\theta=\kappa\mu$ n.e.\ on $A$ for some $\theta\in\mathcal E^+_A$, then, by (\ref{eq-pr-1}), the equality $\kappa\theta=\kappa\mu_{\mathcal E_A^+}$ holds true n.e.\ on $A$, and hence $(\theta+\mu_{\mathcal E_A^+})$-a.e.\ (Lemma~\ref{l-negl}), the set $\{x\in A:\ \kappa\theta(x)\ne\kappa\mu_{\mathcal E_A^+}(x)\}$ being Borel measurable while the measure $\theta+\mu_{\mathcal E_A^+}\in\mathcal E^+_A$ being bounded. This implies by integration $\|\theta-\mu_{\mathcal E_A^+}\|=0$, and consequently $\theta=\mu_{\mathcal E_A^+}$, by the energy principle.\end{remark}

\section{Proofs of Theorems~\ref{th-bal-ex} and \ref{th-bal-cont}}\label{sec-pr1}
Due to Theorem~\ref{th-pr}, we are now able to establish Theorems~\ref{th-bal-ex} and \ref{th-bal-cont}.

Fix $\mu\in\mathcal E^+$ and $A\subset X$. Noting from (\ref{eq-pr-1}) that $\mu_{\mathcal E_A'}\in\Lambda_{A,\mu}$ (see Definition~\ref{def-bal}), we shall verify (\ref{proj''}) by proving that for any given $\nu\in\Lambda_{A,\mu}$,
\begin{equation}\label{eq-char1}\kappa\mu_{\mathcal E_A'}\leqslant\kappa\nu\text{ \ on $X$}.\end{equation}
Combining $\kappa\nu\geqslant\kappa\mu$ n.e.\ on $A$ with (\ref{eq-pr-c1}) applied to any $K\in\mathfrak C_A$ shows that
\[\kappa\mu_{\mathcal E_K^+}\leqslant\kappa\nu\]
holds n.e.\ on $K$, hence $\mu_{\mathcal E^+_K}$-a.e.\ by Lemma~\ref{l-negl},\footnote{The application of Lemma~\ref{l-negl} is justified in the same manner as above, see e.g.\ Remark~\ref{rem-char}.} and consequently on all of $X$, by the domination principle. On account of (\ref{later-ar}), letting here $K\uparrow A$ results in (\ref{eq-char1}), and (\ref{proj''}) follows.
Substituting now (\ref{proj''}) into (\ref{eq-pr-1})--(\ref{eq-pr-2}) gives (\ref{eq-bala-f1})--(\ref{eq-bala-f2}). Also note that, because of (\ref{proj''}),
\begin{equation}\label{cor-st}\mu^A=\mu\text{ \ for every\ }\mu\in\mathcal E'_A.\end{equation}

By (\ref{proj''}) and (\ref{eq-id0}) with $A:=K$ compact, $\mu^K=\mu_{\mathcal E_K^+}$. Combining this and (\ref{proj''}) with (\ref{eq-cont'}) and (\ref{later-ar}) yields (\ref{eq-bal-cont}) and (\ref{later-bal}), thereby establishing Theorem~\ref{th-bal-cont}.

To complete the proof of Theorem~\ref{th-bal-ex}, it remains to verify that $\mu^A$ is characterized uniquely within $\mathcal E_A'$ by (\ref{eq-bala-f1}). Having fixed  $\theta\in\mathcal E_A'$ with the property
\begin{equation}\label{eq-pot}\kappa\theta=\kappa\mu\text{ \ n.e.\ on\ }A,\end{equation}
we need to show that then necessarily
\begin{equation}\label{char}\theta=\mu^A.\end{equation}
Applying (\ref{eq-pr-c1}) to $K\in\mathfrak C_A$ and each of $\theta$ and $\mu$, we conclude from (\ref{eq-pot}) that
\[\kappa\theta_{\mathcal E_K^+}=\kappa\theta=\kappa\mu=\kappa\mu_{\mathcal E_K^+}\text{ \ n.e.\ on $K$}.\]
In view of the characteristic property of orthogonal projection onto $\mathcal E_K^+$ observed in Remark~\ref{rem-char}, this gives $\theta_{\mathcal E_K^+}=\mu_{\mathcal E_K^+}$, and hence, by (\ref{proj''}),
\[\theta^K=\mu^K\text{ \ for every\ }K\in\mathfrak C_A.\]
Letting now $K\uparrow A$ and using (\ref{eq-bal-cont}) we get
\[\theta^A=\mu^A,\]
the vague limit of any convergent net in the (Hausdorff) space $\mathfrak M$ being unique. As $\theta^A=\theta$ by (\ref{cor-st}) applied to $\theta\in\mathcal E'_A$, (\ref{char}) follows.

\section{Further properties of inner balayage}\label{sec-further}

\begin{proposition}[{\rm Monotonicity property}]\label{cor-mon}If\/ $A\subset Q$, then for any\/ $\mu\in\mathcal E^+$,
\begin{equation*}\label{mon}\kappa\mu^A\leqslant\kappa\mu^Q.\end{equation*}
\end{proposition}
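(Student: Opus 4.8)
The plan is to exploit the representation of $\kappa\mu^A$ as a supremum of the potentials $\kappa\mu^K$ over the compact exhaustion of $A$, which is already at hand from Theorem~\ref{th-bal-cont}. Indeed, (\ref{later-bal1}) gives
\[
\kappa\mu^A=\sup_{K\in\mathfrak C_A}\,\kappa\mu^K\text{ \ on\ }X,
\]
and likewise with $Q$ in place of $A$. Since $A\subset Q$, every compact subset of $A$ is a compact subset of $Q$, so $\mathfrak C_A\subset\mathfrak C_Q$. Passing to the supremum over the larger index family can only increase the value, whence
\[
\kappa\mu^A=\sup_{K\in\mathfrak C_A}\,\kappa\mu^K\leqslant\sup_{K\in\mathfrak C_Q}\,\kappa\mu^K=\kappa\mu^Q\text{ \ on\ }X.
\]
This is essentially the whole argument, and once (\ref{later-bal1}) is available there is no real obstacle; the only thing to verify is the trivial set-theoretic inclusion $\mathfrak C_A\subset\mathfrak C_Q$.

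As an alternative route that bypasses the convergence theorem, one can argue straight from the variational Definition~\ref{def-bal}. Here the key point to check is that the constraint classes satisfy $\Lambda_{Q,\mu}\subset\Lambda_{A,\mu}$. If $\nu\in\mathcal E^+$ obeys $\kappa\nu\geqslant\kappa\mu$ n.e.\ on $Q$, then the exceptional set (of inner capacity zero) meets $A\subset Q$ in a set that is again of inner capacity zero, by the monotonicity of inner capacity (cf.\ (\ref{153}) and (\ref{2.3.1})); hence $\kappa\nu\geqslant\kappa\mu$ n.e.\ on $A$ as well, i.e.\ $\nu\in\Lambda_{A,\mu}$. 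Consequently
\[
\kappa\mu^A=\inf_{\nu\in\Lambda_{A,\mu}}\,\kappa\nu\leqslant\inf_{\nu\in\Lambda_{Q,\mu}}\,\kappa\nu=\kappa\mu^Q,
\]
the infimum taken over the larger class $\Lambda_{A,\mu}$ being the smaller. The single subtlety on this second route is the behaviour of the ``n.e.'' exceptional sets under restriction to the subset $A$, which is immediate from monotonicity of $c_*$. Either formulation yields the claimed monotonicity, and I would favour the first for its brevity since (\ref{later-bal1}) is already proved.
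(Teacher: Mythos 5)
Your first argument is exactly the paper's proof: Proposition~\ref{cor-mon} is deduced in one line from (\ref{later-bal1}) together with the inclusion $\mathfrak C_A\subset\mathfrak C_Q$. Your alternative route via $\Lambda_{Q,\mu}\subset\Lambda_{A,\mu}$ and the variational characterization in Corollary~\ref{cor-def} is also correct (monotonicity of $c_*$ does handle the ``n.e.'' exceptional sets), but the paper does not use it.
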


\begin{proof}Since $\mathfrak C_A\subset\mathfrak C_Q$, this follows directly from (\ref{later-bal1}).
\end{proof}

\begin{proposition}[{\rm Balayage with a rest}]\label{cor-rest}If\/ $A\subset Q$, then for any\/ $\mu\in\mathcal E^+$,
\begin{equation*}\label{eq-rest}\mu^A=(\mu^Q)^A.\end{equation*}
\end{proposition}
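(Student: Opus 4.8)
The plan is to invoke the uniqueness characterization in Theorem~\ref{th-bal-ex}, according to which, for any $\sigma\in\mathcal E^+$, the swept measure $\sigma^A$ is the \emph{only} element of $\mathcal E'_A$ whose potential coincides with $\kappa\sigma$ n.e.\ on $A$. It will therefore suffice to show that the measure $(\mu^Q)^A$ lies in $\mathcal E'_A$ and satisfies $\kappa(\mu^Q)^A=\kappa\mu$ n.e.\ on $A$; the identity $(\mu^Q)^A=\mu^A$ then follows at once.

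First I would record the two facts furnished by Theorem~\ref{th-bal-ex}. Applying it to the balayage of $\mu^Q\in\mathcal E^+$ onto $A$ gives, on the one hand, $(\mu^Q)^A\in\mathcal E'_A$ (recall $\sigma^A=\sigma_{\mathcal E'_A}$ by (\ref{proj''})), and on the other hand $\kappa(\mu^Q)^A=\kappa\mu^Q$ n.e.\ on $A$, by property (\ref{eq-bala-f1}). Applying it instead to the balayage of $\mu$ onto $Q$ yields $\kappa\mu^Q=\kappa\mu$ n.e.\ on $Q$, again by (\ref{eq-bala-f1}).

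Next I would transport the latter relation from $Q$ down to $A$: since $A\subset Q$, an exceptional subset of $Q$ of inner capacity zero meets $A$ in a set that again has inner capacity zero (monotonicity of $c_*$, cf.\ (\ref{153})), so $\kappa\mu^Q=\kappa\mu$ n.e.\ on $A$ as well. Combining this with $\kappa(\mu^Q)^A=\kappa\mu^Q$ n.e.\ on $A$ and discarding the union of the two exceptional sets gives $\kappa(\mu^Q)^A=\kappa\mu$ n.e.\ on $A$. Together with $(\mu^Q)^A\in\mathcal E'_A$, the uniqueness clause of Theorem~\ref{th-bal-ex} then forces $(\mu^Q)^A=\mu^A$.

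I do not anticipate any serious obstacle. The only point requiring (routine) care is the bookkeeping of the two inner-capacity-zero exceptional sets: each is the intersection of $A$ with a Borel set (the comparison sets of the relevant l.s.c.\ potentials being Borel), and the union of two such sets of inner capacity zero again has inner capacity zero. One checks this by splitting an arbitrary $\sigma\in\mathcal E^+$ concentrated on the union into its traces on the two Borel pieces and applying (\ref{2.3.1}) to each; it rests on the monotonicity and (countable) subadditivity of $c_*$ \cite[Lemma~2.3.5]{F1}.
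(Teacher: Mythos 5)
Your proposal is correct and follows essentially the same route as the paper: both measures lie in $\mathcal E'_A$, one establishes the chain $\kappa(\mu^Q)^A=\kappa\mu^Q=\kappa\mu=\kappa\mu^A$ n.e.\ on $A$, and the uniqueness clause of Theorem~\ref{th-bal-ex} concludes. Your care about combining the exceptional sets matches the paper's footnote, which invokes the strengthened countable subadditivity of $c_*$ on sets of the form $A\cap U_k$ with $U_k$ universally measurable.
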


\begin{proof}As seen from Theorem~\ref{th-bal-ex}, both $\mu^A$ and $(\mu^Q)^A$ belong to $\mathcal E'_A$, and moreover\footnote{Eq.\ (\ref{eq-restt}) is implied by the following strengthened version of countable subadditivity for inner capacity \cite[p.~158, Remark]{F1}:
{\it For arbitrary\/ $A\subset X$ and universally measurable\/ $U_k\subset X$, $k\in\mathbb N$,} 
\[c_*\Bigl(\bigcup_{k\in\mathbb N}\,A\cap U_k\Bigr)\leqslant\sum_{k\in\mathbb N}\,c_*(A\cap U_k).\]}
\begin{equation}\label{eq-restt}\kappa(\mu^Q)^A=\kappa\mu^Q=\kappa\mu=\kappa\mu^A\text{ \ n.e.\ on $A$}.\end{equation}
By the characteristic property of inner balayage given in Theorem~\ref{th-bal-ex}, $\mu^A$ and $(\mu^Q)^A$ are indeed equal.\end{proof}

\begin{proposition}\label{fixx} Given\/ $A\subset X$,
\[\mu^A=\mu\text{ \ for every\ }\mu\in\mathcal E'_A\]
{\rm(}thus in particular for every\/ $\mu\in\mathcal E^+_A${\rm)}.\end{proposition}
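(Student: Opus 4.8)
The plan is to read this off directly from the identification $\mu^A=\mu_{\mathcal E'_A}$ established in Theorem~\ref{th-bal-ex} (Eq.~(\ref{proj''})). First I would recall that $\mu_{\mathcal E'_A}$ is, by definition, the orthogonal projection of $\mu$ onto the strongly closed, convex set $\mathcal E'_A$, i.e.\ the unique element of $\mathcal E'_A$ realizing $\min_{\nu\in\mathcal E'_A}\|\mu-\nu\|$ (Theorem~\ref{th-proj} applied to $\mathcal F:=\mathcal E'_A$).

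Now assume $\mu\in\mathcal E'_A$. Then $\mu$ itself is an admissible competitor in this minimization, so that $\min_{\nu\in\mathcal E'_A}\|\mu-\nu\|\leqslant\|\mu-\mu\|=0$; hence the minimum equals $0$ and is attained at $\nu=\mu$. By the uniqueness of the orthogonal projection we get $\mu_{\mathcal E'_A}=\mu$, equivalently $\|\mu-\mu^A\|=0$, which forces $\mu^A=\mu$ via the energy principle. This is precisely the auxiliary observation (\ref{cor-st}) recorded during the proof of Theorem~\ref{th-bal-ex} in Sect.~\ref{sec-pr1}.

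For the parenthetical assertion I would simply note the inclusion $\mathcal E^+_A\subset\mathcal E'_A$, since $\mathcal E'_A$ is by definition the strong closure of $\mathcal E^+_A$. Thus every $\mu\in\mathcal E^+_A$ already lies in $\mathcal E'_A$, and the statement just proved applies to it verbatim.

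There is no genuine obstacle here: once the projection characterization $\mu^A=\mu_{\mathcal E'_A}$ is in hand, the claim is an immediate fixed-point property of orthogonal projections, and the only point to verify is that the hypothesis $\mu\in\mathcal E'_A$ places $\mu$ in the feasible set over which the projection is taken.
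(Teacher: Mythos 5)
Your proposal is correct and follows exactly the paper's argument: the paper's proof also reads the claim off from the identification $\mu^A=\mu_{\mathcal E'_A}$ in (\ref{proj''}) together with the fact that the orthogonal projection onto $\mathcal E'_A$ fixes every element of $\mathcal E'_A$. Your write-up merely spells out the fixed-point property of the projection (minimum $0$, attained at $\mu$, uniqueness) that the paper treats as obvious.
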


\begin{proof}Since $\mu_{\mathcal E'_A}=\mu$ for every $\mu\in\mathcal E'_A$, this is obvious from (\ref{proj''}).\end{proof}

{\it In the remainder of this section, the kernel\/ $\kappa$ is supposed to satisfy Frostman's maximum principle\/} (see footnote~\ref{F} for details).

\begin{proposition}[{\rm Principle of positivity of mass}]\label{cor-mass} Given\/ $\mu\in\mathcal E^+$ and\/ $A\subset X$,
\begin{equation}\label{eq-mass}\mu^A(X)\leqslant\mu(X).\end{equation}
\end{proposition}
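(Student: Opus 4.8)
The plan is to exploit the variational characterization of the inner balayage together with the equilibrium measure and Frostman's maximum principle, following the classical idea that balayage does not increase total mass when the kernel obeys the first maximum principle. The essential point is that the total mass $\mu^A(X)$ can be recovered by testing the potential $\kappa\mu^A$ against equilibrium measures, and that the equilibrium potential is bounded above by $1$ where Frostman's principle holds.

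First I would reduce to the case where $A=K$ is compact, using the convergence theorem just proved. Indeed, by Theorem~\ref{th-bal-cont} we have $\mu^K\to\mu^A$ vaguely as $K\uparrow A$, and the map $\nu\mapsto\nu(X)$ is vaguely lower semicontinuous (Lemma~\ref{lsc}); hence $\mu^A(X)\leqslant\liminf_{K\uparrow A}\mu^K(X)$. Therefore it suffices to prove $\mu^K(X)\leqslant\mu(X)$ for every compact $K\subset A$, since then $\mu^A(X)\leqslant\liminf_{K\uparrow A}\mu^K(X)\leqslant\mu(X)$. (One should double-check the direction of the inequality here: vague lower semicontinuity gives the mass of the limit is at most the liminf of the masses, which is exactly what is needed.)

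For compact $K$, the natural device is the equilibrium measure $\gamma_K\in\mathcal E^+$, which exists since $c_*(K)<\infty$. The idea is to compute the mutual energy $\kappa(\mu^K,\gamma_K)$ in two ways. On one hand, integrating the potential $\kappa\gamma_K$ against $\mu^K$ and using that $\kappa\mu^K$ is concentrated (via $\mu^K\in\mathcal E^+_K$) on $K$ together with the equilibrium relations $\kappa\gamma_K\geqslant1$ n.e.\ on $K$ and $\kappa\gamma_K\leqslant 1$ on $S(\gamma_K)$, one obtains $\kappa(\mu^K,\gamma_K)=\int\kappa\gamma_K\,d\mu^K\geqslant\mu^K(X)$, the negligible exceptional set being harmless because $\mu^K$ has finite energy (Lemma~\ref{l-negl}). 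On the other hand, by symmetry $\kappa(\mu^K,\gamma_K)=\int\kappa\mu^K\,d\gamma_K$, and here I use the balayage inequality $\kappa\mu^K\leqslant\kappa\mu$ on $X$ from \eqref{eq-bala-f2}, so that $\int\kappa\mu^K\,d\gamma_K\leqslant\int\kappa\mu\,d\gamma_K=\int\kappa\gamma_K\,d\mu$. Finally, Frostman's maximum principle applied to $\gamma_K$ (which satisfies $\kappa\gamma_K\leqslant1$ on $S(\gamma_K)$) gives $\kappa\gamma_K\leqslant1$ on all of $X$, whence $\int\kappa\gamma_K\,d\mu\leqslant\mu(X)$. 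Chaining these yields $\mu^K(X)\leqslant\kappa(\mu^K,\gamma_K)=\kappa(\mu,\gamma_K)\leqslant\mu(X)$.

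The main obstacle I anticipate is the careful handling of the n.e.\ exceptional sets in the first estimate: the relation $\kappa\gamma_K\geqslant1$ holds only nearly everywhere on $K$, so to conclude $\int\kappa\gamma_K\,d\mu^K\geqslant\mu^K(X)=\int 1\,d\mu^K$ one must verify that the set where $\kappa\gamma_K<1$ on $K$ is $\mu^K$-negligible. This is exactly where Lemma~\ref{l-negl} enters, since that exceptional set has inner capacity zero and $\mu^K\in\mathcal E^+$ is bounded (so the set is $\mu^K$-$\sigma$-finite, indeed $\mu^K$-negligible). Once the measurability and negligibility bookkeeping is settled, the chain of (in)equalities is routine, and passing to the limit over $K\uparrow A$ completes the proof.
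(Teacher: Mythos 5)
Your proposal is correct and follows essentially the same route as the paper: test against the equilibrium measure $\gamma_K$, use Fubini's theorem together with Frostman's bound $\kappa\gamma_K\leqslant1$ on $X$, and pass to the limit $K\uparrow A$ via vague lower semicontinuity of total mass. The only (harmless) differences are that you work with the inequalities $\kappa\gamma_K\geqslant1$ n.e.\ on $K$ and $\kappa\mu^K\leqslant\kappa\mu$ on $X$ where the paper uses the equalities $\kappa\gamma_K=1$ and $\kappa\mu^K=\kappa\mu$ n.e.\ on $K$, and that the middle link of your final chain should read $\kappa(\mu^K,\gamma_K)\leqslant\kappa(\mu,\gamma_K)$ rather than an equality.
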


\begin{proof}For any $K\in\mathfrak C_A$, consider the equilibrium measure $\gamma_K$ on $K$ (Sect.~\ref{sec11}). Then, by Fubini's theorem, 
\begin{align}\label{line}\mu^K(X)&=\int1\,d\mu^K=\int\kappa\gamma_K\,d\mu^K\\{}&=\int\kappa\mu^K\,d\gamma_K=\int\kappa\mu\,d\gamma_K=\int\kappa\gamma_K\,d\mu,\notag\end{align}
because $\kappa\gamma_K=1$, resp.\ $\kappa\mu^K=\kappa\mu$, holds n.e.\ on $K$ (cf.\ (\ref{eqq}) and (\ref{eq-bala-f1})), and hence $\mu^K$-a.e., resp.\ $\gamma_K$-a.e.\ (Lemma~\ref{l-negl}). Since $\kappa\gamma_K\leqslant1$ on $X$, this gives
\[\mu^K(X)\leqslant\mu(X).\]
Letting $K\uparrow A$ we get (\ref{eq-mass}) by use of Lemma~\ref{lsc} (noting that $\mu^K\to\mu^A$ vaguely).
\end{proof}

Before providing a formula for evaluation of the total mass $\mu^A(X)$ for the inner swept measure $\mu^A$ (Proposition~\ref{th-tot}), we shall first analyze the continuity of the inner equilibrium potential $\kappa\gamma_A$ under the exhaustion of $A$ by $K$ compact.

\begin{lemma}\label{l-eq}Given\/ $A\subset X$ with\/ $c_*(A)<\infty$,
\begin{equation}\label{later-eq}\kappa\gamma_K\uparrow\kappa\gamma_A\text{ \ pointwise on\/ $X$ as\/ $K\uparrow A$},\end{equation}
$\gamma_A$ denoting the inner equilibrium measure for\/ $A$.
\end{lemma}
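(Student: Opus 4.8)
The plan is to follow the pattern of the proof of Theorem~\ref{th-pr}, with the orthogonal projections $\mu_{\mathcal E_K^+}$ replaced by the inner equilibrium measures $\gamma_K$ and the cones $\mathcal E_K^+$ replaced by the (convex) feasible cones $\Gamma_K:=\{\nu\in\mathcal E^+:\ \kappa\nu\geqslant1\ \text{n.e.\ on}\ K\}$, $K\in\mathfrak C_A$. First I would show that the net $(\gamma_K)_{K\in\mathfrak C_A}$ is strong Cauchy. For $K\subset K'$ we have $\Gamma_{K'}\subset\Gamma_K$ and $\gamma_{K'}\in\Gamma_{K'}\subset\Gamma_K$, so Lemma~\ref{4.1.1} applied with $\Gamma:=\Gamma_K$ and $\lambda:=\gamma_K$ (the element of $\Gamma_K$ of minimal norm) gives $\|\gamma_K-\gamma_{K'}\|^2\leqslant\|\gamma_{K'}\|^2-\|\gamma_K\|^2=c_*(K')-c_*(K)$. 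Since $c_*(K)=\|\gamma_K\|^2$ increases with $K$ and is bounded above by $c_*(A)<\infty$, the increasing net $(c_*(K))$ converges to $c_*(A)$ by~(\ref{153}) and is therefore Cauchy in $\mathbb R$; a routine use of $K'':=K\cup K'$ then upgrades the preceding estimate to the assertion that $(\gamma_K)$ is strong Cauchy. By perfectness of the kernel there is a unique $\gamma_0\in\mathcal E^+$ with $\gamma_K\to\gamma_0$ both strongly and vaguely as $K\uparrow A$.

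Next I would establish the monotone pointwise convergence of the potentials. For $K\subset K'$, Frostman's maximum principle gives $\kappa\gamma_K=1$ n.e.\ on $K$ (Eq.~(\ref{eqq})), while $\kappa\gamma_{K'}\geqslant1$ n.e.\ on $K'\supset K$; hence $\kappa\gamma_K\leqslant\kappa\gamma_{K'}$ n.e.\ on $K$, therefore $\gamma_K$-a.e.\ (Lemma~\ref{l-negl}, the measure $\gamma_K$ being bounded), and finally everywhere on $X$ by the domination principle. Thus $(\kappa\gamma_K)_{K\in\mathfrak C_A}$ is increasing. To identify its pointwise limit I would argue exactly as in the closing part of the proof of Theorem~\ref{th-pr}: selecting from $(\gamma_K)$ a strongly convergent sequence and invoking \cite[Theorem~3.6]{Fu5} shows that $\sup_K\kappa\gamma_K=\kappa\gamma_0$ q.e.\ on $X$; the inequality $\kappa\gamma_K\leqslant\kappa\gamma_0$ q.e.\ then holds $\gamma_K$-a.e., hence everywhere, by the domination principle, and combining its supremum over $K$ with the principle of descent (applied to $\gamma_K\to\gamma_0$ vaguely) yields $\kappa\gamma_K\uparrow\kappa\gamma_0$ everywhere on $X$.

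It remains to identify $\gamma_0$ with $\gamma_A$. The strong convergence gives $\|\gamma_0\|^2=\lim_{K\uparrow A}c_*(K)=c_*(A)$ by~(\ref{153}). To see that $\gamma_0$ is admissible, i.e.\ $\kappa\gamma_0\geqslant1$ n.e.\ on $A$, consider the set $E:=\{x\in A:\ \kappa\gamma_0(x)<1\}$: for any compact $Q\subset E$ (so $Q\in\mathfrak C_A$, $c_*(Q)\leqslant c_*(A)<\infty$) we have $\kappa\gamma_0\geqslant\kappa\gamma_Q=1$ n.e.\ on $Q$, using $\kappa\gamma_Q\leqslant\kappa\gamma_0$ from the previous step together with Frostman; this forces $c_*(Q)=0$, whence $c_*(E)=0$ again by~(\ref{153}). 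Thus $\gamma_0$ attains the minimal energy $c_*(A)$ among the admissible measures, and the uniqueness of the inner equilibrium measure gives $\gamma_0=\gamma_A$; in particular $\kappa\gamma_K\uparrow\kappa\gamma_A$ on $X$, which is~(\ref{later-eq}). I expect the main obstacle to be the upgrade in the second paragraph from q.e.\ to everywhere pointwise convergence of the potentials, which is handled by the same domination-and-descent device as in Theorem~\ref{th-pr}; the remaining work is bookkeeping with~(\ref{153}) and the energy principle.
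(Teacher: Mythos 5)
Your proof is correct and follows essentially the same route as the paper: the paper likewise first obtains $\gamma_K\to\gamma_A$ strongly and vaguely (citing \cite[Proof of Theorem~4.1]{F1} for what your first and third paragraphs reconstruct from Lemma~\ref{4.1.1}, the convergence $c_*(K)\uparrow c_*(A)$, and uniqueness of the minimizer), then combines $\kappa\gamma_K=1=\kappa\gamma_{K'}=\kappa\gamma_A$ n.e.\ on $K$ with Lemma~\ref{l-negl} and the domination principle to show that the net $(\kappa\gamma_K)$ increases to a function not exceeding $\kappa\gamma_A$, and closes with the principle of descent. The only difference is one of bookkeeping: you identify the strong limit with $\gamma_A$ at the end, via admissibility of the limit and minimality of its energy, whereas the paper takes that identification as given from Fuglede's argument at the outset.
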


\begin{proof}Since $c(K)\uparrow c_*(A)$ as $K\uparrow A$ (cf.\ (\ref{153})), we conclude in much the same way as it was done in \cite[Proof of Theorem~4.1]{F1} that
\begin{equation}\label{conv-eq-m}\gamma_K\to\gamma_A\text{ \ strongly (hence, vaguely) in $\mathcal E^+$ as $K\uparrow A$}.\end{equation}
But for any $K,K'\in\mathfrak C_A$ such that $K\subset K'$,
\[1=\kappa\gamma_K=\kappa\gamma_{K'}=\kappa\gamma_A\text{ \ n.e.\ on\ }K,\]
hence $\gamma_K$-a.e. Therefore, by the domination principle, the net $(\kappa\gamma_K)_{K\in\mathfrak C_A}$ increases pointwise on $X$ to some function that does not exceed  $\kappa\gamma_A$. To establish (\ref{later-eq}), we thus only need to verify the inequality
\[\kappa\gamma_A\leqslant\lim_{K\uparrow A}\,\kappa\gamma_K\text{ \ on\ }X,\]
which follows directly from (\ref{conv-eq-m}) in view of the vague lower semicontinuity of the mapping $\nu\mapsto\kappa\nu(\cdot)$, $\nu\in\mathfrak M^+$ (the principle of descent \cite[Lemma~2.2.1(b)]{F1}).
\end{proof}

\begin{proposition}\label{th-tot}For any\/ $\mu\in\mathcal E^+$ and any\/ $A\subset X$ with\/ $c_*(A)<\infty$,\footnote{For the $\alpha$-Riesz kernel of order $\alpha\in(0,2]$ on $\mathbb R^n$, $n\geqslant3$, Proposition~\ref{th-tot} remains valid for any $A\subset\mathbb R^n$ that is inner $\alpha$-thin at infinity (even if $c_*(A)=\infty$), which is seen by combining Theorems~2.5 and 5.1 from \cite{Z-bal2}. (For the concept of inner $\alpha$-thin\-ness at infinity, see \cite[Definition~2.2]{Z-bal2}.) Moreover, then the requirement on $\mu$ of having finite energy can be simply omitted.}
\begin{equation}\label{eq-mass1}\mu^A(X)=\int\kappa\gamma_A\,d\mu.\end{equation}
\end{proposition}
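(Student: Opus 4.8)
The plan is to derive the mass formula \eqref{eq-mass1} by first establishing it for compact $A=K$ and then passing to the limit $K\uparrow A$ using the convergence theorems already at our disposal. For the compact case, the computation is exactly the chain of equalities already carried out in the proof of Proposition~\ref{cor-mass}: by Fubini's theorem,
\[
\mu^K(X)=\int1\,d\mu^K=\int\kappa\gamma_K\,d\mu^K=\int\kappa\mu^K\,d\gamma_K=\int\kappa\mu\,d\gamma_K=\int\kappa\gamma_K\,d\mu,
\]
where the replacement of $1$ by $\kappa\gamma_K$ is licit $\mu^K$-a.e.\ since $\kappa\gamma_K=1$ n.e.\ on $K$ (by \eqref{eqq}, which requires Frostman's principle, in force here) and $\mu^K$ is concentrated on $K$, while the replacement of $\kappa\mu^K$ by $\kappa\mu$ is licit $\gamma_K$-a.e.\ by \eqref{eq-bala-f1}; the relevant null sets have inner capacity zero, so Lemma~\ref{l-negl} applies to the bounded measures $\mu^K,\gamma_K$. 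This yields the identity $\mu^K(X)=\int\kappa\gamma_K\,d\mu$ for every $K\in\mathfrak C_A$.

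Next I would let $K\uparrow A$ and control both sides. On the left, Theorem~\ref{th-bal-cont} gives $\mu^K\to\mu^A$ vaguely, so by Lemma~\ref{lsc} we have $\mu^A(X)\leqslant\liminf_{K\uparrow A}\mu^K(X)$; but in fact I expect equality of the limit here, which I would obtain from the monotonicity of $\kappa\mu^K$ (Theorem~\ref{th-bal-cont}, \eqref{later-bal}) feeding into the right-hand side. On the right, Lemma~\ref{l-eq} provides the key input: since $c_*(A)<\infty$, we have $\kappa\gamma_K\uparrow\kappa\gamma_A$ pointwise on $X$ as $K\uparrow A$. The integrands $\kappa\gamma_K$ thus increase monotonically to $\kappa\gamma_A$, so the monotone convergence theorem gives
\[
\lim_{K\uparrow A}\,\int\kappa\gamma_K\,d\mu=\int\kappa\gamma_A\,d\mu.
\]

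Combining the two sides, the compact-case identity $\mu^K(X)=\int\kappa\gamma_K\,d\mu$ passes to the limit: the right-hand side converges to $\int\kappa\gamma_A\,d\mu$ by monotone convergence, so the left-hand side $\mu^K(X)$ converges to the same value, and since $\mu^K(X)\to\mu^A(X)$ (which I would justify via the vague convergence together with the now-established convergence of the masses), \eqref{eq-mass1} follows. The main obstacle I anticipate is making the passage to the limit on the mass side fully rigorous: vague convergence alone only yields lower semicontinuity of the total mass (Lemma~\ref{lsc}), not continuity, so one cannot directly assert $\mu^K(X)\to\mu^A(X)$ from $\mu^K\to\mu^A$ vaguely. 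The clean way around this is to read the convergence off the right-hand side instead: since the compact identity forces $\mu^K(X)=\int\kappa\gamma_K\,d\mu$ exactly, and the latter converges by monotone convergence, the numbers $\mu^K(X)$ already form a convergent (indeed increasing) net, and the semicontinuity inequality $\mu^A(X)\leqslant\lim_K\mu^K(X)=\int\kappa\gamma_A\,d\mu$ combined with the reverse inequality (obtained by applying the monotonicity \eqref{later-bal} and a symmetric Fubini computation against $\gamma_A$, or directly from Proposition~\ref{cor-mass} applied along the exhaustion) pins down equality.
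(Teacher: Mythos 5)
Your proposal is correct and follows essentially the same route as the paper: the compact-case identity $\mu^K(X)=\int\kappa\gamma_K\,d\mu$ (the chain (\ref{line})), then Lemma~\ref{l-eq} combined with monotone convergence on the right-hand side (note that, since $(\kappa\gamma_K)_{K\in\mathfrak C_A}$ is a \emph{net} rather than a sequence, one needs the Bourbaki version for increasing directed families of l.s.c.\ functions, which is exactly what the paper cites), Lemma~\ref{lsc} for $\mu^A(X)\leqslant\lim_{K\uparrow A}\mu^K(X)$, and a separate argument for the reverse inequality. The only point of divergence is that last step: the paper obtains $\mu^K(X)=(\mu^A)^K(X)\leqslant\mu^A(X)$ by combining balayage with a rest (Proposition~\ref{cor-rest}) with the mass-positivity principle (Proposition~\ref{cor-mass}), and your alternative of ``Proposition~\ref{cor-mass} applied along the exhaustion'' is incomplete as stated precisely because it needs Proposition~\ref{cor-rest} to convert $(\mu^A)^K$ into $\mu^K$; however, your other suggestion — the Fubini chain $\mu^K(X)=\int\kappa\gamma_K\,d\mu^K\leqslant\int\kappa\gamma_A\,d\mu^K=\int\kappa\mu^K\,d\gamma_A\leqslant\int\kappa\mu^A\,d\gamma_A=\int\kappa\gamma_A\,d\mu^A\leqslant\mu^A(X)$, using (\ref{later-bal}) and the bound $\kappa\gamma_A\leqslant1$ on $X$ supplied by Frostman's maximum principle (which is in force here) — is a legitimate substitute, so there is no genuine gap.
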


\begin{proof} Combining Propositions~\ref{cor-rest} and \ref{cor-mass} yields
\[\mu^K(X)=(\mu^A)^K(X)\leqslant\mu^A(X)\text{ \  for every\ }K\in\mathfrak C_A,\]
hence
\[
\limsup_{K\uparrow A}\,\mu^K(X)\leqslant\mu^A(X)\leqslant\liminf_{K\uparrow A}\,\mu^K(X),\]
the latter inequality being obtained from (\ref{eq-bal-cont}) with the aid of Lemma~\ref{lsc}. On account of (\ref{line}), this gives
\[\mu^A(X)=\lim_{K\uparrow A}\,\mu^K(X)=\lim_{K\uparrow A}\,\int\kappa\gamma_K\,d\mu.\]
Since the net $(\kappa\gamma_K)_{K\in\mathfrak C_A}$ increases pointwise on $X$ to $\kappa\gamma_A$ (Lemma~\ref{l-eq}), we get (\ref{eq-mass1}) by applying \cite[Section~IV.1, Theorem~1]{B2} to the integral on the right.
\end{proof}

\section{Proofs of Theorems~\ref{th-fr}, \ref{th-cont-bor1}, and \ref{th-cont-bor2}}\label{proofs2}

\subsection{Proof of Theorem~\ref{th-fr}}\label{sec-pr-fr} Fix $\mu\in\mathcal E^+$ and $A\subset X$. According to Theorems~\ref{th-bal-ex} and \ref{th-bal-cont}, the inner balayage $\mu^A$, introduced by Definition~\ref{def-bal}, can be found as the orthogonal projection $\mu_{\mathcal E'_A}$ of $\mu$ onto the cone $\mathcal E'_A$, and moreover
\[\mu^K\to\mu^A\text{ \ strongly (hence vaguely) as $K\uparrow A$}.\]

Assume now that $\mu$ is bounded while $\kappa$ satisfies Frostman's maximum principle. By Proposition~\ref{cor-mass},
$\mu^K(X)\leqslant\mu(X)$ for all $K\in\mathfrak C_A$, hence $\mu^K\in\check{\mathcal E}^+_{K,\mu}\subset\check{\mathcal E}^+_{A,\mu}$ (cf.\ footnote~\ref{f-negl}), 
which in view of the preceding display implies that, actually,\footnote{For the notation used here, see the paragraph followed by Theorem~\ref{th-fr}.}
\begin{equation}\label{in1}\mu^A\in\check{\mathcal E}'_{A,\mu}.\end{equation}
Noting that $\check{\mathcal E}'_{A,\mu}\subset\mathcal E'_A$, we therefore get
\[\varrho(\mu,\check{\mathcal E}'_{A,\mu})\leqslant\|\mu-\mu^A\|=\varrho(\mu,\mathcal E'_A)\leqslant\varrho(\mu,\check{\mathcal E}'_{A,\mu}),\]
hence
\[\varrho(\mu,\mathcal E'_A)=\varrho(\mu,\check{\mathcal E}'_{A,\mu}),\]
and so $\mu^A$ (${}=\mu_{\mathcal E'_A}$) serves simultaneously as the orthogonal projection of $\mu$ onto the (strongly closed, convex) truncated cone $\check{\mathcal E}'_{A,\mu}$. This proves (\ref{eq-balu1}).

As $\mu^A\in\Lambda_{A,\mu}$ (Corollary~\ref{cor-def}) and $\mu^A(X)\leqslant\mu(X)$ (Proposition~\ref{cor-mass}), 
\begin{equation}\label{in2}\mu^A\in\check{\Lambda}_{A,\mu}.\end{equation}
Thus
\[\inf_{\nu\in\check{\Lambda}_{A,\mu}}\,\kappa\nu\leqslant\kappa\mu^A=\min_{\nu\in\Lambda_{A,\mu}}\,\kappa\nu\leqslant\inf_{\nu\in\check{\Lambda}_{A,\mu}}\,\kappa\nu,\]
and so $\mu^A$ gives indeed a solution to problem (\ref{eq-balu}).

If $A$ now is quasiclosed, then the (convex) set $\check{\mathcal E}^+_{A,\mu}$ is strongly closed, which is seen from the latter part of Lemma~\ref{l-quasi} by homogeneity reasons. Hence, the orthogonal projection $\mu_{\check{\mathcal E}^+_{A,\mu}}$ does exist (see the former part of Theorem~\ref{th-proj}),\footnote{Observe that the latter part of Theorem~\ref{th-proj} is not applicable to $\mu_{\check{\mathcal E}^+_{A,\mu}}$, for $\check{\mathcal E}^+_{A,\mu}$ is not a cone.} and moreover it equals $\mu_{\check{\mathcal E}'_{A,\mu}}$. Substituting this into (\ref{eq-balu1}) establishes (\ref{eq-balu2}).

\subsection{Proof of Theorem~\ref{th-cont-bor1}}\label{sec-pr-bor} Fix $\mu\in\mathcal E^+$, and assume that $A$ is the union of an increasing sequence $(A_k)$. Then
$\mathcal E^+_{A_k}\subset\mathcal E^+_{A_{k+p}}\subset\mathcal E^+_A$ for any $k,p\in\mathbb N$ (cf.\ footnote~\ref{f-negl}), hence $\mathcal E'_{A_k}\subset\mathcal E'_{A_{k+p}}\subset\mathcal E'_A$. As $\mu^{A_k}=\mu_{\mathcal E_{A_k}'}$, we therefore have $\mu^{A_k}\in\mathcal E'_{A_k}\subset\mathcal E'_{A_{k+p}}$ and
\[\|\mu-\mu^{A_k}\|=\min_{\nu\in\mathcal E_{A_k}'}\,\|\mu-\nu\|\geqslant\min_{\nu\in\mathcal E_{A_{k+p}}'}\,\|\mu-\nu\|=\|\mu-\mu^{A_{k+p}}\|.\]
Thus, by Lemma~\ref{4.1.1} with $\Gamma:=\{\mu-\nu:\ \nu\in\mathcal E_{A_{k+p}}'\}$ and $\lambda:=\mu-\mu^{A_{k+p}}$,
\[\|\mu^{A_k}-\mu^{A_{k+p}}\|^2=\|(\mu-\mu^{A_k})-(\mu-\mu^{A_{k+p}})\|^2\leqslant\|\mu-\mu^{A_k}\|^2-\|\mu-\mu^{A_{k+p}}\|^2,\]
which together with the preceding display implies that the sequence $(\mu^{A_k})\subset\mathcal E'_A$ is strong Cauchy. Since $\mathcal E'_A$ is strongly complete, there is a unique $\mu_0\in\mathcal E'_A$ such that
\begin{equation}\label{c}\mu^{A_k}\to\mu_0\text{ \ strongly and vaguely}.\end{equation}
In turn, all this gives
\begin{align}\varrho(\mu,\mathcal E^+_A)=\varrho(\mu,\mathcal E'_A)&\leqslant\|\mu-\mu_0\|=\lim_{k\to\infty}\,\|\mu-\mu^{A_k}\|\label{line1}\\
{}&=\lim_{k\to\infty}\,\varrho(\mu,\mathcal E'_{A_k})=\lim_{k\to\infty}\,\varrho(\mu,\mathcal E^+_{A_k}).\notag\end{align}

Assuming now the sets $A_k$ to be universally measurable, we shall show that then
\begin{equation}\label{chain1}\lim_{k\to\infty}\,\varrho(\mu,\mathcal E^+_{A_k})\leqslant\varrho(\mu,\mathcal E^+_A),\end{equation}
which combined with (\ref{line1}) will imply $\|\mu-\mu_0\|=\varrho(\mu,\mathcal E'_A)$,
hence $\mu_0=\mu_{\mathcal E'_A}=\mu^A$, and consequently (\ref{eq-cont-bor}), by (\ref{c}).

In fact, for every $\nu\in\mathcal E_A^+$ and every $f\in C_0^+$,
\[\lim_{k\to\infty}\,\nu|_{A_k}(f)=\lim_{k\to\infty}\,\int1_{A_k}f\,d\nu=\int1_Af\,d\nu=\nu(f),\]
where the first and last equalities hold by the definition of the trace of a positive Radon measure to a meas\-ur\-able set (also noting that $\nu|_A=\nu$, $\nu$ being concentrated on $A$), while the second equality is obtained by applying \cite[Section~IV.1, Theorem~3]{B2} to the positive functions $1_{A_k}f$, $k\in\mathbb N$, with the upper envelope $1_Af$.
Thus
\[\nu|_{A_k}\to\nu\text{ \ vaguely},\]
which yields, by the principle of descent,
\[\kappa(\nu,\nu)\leqslant\lim_{k\to\infty}\,\kappa(\nu|_{A_k},\nu|_{A_k}),\quad\kappa(\mu,\nu)\leqslant\lim_{k\to\infty}\,\kappa(\mu,\nu|_{A_k}).\]
Since the kernel is positive, equality prevails in these two inequalities; therefore,
\begin{equation}\label{ccc}\|\mu-\nu\|=\lim_{k\to\infty}\,\|\mu-\nu|_{A_k}\|\geqslant\lim_{k\to\infty}\,\varrho(\mu,\mathcal E^+_{A_k})\text{ \ for every $\nu\in\mathcal E^+_A$},\end{equation}
and (\ref{chain1}) follows. (The inequality in (\ref{ccc}) is valid because for any $\sigma\in\mathcal E^+$ and any $\sigma$-meas\-ur\-able set $Q\subset X$, we have $\sigma|_Q\in\mathcal E^+_Q$.)

Having thus established (\ref{eq-cont-bor}), we complete the proof by verifying (\ref{later-bor}). By the monotonicity of inner balayage (Proposition~\ref{cor-mon}), the sequence $(\kappa\mu^{A_k})$ increases pointwise on $X$, and moreover
\[\lim_{k\to\infty}\,\kappa\mu^{A_k}\leqslant\kappa\mu^A\text{ \ on $X$}.\]
As $\mu^{A_k}\to\mu^A$ vaguely, the opposite inequality holds by the principle of descent.

\subsection{Proof of Theorem~\ref{th-cont-bor2}}\label{sec-pr4} Fix $\mu\in\mathcal E^+$. We are based on the fact (see Corollary~\ref{cor-def1}) that for $F\subset X$ quasiclosed (thus in particular closed),
\[\mu^F=\mu_{\mathcal E^+_F},\]
the convex cone $\mathcal E^+_F$ being strongly closed (Lemma~\ref{l-quasi}).

Suppose first that $A$ is the intersection of a lower directed family $(A_t)_{t\in T}$ of closed sets.
In view of the monotonicity of $(\mathcal E^+_{A_t})_{t\in T}$, we see in a manner similar to that in Sect.~\ref{sec-pr-bor} that $(\mu^{A_t})_{t\in T}$ is a strong Cauchy net in $\mathcal E^+$, and hence there is a unique $\mu_0\in\mathcal E^+$ with the property
\begin{equation}\label{bigcap1}\mu^{A_t}\to\mu_0\text{ \ strongly and vaguely as $t$ increases along $T$}.\end{equation}
Such a limit $\mu_0$ belongs to the class $\mathcal E^+_{A_t}$ for every $t\in T$, $\mathcal E^+_{A_t}$ being strongly closed. Since for a closed set $F\subset X$, $\mathcal E^+_F$ consists of all $\nu\in\mathcal E^+$ supported by $F$ (see Sect.~\ref{sec11}), $\mu_0$ is supported by every $A_t$, and hence by the intersection of $A_t$ over all $t$. Thus
\begin{equation}\label{bigcap}\mu_0\in\mathcal E^+_A,\end{equation}
and consequently
\[\varrho(\mu,\mathcal E^+_A)\leqslant\|\mu-\mu_0\|=\lim_{t}\,\|\mu-\mu_{\mathcal E^+_{A_t}}\|=\lim_{t}\,\varrho(\mu,\mathcal E^+_{A_t})\leqslant\varrho(\mu,\mathcal E^+_A),\]
the latter inequality being valid because $\mathcal E^+_A\subset\mathcal E^+_{A_t}$ for each $t\in T$.
This implies that, actually, $\mu_0=\mu_{\mathcal E^+_A}$, and (\ref{eq-cl}) follows.

By Proposition~\ref{cor-mon}, the net $(\kappa\mu^{A_t})_{t\in T}$ decreases pointwise on $X$, and moreover
\begin{equation}\label{p}\kappa\mu^A(x)\leqslant\lim_{t}\,\kappa\mu^{A_t}(x)\text{ \ for all $x\in X$}.\end{equation}
The strong topology on $\mathcal E$ having a countable base of neighborhoods, it follows from (\ref{eq-cl}) that there is a subsequence $(\mu^{A_{t_j}})_{j\in\mathbb N}$ of the net $(\mu^{A_t})_{t\in T}$ that converges strongly (hence vaguely) to $\mu^A$. Applying \cite[Theorem~3.6]{Fu5} we therefore conclude that equality, in fact, prevails in (\ref{p}) for quasi all $x\in X$, which establishes (\ref{later-cl}). 

Let $A$ now be the intersection of a decreasing sequence $(A_t)_{t\in T}$ of quasiclosed sets. 
In view of the fact that a countable intersection of quasiclosed sets is likewise quasiclosed
\cite[Lemma~2.3]{F71}, the proof of (\ref{eq-cl}) and (\ref{later-cl}) is essentially the same as above, the only difference being in that of (\ref{bigcap}).
As $\mu_0\in\mathcal E^+_{A_t}$ (see above), each $(A_t)^c$ is locally $\mu_0$-neg\-lig\-ible.
Being thus a countable union of locally $\mu_0$-neg\-lig\-ible sets, $A^c$ is likewise $\mu_0$-neg\-lig\-ible \cite[Section~IV.5.2]{B3}, and hence (\ref{bigcap}) indeed holds.

\section{Outer balayage}\label{sec-outer}
The approach to balayage problems, utilized in the present paper, is mainly based on an analysis of convergence of inner swept measures and their potentials under the exhaustion of $A\subset X$ by $K$ compact. We shall now show that, under suitable topological assumptions on $X$ and $A$, this (typically inner) approach is still efficient when dealing with outer balayage problems.

\begin{definition}\label{def-bal-ou}$\mu^{*A}\in\mathcal E^+$ is said to be an {\it outer balayage\/} of $\mu\in\mathcal E^+$ to $A$ if
\begin{equation}\label{eq-bal-ou}\kappa\mu^{*A}=\inf_{\nu\in\Lambda_{A,\mu}^*}\,\kappa\nu\text{ \ on\ }X,\end{equation}
where
\[\Lambda_{A,\mu}^*:=\bigl\{\nu\in\mathcal E^+: \ \kappa\nu\geqslant\kappa\mu\text{ \ q.e.\ on $A$}\bigr\}.\]
\end{definition}

By the same proof as in Lemma~\ref{l-unique}, the outer balayage $\mu^{*A}$ is {\it unique\/} (if it exists). Observe also that this definition is in agreement with Cartan's (classical) concept of outer Newtonian balayage on $\mathbb R^n$, $n\geqslant3$, cf.\ \cite[Section~19, Theorem~1$'$]{Ca2}.

\begin{definition}[{\rm Fuglede \cite{F71}}]\label{def-quasi2} A set $B\subset X$ is said to {\it quasicontain\/} a set $A\subset X$ if $c^*(A\setminus B)=0$. Two sets $A,B\subset X$ are said to be {\it $c^*$-equ\-iv\-al\-ent\/} if $c^*(A\bigtriangleup B)=0$.\end{definition}

\begin{remark}\label{Q}If the outer balayage $\mu^{*A}$ exists, then so does $\mu^{*B}$ for any $B\subset X$ that is $c^*$-equ\-iv\-al\-ent to $A$, and moreover $\mu^{*A}=\mu^{*B}$. Indeed, then $\Lambda_{A,\mu}^*=\Lambda_{B,\mu}^*$, the outer capacity being countably subadditive on any subsets of $X$ \cite[Lemma~2.3.5]{F1}.\end{remark}

{\it Unless explicitly stated otherwise, throughout Sect.\/~{\rm\ref{sec-outer}} we shall tacitly assume that a l.c.\ space\/ $X$ is perfectly normal\/\footnote{By Urysohn's theorem \cite[Section~IX.1, Theorem~1]{B3}, a topological Hausdorff space $Y$ is said to be {\it normal\/} if for any two disjoint closed subsets $F_1,F_2$ of $Y$, there exist disjoint open sets $D_1,D_2$ such that $F_i\subset D_i$ $(i=1,2)$. Further, a normal space $Y$ is said to be {\it perfectly normal\/} \cite[Section~IX.4, Exercise~7]{B3} if each closed subset of $Y$ is a countable intersection of open sets (or, equivalently, if each open subset of $Y$ is a countable union of closed sets).}  and of class\/ $K_\sigma$, and that\/ $A\subset X$ is Borel.} Suppose as before that the kernel is perfect and satisfies the domination principle.

\subsection{Existence of the outer balayage. Alternative definitions}\label{sec-ou-alt}  Fix $\mu\in\mathcal E^+$.

\begin{theorem}\label{cor-outer} The outer balayage\/ $\mu^{*A}$, introduced by Definition\/~{\rm\ref{def-bal-ou}}, does exist, and moreover it coincides with the inner balayage\/ $\mu_*^A$, introduced by Definition\/~{\rm\ref{def-bal}}. Thus, by Theorem\/~{\rm\ref{th-bal-ex}},
\begin{equation}\label{proj-outer}\mu^{*A}=\mu^A_*=\mu_{\mathcal E_A'},\end{equation}
and hence the outer balayage\/ $\mu^{*A}$ can equivalently be determined by the two formulae
\begin{equation*}\label{proj-outer'}\mu^{*A}\in\mathcal E_A',\quad\|\mu-\mu^{*A}\|=\min_{\nu\in\mathcal E_A'}\,\|\mu-\nu\|=\inf_{\nu\in\mathcal E_A^+}\,\|\mu-\nu\|.\end{equation*}
Furthermore, $\mu^{*A}$ has the properties
\begin{align}
\kappa\mu^{*A}&=\kappa\mu\text{ \ q.e.\ on\ }A,\label{eq-outer}\\
\kappa\mu^{*A}&=\kappa\mu\text{ \ $\mu^{*A}$-a.e.,}\label{eq-outer1}\\
\kappa\mu^{*A}&\leqslant\kappa\mu\text{ \ on\ }X,\label{eq-outer2}
\end{align}
and it can equivalently be defined as the only measure in\/ $\mathcal E_A'$ satisfying\/ {\rm(\ref{eq-outer})}.
\end{theorem}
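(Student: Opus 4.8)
The plan is to reduce everything to the already-established theory of inner balayage by showing that, under the present hypotheses, the inner balayage $\mu^A_*$ of Definition~\ref{def-bal} already serves as the outer balayage $\mu^{*A}$. Observe first that since $c_*(\cdot)\leqslant c^*(\cdot)$, any set of outer capacity zero also has inner capacity zero; hence the constraint ``$\kappa\nu\geqslant\kappa\mu$ q.e.\ on $A$'' is stronger than ``$\kappa\nu\geqslant\kappa\mu$ n.e.\ on $A$'', giving the inclusion $\Lambda^*_{A,\mu}\subseteq\Lambda_{A,\mu}$. The whole difficulty therefore lies in showing that $\mu^A_*$ itself belongs to the smaller class $\Lambda^*_{A,\mu}$; once this is known, the minimality of $\kappa\mu^A_*$ over the larger class $\Lambda_{A,\mu}$ (Corollary~\ref{cor-def}) will immediately force $\kappa\mu^A_*=\inf_{\nu\in\Lambda^*_{A,\mu}}\kappa\nu$ on $X$, which is exactly Definition~\ref{def-bal-ou}.

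First I would sharpen the n.e.-relation (\ref{eq-bala-f1}) to a q.e.-relation. By (\ref{eq-bala-f1}) and (\ref{eq-bala-f2}), the exceptional set
\[N:=\{x\in A:\ \kappa\mu^A_*(x)<\kappa\mu(x)\}\]
satisfies $c_*(N)=0$, and $N$ is Borel because $\kappa\mu-\kappa\mu^A_*$, being a difference of two l.s.c.\ functions, is Borel measurable while $A$ is Borel by assumption. The key step --- and the main obstacle --- is to upgrade $c_*(N)=0$ to $c^*(N)=0$. This is precisely where the topological hypotheses on $X$ (perfect normality and class $K_\sigma$) become indispensable: they guarantee that every Borel subset of $X$ is capacitable, i.e.\ $c_*(N)=c^*(N)$ (a Choquet-type capacitability result for the outer capacity $c^*$, this being the point at which Fuglede's original restriction to quasiclosed $A$ is removed). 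Granting this, $c^*(N)=c_*(N)=0$, so that
\[\kappa\mu^A_*=\kappa\mu\quad\text{q.e.\ on }A,\]
which is (\ref{eq-outer}); in particular $\mu^A_*\in\Lambda^*_{A,\mu}$.

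With this in hand the identification is immediate. For any $\nu\in\Lambda^*_{A,\mu}\subseteq\Lambda_{A,\mu}$, Corollary~\ref{cor-def} gives $\kappa\mu^A_*\leqslant\kappa\nu$ on $X$; combined with $\mu^A_*\in\Lambda^*_{A,\mu}$ this yields $\kappa\mu^A_*=\inf_{\nu\in\Lambda^*_{A,\mu}}\kappa\nu$ on $X$, so the outer balayage exists and equals $\mu^A_*$, establishing (\ref{proj-outer}) via Theorem~\ref{th-bal-ex}. Properties (\ref{eq-outer1}) and (\ref{eq-outer2}) then follow at once from (\ref{eq-bala-f0}) and (\ref{eq-bala-f2}) through this identification. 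For the final uniqueness assertion, if $\theta\in\mathcal E'_A$ satisfies (\ref{eq-outer}), then a fortiori $\kappa\theta=\kappa\mu$ n.e.\ on $A$ (once more because outer-null sets are inner-null), and so $\theta=\mu^A_*=\mu^{*A}$ by the characterization of the inner balayage within $\mathcal E'_A$ by (\ref{eq-bala-f1}) in Theorem~\ref{th-bal-ex}.
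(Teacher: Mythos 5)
Your proof is correct and follows essentially the same route as the paper: the paper likewise reduces the outer problem to the inner one by invoking the capacitability of Borel sets in a perfectly normal $K_\sigma$ space (its Theorem~\ref{l-top}, from Fuglede's capacitability theorem) to pass from n.e.\ to q.e.\ statements. The only cosmetic difference is that the paper proves the full identity $\Lambda_{A,\mu}=\Lambda^*_{A,\mu}$ by applying capacitability to an arbitrary $\nu\in\Lambda_{A,\mu}$, whereas you apply it only to $\mu^A_*$ and use the inclusion $\Lambda^*_{A,\mu}\subseteq\Lambda_{A,\mu}$ --- both are equally valid.
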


\begin{corollary}\label{cor-def11}Variational problem\/ {\rm(\ref{eq-bal-ou})} on minimizing the potential\/ $\kappa\nu$ among the measures\/ $\nu\in\Lambda_{A,\mu}^*$ has the unique solution\/ $\mu^{*A}$, given by Theorem\/~{\rm\ref{cor-outer}}.
\end{corollary}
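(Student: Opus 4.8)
The plan is to mirror the argument already used for the inner analogue in Corollary~\ref{cor-def}, since Theorem~\ref{cor-outer} has carried out all the substantive work. The statement of Corollary~\ref{cor-def11} has two components: first, that the infimum in the defining relation~(\ref{eq-bal-ou}) is actually \emph{attained} (i.e.\ it is a genuine minimum rather than merely an infimum), and second, that the minimizer exists uniquely and coincides with the outer balayage $\mu^{*A}$ furnished by Theorem~\ref{cor-outer}. Both will follow almost formally once the right properties of $\mu^{*A}$ are quoted.

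First I would recall from Theorem~\ref{cor-outer} that the outer balayage $\mu^{*A}$ exists and, by Definition~\ref{def-bal-ou}, satisfies $\kappa\mu^{*A}=\inf_{\nu\in\Lambda_{A,\mu}^*}\kappa\nu$ everywhere on $X$; thus the value of the infimum is already identified, and it remains only to exhibit a competing measure at which it is achieved. The natural candidate is $\mu^{*A}$ itself, so the key step is to verify the membership $\mu^{*A}\in\Lambda_{A,\mu}^*$. This is immediate from property~(\ref{eq-outer}): since $\kappa\mu^{*A}=\kappa\mu$ q.e.\ on $A$, a fortiori $\kappa\mu^{*A}\geqslant\kappa\mu$ q.e.\ on $A$, which is precisely the defining requirement for $\Lambda_{A,\mu}^*$. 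Hence $\mu^{*A}\in\Lambda_{A,\mu}^*$, and the infimum in~(\ref{eq-bal-ou}) is attained at $\mu^{*A}$, so it is indeed an actual minimum. For uniqueness I would appeal to the remark made just after Definition~\ref{def-bal-ou}, namely that the outer balayage is unique by the same reasoning as in Lemma~\ref{l-unique}: any two measures solving~(\ref{eq-bal-ou}) would have potentials agreeing (and finite) q.e.\ on $X$, whence \cite[Lemma~3.2.1(a)]{F1} yields zero energy-norm difference and the energy principle forces them to coincide.

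I expect there to be essentially no obstacle here; this is a direct corollary of Theorem~\ref{cor-outer}, in complete parallel with how Corollary~\ref{cor-def} is deduced from Theorem~\ref{th-bal-ex}. The only point deserving a moment's care is the membership step, which relies on~(\ref{eq-outer}) being a q.e.\ \emph{equality}, so that it implies a q.e.\ inequality of the correct direction ($\geqslant$); this is exactly what places $\mu^{*A}$ in the admissible class. All of the genuine difficulty—the existence of $\mu^{*A}$, its identification with the inner balayage $\mu^A_*=\mu_{\mathcal E_A'}$, and the validity of~(\ref{eq-outer})–(\ref{eq-outer2})—is already encapsulated in Theorem~\ref{cor-outer}, which in turn rests on the perfect-normality and $K_\sigma$ hypotheses on $X$ together with the Borel assumption on $A$.
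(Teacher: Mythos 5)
Your argument is correct and is essentially identical to the paper's own proof: both invoke Theorem~\ref{cor-outer} to identify the value of the infimum in~(\ref{eq-bal-ou}), use the q.e.\ equality~(\ref{eq-outer}) to place $\mu^{*A}$ in $\Lambda^*_{A,\mu}$ so that the infimum is attained, and settle uniqueness by the Lemma~\ref{l-unique}-type energy argument noted after Definition~\ref{def-bal-ou}. No gaps.
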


\begin{proof} According to Theorem~\ref{cor-outer}, the infimum in (\ref{eq-bal-ou}) is achieved at the (unique) measure $\mu^{*A}$, determined for instance by (\ref{proj-outer}), and moreover $\mu^{*A}\in\Lambda^*_{A,\mu}$, by (\ref{eq-outer}). Hence the infimum in (\ref{eq-bal-ou}) is indeed an actual minimum.
\end{proof}

\begin{corollary}\label{cor-def2}
Assume that a\/ {\rm(}Borel\/{\rm)} set\/ $A$ is quasiclosed\/ {\rm(}or, more generally, that\/ $\mathcal E^+_A$ is strongly closed\/{\rm)}. Then the outer balayage\/ $\mu^{*A}$ is actually the orthogonal projection of\/ $\mu$ onto the cone\/ $\mathcal E^+_A$, i.e.
\[\mu^{*A}=\mu_{\mathcal E^+_A}.\]
Alternatively, $\mu^{*A}$ can be found as the only measure in\/ $\mathcal E^+_A$ having property\/~{\rm(\ref{eq-outer})}.
\end{corollary}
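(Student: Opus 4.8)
The plan is to derive this as a direct consequence of Theorem~\ref{cor-outer} together with the identification $\mathcal E^+_A=\mathcal E'_A$, exactly paralleling the passage from Theorem~\ref{th-bal-ex} to Corollary~\ref{cor-def1} on the inner side. The only genuine content is recognizing that the two hypotheses offered (quasiclosedness of $A$, or strong closedness of $\mathcal E^+_A$) both collapse the strong closure $\mathcal E'_A$ back onto $\mathcal E^+_A$, after which everything is substitution.

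First I would record that under either hypothesis one has $\mathcal E^+_A=\mathcal E'_A$. In the more general case this is immediate: $\mathcal E'_A$ is \emph{defined} as the strong closure of $\mathcal E^+_A$, so if $\mathcal E^+_A$ is already strongly closed, it coincides with its own closure. The quasiclosed case reduces to this one, since Lemma~\ref{l-quasi} asserts precisely that quasiclosedness of $A$ forces $\mathcal E^+_A$ to be strongly closed (and in fact states the identity $\mathcal E^+_A=\mathcal E'_A$ outright in Eq.~(\ref{eq-id0})).

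Next I would invoke Theorem~\ref{cor-outer}, which already establishes existence of $\mu^{*A}$ and the equality $\mu^{*A}=\mu_{\mathcal E'_A}$ from (\ref{proj-outer}). Substituting $\mathcal E'_A=\mathcal E^+_A$ into this equality yields $\mu_{\mathcal E'_A}=\mu_{\mathcal E^+_A}$ and hence the desired formula $\mu^{*A}=\mu_{\mathcal E^+_A}$. For the alternative characterization, Theorem~\ref{cor-outer} also states that $\mu^{*A}$ is the unique measure in $\mathcal E'_A$ satisfying~(\ref{eq-outer}); replacing $\mathcal E'_A$ by $\mathcal E^+_A$ in that characterization gives at once that $\mu^{*A}$ is the only measure in $\mathcal E^+_A$ with property~(\ref{eq-outer}).

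There is no real obstacle here: this is a bookkeeping corollary, and all the analytic work has been carried out upstream (the existence and projection identity in Theorem~\ref{cor-outer}, the closedness in Lemma~\ref{l-quasi}). The one point worth stating explicitly, rather than leaving to the reader, is why the orthogonal projection $\mu_{\mathcal E^+_A}$ is even well defined under the stated hypotheses — but this too is covered by Theorem~\ref{th-proj}, since $\mathcal E^+_A$ is then a strongly closed, convex subset (indeed a cone) of the strongly complete cone $\mathcal E^+$.
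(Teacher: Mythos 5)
Your proposal is correct and follows essentially the same route as the paper, which likewise invokes Lemma~\ref{l-quasi} (or the strong closedness hypothesis) to identify $\mathcal E^+_A$ with $\mathcal E'_A$ and then substitutes this into Theorem~\ref{cor-outer}, noting via Theorem~\ref{th-proj} that the projection $\mu_{\mathcal E^+_A}$ exists. No gaps.
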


\begin{remark}\label{rem-quasi}If $A$ is quasiclosed while $X$ arbitrary, the existence of $\mu^{*A}\in\mathcal E^+_A$, determined uniquely within $\mathcal E^+_A$ by $\kappa\mu^{*A}=\kappa\mu$ q.e.\ on $A$, has been established before by Fuglede \cite[Theorem~4.12]{Fu5}. Theorem~\ref{cor-outer} shows that this result, suitably modified, remains valid for any Borel subset $A$ of a perfectly normal, l.c.\ space $X$ of class $K_\sigma$, thereby presenting a further development of Fuglede's theory on outer balayage.
See also Theorems~\ref{th-ou-fr}, \ref{th-outer-cont}, and \ref{th-outer-further} below providing some additional properties of $\mu^{*A}$, which seem to be new in part even for quasiclosed $A$.
\end{remark}

Assume now that $\mu\in\mathcal E^+$ is bounded. For a given (Borel) set $A\subset X$, denote by $\check{\Lambda}^*_{A,\mu}$ the convex, truncated cone of all $\nu\in\Lambda_{A,\mu}^*$ having property (\ref{leq}).

\begin{theorem}\label{th-ou-fr}Suppose that\/ $\mu\in\mathcal E^+$ is bounded while\/ $\kappa$ satisfies Frostman's maximum principle. Then Definition\/~{\rm\ref{def-bal-ou}} and that obtained from it by replacing\/ $\Lambda^*_{A,\mu}$ by\/ $\check{\Lambda}^*_{A,\mu}$ lead to the same concept of outer balayage.
Furthermore, Theorem\/~{\rm\ref{cor-outer}} and Corollaries\/~{\rm\ref{cor-def11}} and\/ {\rm\ref{cor-def2}} remain valid if\/ $\Lambda^*_{A,\mu}$, $\mathcal E^+_A$, and\/ $\mathcal E'_A$ are replaced throughout by\/ $\check{\Lambda}^*_{A,\mu}$, $\check{\mathcal E}^+_{A,\mu}$, and\/ $\check{\mathcal E}'_{A,\mu}$, respectively.\footnote{For the notations $\check{\mathcal E}^+_{A,\mu}$ and $\check{\mathcal E}'_{A,\mu}$, see the paragraph followed by Theorem~\ref{th-fr}. Also note that Theorem~\ref{th-ou-fr} still holds if (\ref{leq}) is weakened to $\nu(X)\leqslant q\mu(X)$, where $q\in[1,\infty)$ (cf.\ Remark~\ref{weakened}).}\end{theorem}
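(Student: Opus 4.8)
The plan is to deduce this entirely from the inner-balayage machinery by exploiting the coincidence $\mu^{*A}=\mu^A_*$ furnished by Theorem~\ref{cor-outer}. Throughout I retain the standing hypotheses of Sect.~\ref{sec-outer} (so $X$ is perfectly normal and of class $K_\sigma$ and $A$ is Borel) together with the extra assumptions that $\mu$ is bounded and that $\kappa$ satisfies Frostman's maximum principle. The decisive observation is that the truncated classes $\check{\mathcal E}^+_{A,\mu}$ and $\check{\mathcal E}'_{A,\mu}$ occurring in the statement are precisely those used in Theorem~\ref{th-fr}; they carry no reference to outer capacity, so once $\mu^{*A}$ is identified with $\mu^A$, the projection formulas transfer verbatim and only the variational reformulation needs a separate (and short) argument.

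First I would record, via Theorem~\ref{cor-outer}, the identity $\mu^{*A}=\mu^A=\mu_{\mathcal E'_A}$, and then invoke Proposition~\ref{cor-mass} (whose proof uses Frostman's principle) to obtain $\mu^{*A}(X)=\mu^A(X)\leqslant\mu(X)$. Since $\mu^{*A}\in\Lambda^*_{A,\mu}$ by (\ref{eq-outer}), this mass bound places $\mu^{*A}$ in the truncated class:
\[\mu^{*A}\in\check{\Lambda}^*_{A,\mu}.\]
To show that Definition~\ref{def-bal-ou} is unaffected when $\Lambda^*_{A,\mu}$ is replaced by $\check{\Lambda}^*_{A,\mu}$, I would run the same sandwich argument as in the passage around (\ref{in2}) of Sect.~\ref{sec-pr-fr}. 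Using $\check{\Lambda}^*_{A,\mu}\subset\Lambda^*_{A,\mu}$ together with the membership just established, one gets on $X$
\[\inf_{\nu\in\check{\Lambda}^*_{A,\mu}}\,\kappa\nu\leqslant\kappa\mu^{*A}=\inf_{\nu\in\Lambda^*_{A,\mu}}\,\kappa\nu\leqslant\inf_{\nu\in\check{\Lambda}^*_{A,\mu}}\,\kappa\nu,\]
so equality prevails throughout and both definitions produce the same measure $\mu^{*A}$, the infimum being an actual minimum by Corollary~\ref{cor-def11}.

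Finally, for the claim that Theorem~\ref{cor-outer} and Corollaries~\ref{cor-def11} and~\ref{cor-def2} survive the substitution, I would compose the identity $\mu^{*A}=\mu^A$ with the corresponding truncated assertions of Theorem~\ref{th-fr}. Concretely, (\ref{eq-balu1}) gives $\mu^A=\mu_{\check{\mathcal E}'_{A,\mu}}$, whence $\mu^{*A}=\mu_{\check{\mathcal E}'_{A,\mu}}$; and when the (Borel) set $A$ is quasiclosed, (\ref{eq-balu2}) gives $\mu^A=\mu_{\check{\mathcal E}^+_{A,\mu}}$, whence $\mu^{*A}=\mu_{\check{\mathcal E}^+_{A,\mu}}$, which is the truncated form of Corollary~\ref{cor-def2}. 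I do not anticipate a genuine obstacle. The only point demanding attention is that the constraint defining $\check{\Lambda}^*_{A,\mu}$ is phrased with q.e.\ rather than with n.e.\ (as for $\check{\Lambda}_{A,\mu}$), but this distinction never enters the argument: the variational equality is obtained directly by the sandwich above, while the projection formulas are imported through the cones $\check{\mathcal E}^+_{A,\mu}$ and $\check{\mathcal E}'_{A,\mu}$, which are common to the inner and outer settings. The same reduction, applied with Remark~\ref{weakened} in place of Theorem~\ref{th-fr}, yields at once the weakening of (\ref{leq}) to $\nu(X)\leqslant q\mu(X)$ recorded in the footnote.
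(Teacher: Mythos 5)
Your proposal is correct and follows essentially the same route as the paper: both reduce the outer statement to the inner one via the identity $\mu^{*A}=\mu^A_*$ from Theorem~\ref{cor-outer}, use Proposition~\ref{cor-mass} to place $\mu^{*A}$ in the truncated classes $\check{\Lambda}^*_{A,\mu}$ and $\check{\mathcal E}'_{A,\mu}$, and then rerun (or, as you do, directly import) the sandwich and projection arguments of Sect.~\ref{sec-pr-fr} and Theorem~\ref{th-fr}. The only cosmetic difference is that the paper passes through the equality $\Lambda_{A,\mu}=\Lambda^*_{A,\mu}$ of (\ref{theta}) to transfer membership from $\check{\Lambda}_{A,\mu}$ to $\check{\Lambda}^*_{A,\mu}$, whereas you obtain it directly from (\ref{eq-outer}) and the mass bound; these are equivalent.
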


These results, as well as those in Sect.~\ref{sec-ou-further},
can easily be derived from the theory of inner balayage, developed in Sects.~\ref{sec-inner}--\ref{proofs2} above. For the sake of completeness, we shall nevertheless sketch their proofs.

\subsection{Proofs of Theorems~\ref{cor-outer} and \ref{th-ou-fr} and Corollary~\ref{cor-def2}} The analysis given below is based substantially on the following theorem on capacitability, obtained by a direct application of \cite[Theorem~4.5]{F1}.

\begin{theorem}\label{l-top}Any Borel subset of a perfectly normal, l.c.\ space\/ $X$ of class\/ $K_\sigma$, endowed with a perfect kernel\/ $\kappa$, is capacitable.
\end{theorem}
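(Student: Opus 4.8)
The plan is to reduce the capacitability of an arbitrary Borel set to Fuglede's capacitability theorem \cite[Theorem~4.5]{F1}, which (in Fuglede's framework) asserts that every $K$-analytic (or similarly ``analytic'') set is capacitable, provided the kernel is such that the inner and outer capacities enjoy the requisite regularity. The key point is purely topological: under the standing hypotheses that $X$ is perfectly normal and of class $K_\sigma$, every Borel subset of $X$ falls into the class covered by that theorem. So the real content is to verify that the topological assumptions on $X$ place all Borel sets inside the scope of \cite[Theorem~4.5]{F1}.

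First I would recall what Fuglede's theorem requires of the set: typically it applies to sets that are $K$-analytic, i.e.\ continuous (or upper semicontinuous with compact values) images of a Polish-type index space, or more concretely to members of the $\sigma$-algebra/Souslin operation generated by the compact sets. The standing assumption $X \in K_\sigma$ means $X = \bigcup_{n} K_n$ with each $K_n$ compact, so $X$ itself is $\sigma$-compact; combined with perfect normality, every closed subset of $X$ is a $G_\delta$ (a countable intersection of open sets), and dually every open subset is an $F_\sigma$ (a countable union of closed sets). Since every open set is $F_\sigma$ and $X$ is $\sigma$-compact, every open set is in fact $K_\sigma$ as well, hence so is every closed set (being $G_\delta$ inside a $K_\sigma$ space, one checks closed $\cap K_n$ is compact). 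The plan is then to run the standard bootstrapping: starting from the compact sets, the operations of countable union and countable intersection available through the $F_\sigma$/$G_\delta$ structure generate the full Borel $\sigma$-algebra, and each such set remains analytic (the analytic sets being stable under countable unions and countable intersections, and containing all closed and all open sets of a $K_\sigma$ space).

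Concretely, I would argue that under perfect normality and $K_\sigma$, every Borel set is $K$-analytic, so that \cite[Theorem~4.5]{F1} applies verbatim. The cleanest route is to show the class of $K$-analytic subsets of $X$ is a $\sigma$-algebra containing the open sets: it contains all open sets (each open set is $F_\sigma$, hence a countable union of compacts-within-closed, hence analytic), it is closed under countable unions and intersections (a general property of analytic sets), and—this is where perfect normality is essential—it is closed under complementation on the generators, because the complement of an open set is closed, and in a perfectly normal $K_\sigma$ space a closed set is again $K_\sigma$ and therefore analytic. Once one has that both a Borel set and its complement are analytic, capacitability follows from \cite[Theorem~4.5]{F1} applied to the set itself.

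The main obstacle I anticipate is matching the precise hypotheses of \cite[Theorem~4.5]{F1} to the present setting: Fuglede's capacitability theorem is phrased for a specific regularity class (analytic/$K$-analytic sets) under a specific capacity, and one must confirm (i) that the outer capacity $c^*$ used here coincides with the set function to which that theorem applies, and (ii) that perfect normality genuinely upgrades Borel sets to that analytic class rather than merely to a Baire-type class. The delicate implication is that \emph{closed} sets are $K_\sigma$ (not just $G_\delta$): this uses that a closed subset $F$ of a $K_\sigma$ space satisfies $F = \bigcup_n (F \cap K_n)$ with each $F \cap K_n$ compact, so $F$ is automatically $\sigma$-compact, and perfect normality is what makes open sets $F_\sigma$ so that the Borel hierarchy is generated entirely within the analytic class. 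Verifying that these topological facts combine to place \emph{all} Borel sets—not just a proper subfamily—inside the reach of \cite[Theorem~4.5]{F1} is the crux; the capacity-theoretic conclusion $c_*(A) = c^*(A)$ is then immediate from the quoted theorem.
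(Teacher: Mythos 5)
Your proposal follows the paper's route exactly: the paper's entire proof of this theorem is the citation of \cite[Theorem~4.5]{F1}, and the topological reduction you spell out (perfect normality makes open sets $F_\sigma$, the $K_\sigma$ property makes closed sets countable unions of compacta, hence every Borel set is $K$-analytic and Fuglede's capacitability theorem applies) is precisely what the paper's phrase ``direct application'' leaves implicit. One small imprecision worth fixing: the class of $K$-analytic sets is not itself a $\sigma$-algebra (it is not closed under complementation in general); the correct bootstrap is that the sets $A$ for which both $A$ and $A^c$ are $K$-analytic form a $\sigma$-algebra containing the open sets, which is what your closing formulation in effect uses.
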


This enables us to show that for any $\mu\in\mathcal E^+$ and any Borel $A\subset X$, the classes $\Lambda_{A,\mu}$ and $\Lambda^*_{A,\mu}$, appearing in Definitions~\ref{def-bal} and \ref{def-bal-ou} of inner and outer balayage, respectively, coincide:
\begin{equation}\label{theta}\Lambda_{A,\mu}=\Lambda^*_{A,\mu}.\end{equation}
To this end, it is enough to verify that for any $\nu\in\mathcal E^+$ with $\kappa\nu\geqslant\kappa\mu$ n.e.\ on $A$, the same inequality holds q.e.\ on $A$. Being the potential of a (signed) measure of finite energy, $\kappa(\nu-\mu)$ is well defined and finite q.e.\ on $X$ \cite[Corollary to Lemma~3.2.3]{F1}, and it is Borel measurable. Applying Theorem~\ref{l-top} to $E:=A\cap\{x\in X:\ \kappa\nu(x)<\kappa\mu(x)\}$,
we therefore get $c^*(E)=c_*(E)=0$, and (\ref{theta}) follows.

It follows directly from (\ref{theta}) that the inner balayage $\mu^A_*$ (whose existence was justified by Theorem~\ref{th-bal-ex}) gives actually a (unique) solution to the problem on the existence of outer balayage of $\mu$ to $A$:
\begin{equation}\label{11}\mu_*^A=\mu^{*A}.\end{equation}
When substituted into (\ref{proj''}), (\ref{eq-bala-f0}), and (\ref{eq-bala-f2}), this results in (\ref{proj-outer}), (\ref{eq-outer1}), and (\ref{eq-outer2}), respectively.

Applying now Theorem~\ref{l-top} to the (Borel) set $A\cap\{x\in X:\ \kappa\mu_*^A(x)<\kappa\mu(x)\}$, we infer from (\ref{11}) and (\ref{eq-bala-f1}) that
\begin{equation*}\label{111}\kappa\mu^{*A}=\kappa\mu_*^A=\kappa\mu\text{ \ n.e.\ (hence q.e.) on $A$},\end{equation*}
which proves (\ref{eq-outer}). Moreover, $\mu^{*A}$ is the only measure in $\mathcal E_A'$ satisfying (\ref{eq-outer}), for (\ref{eq-bala-f1}) characterizes $\mu_*^A$ uniquely within $\mathcal E_A'$ (see Theorem~\ref{th-bal-ex}).

Having thus verified Theorem~\ref{cor-outer}, assume now that the (Borel) set $A$ is quasiclosed. According to Lemma~\ref{l-quasi}, the convex cone $\mathcal E^+_A$ then coincides with its strong closure $\mathcal E_A'$; hence, the orthogonal projection $\mu_{\mathcal E_A^+}$ exists (Theorem~\ref{th-proj}), and moreover $\mu_{\mathcal E_A^+}=\mu_{\mathcal E_A'}$. Substituting this into Theorem~\ref{cor-outer} yields Corollary~\ref{cor-def2}.

Returning again to arbitrary Borel $A$, suppose finally that $\mu$ is bounded while the kernel satisfies Frostman's maximum principle. Then the inner balayage does not increase the total mass of a measure (Proposition~\ref{cor-mass}), and we have thus been led to (\ref{in1}) and (\ref{in2}). Combining these two with (\ref{theta}) and (\ref{11}) shows that the outer balayage $\mu^{*A}$ belongs, in fact, to both $\check{\mathcal E}_{A,\mu}'$ and $\check{\Lambda}^*_{A,\mu}$, which establishes Theorem~\ref{th-ou-fr} in the same manner as it did in  Sect.~\ref{sec-pr-fr}.

\subsection{Further properties of outer balayage. Convergence assertions}\label{sec-ou-further} Recall that we require the space $X$ to be perfectly normal and of class $K_\sigma$.

\begin{theorem}\label{th-outer-cont} For any\/ $\mu\in\mathcal E^+$, the following assertions\/
{\rm(a)}--{\rm(c)} on  convergence of outer swept measures and their potentials hold true.
\begin{itemize}\item[\rm(a)]For\/ $A$ Borel, the following two limit relations hold when\/ $K\uparrow A$:
\begin{align*}&\mu^{*K}\to\mu^{*A}\text{ \ strongly and vaguely in $\mathcal E^+$},\\
&\kappa\mu^{*K}\uparrow\kappa\mu^{*A}\text{ \ pointwise on\/ $X$}.\end{align*}
\item[\rm(b)]If\/ $A$ is the union of an increasing sequence\/ $(A_k)$ of Borel sets, then
\begin{align*}&\mu^{*A_k}\to\mu^{*A}\text{ \ strongly and vaguely in $\mathcal E^+$},\\
&\kappa\mu^{*A_k}\uparrow\kappa\mu^{*A}\text{ \ pointwise on\/ $X$}.
\end{align*}
\item[\rm(c)]If\/ $A$ is the intersection of a lower directed family\/ {\rm(}resp.\ a decreasing sequence\/{\rm)} $(A_t)$ of closed\/ {\rm(}resp.\ quasiclosed and Borel\/{\rm)} sets, then
\begin{align*}&\mu^{*A_t}\to\mu^{*A}\text{ \ strongly and vaguely in $\mathcal E^+$},\\
&\kappa\mu^{*A_t}\downarrow\kappa\mu^{*A}\text{ \ pointwise q.e.\ on\/ $X$}.
\end{align*}
\end{itemize}
\end{theorem}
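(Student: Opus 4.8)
The plan is to reduce all three assertions to the corresponding convergence theorems for \emph{inner} balayage (Theorems~\ref{th-bal-cont}, \ref{th-cont-bor1}, and \ref{th-cont-bor2}) by means of the identification $\mu^{*B}=\mu_*^B$, valid for every Borel $B\subset X$ under the standing assumptions on $X$ (Theorem~\ref{cor-outer}). Once every outer swept measure appearing in (a)--(c) has been rewritten as the corresponding inner swept measure, each statement becomes a verbatim instance of an already proved inner convergence result, so that no fresh analytic input is needed; the work lies entirely in checking that the sets involved are Borel (so that Theorem~\ref{cor-outer} is available) and that they satisfy the measurability or topological hypotheses of the relevant inner theorem.

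For (a), every compact $K\subset A$ is closed, hence Borel, so Theorem~\ref{cor-outer} gives $\mu^{*K}=\mu_*^K$ and $\mu^{*A}=\mu_*^A$; substituting these into Theorem~\ref{th-bal-cont} yields both the strong and vague convergence $\mu^{*K}\to\mu^{*A}$ and the monotone pointwise convergence $\kappa\mu^{*K}\uparrow\kappa\mu^{*A}$ as $K\uparrow A$. For (b), the sets $A_k$ and their union $A$ are Borel, hence universally measurable; applying Theorem~\ref{cor-outer} to each of them replaces $\mu^{*A_k}$ and $\mu^{*A}$ by $\mu_*^{A_k}$ and $\mu_*^A$, whereupon Theorem~\ref{th-cont-bor1} delivers (b) directly.

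For (c), in the closed case the intersection $A=\bigcap_t A_t$ is again closed, hence Borel, while in the quasiclosed case $A$ is a countable intersection of Borel sets and so is Borel as well; in both cases each $A_t$ is Borel, so Theorem~\ref{cor-outer} identifies all the balayages in sight with inner ones. The two alternatives in (c)---a lower directed family of closed sets, respectively a decreasing sequence of quasiclosed (Borel) sets---match precisely the decreasing-net and decreasing-sequence cases of Theorem~\ref{th-cont-bor2}, which then yields $\mu^{*A_t}\to\mu^{*A}$ strongly and vaguely together with $\kappa\mu^{*A_t}\downarrow\kappa\mu^{*A}$ pointwise q.e.\ on $X$. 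The only point demanding attention throughout is this measurability bookkeeping: one must verify in each regime that $\mu^{*(\cdot)}$ is applied solely to Borel sets, so that Theorem~\ref{cor-outer} applies, and that the structure of the approximating family is exactly the one required by the inner theorem invoked. This is the main (though routine) obstacle; once it is dispatched, (a)--(c) follow immediately.
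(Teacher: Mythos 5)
Your proposal is correct and follows essentially the same route as the paper, whose entire proof consists of substituting the identification $\mu^{*A}=\mu^A_*=\mu_{\mathcal E_A'}$ from Theorem~\ref{cor-outer} into Theorems~\ref{th-bal-cont}--\ref{th-cont-bor2}. The additional measurability bookkeeping you carry out (checking that compact sets, countable unions, and the intersections in (c) are Borel so that Theorem~\ref{cor-outer} applies) is implicit in the paper but correctly handled in your write-up.
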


\begin{proof}This follows by substituting (\ref{proj-outer}) into Theorems~\ref{th-bal-cont}--\ref{th-cont-bor2}.\end{proof}

\begin{theorem}\label{th-outer-further}Given Borel\/ $A,Q\subset X$ and\/ $\mu\in\mathcal E^+$, the following\/ {\rm(d)--(g)} hold.
\begin{itemize}\item[\rm(d)] {\rm (Monotonicity property)} If\/ $A\subset Q$, then\/
\[\kappa\mu^{*A}\leqslant\kappa\mu^{*Q}.\]
\item[\rm(e)] {\rm (Balayage with a rest)} If\/ $A\subset Q$, then\/
\[\mu^{*A}=(\mu^{*Q})^{*A}.\]
\item[\rm(f)] If\/ $\mu\in\mathcal E'_A$ {\rm(}thus in particular if\/ $\mu\in\mathcal E^+_A${\rm)}, then\/
\[\mu^{*A}=\mu.\]
\item[\rm(g)] Assume Frostman's maximum principle holds. Then\/
\begin{equation}\label{eq-mass20}\mu^{*A}(X)\leqslant\mu(X).\end{equation}
If moreover\/ $c^*(A)<\infty$, then actually
\begin{equation}\label{eq-mass2}\mu^{*A}(X)=\int\kappa\gamma^*_A\,d\mu,\end{equation}
$\gamma^*_A$ being the outer equilibrium measure for\/ $A$.
\end{itemize}
\end{theorem}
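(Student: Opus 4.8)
The plan is to reduce every one of the assertions (d)--(g) to the corresponding statement about \emph{inner} balayage already proved in Sections~\ref{sec-inner}--\ref{sec-further}, exploiting the identity
\[\mu^{*A}=\mu^A_*=\mu_{\mathcal E_A'}\]
supplied by Theorem~\ref{cor-outer}. This identity is available in the present setting precisely because $X$ is perfectly normal and of class $K_\sigma$, the kernel is perfect, and $A,Q$ are Borel. Since $\mu^{*A}$ and $\mu^A_*$ are literally the same measure, each of (d)--(g) becomes a transcription of an inner-balayage result, and no fresh estimates are needed.

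For (d), I would simply write $\kappa\mu^{*A}=\kappa\mu^A_*\leqslant\kappa\mu^Q_*=\kappa\mu^{*Q}$, the middle inequality being the monotonicity of inner balayage (Proposition~\ref{cor-mon}). For (f), the same identity together with Proposition~\ref{fixx} gives $\mu^{*A}=\mu^A_*=\mu$ whenever $\mu\in\mathcal E'_A$. Part (e) requires one extra observation: the measure $\mu^{*Q}=\mu^Q_*$ again lies in $\mathcal E^+$, so Theorem~\ref{cor-outer} applies to it as well and yields $(\mu^{*Q})^{*A}=(\mu^Q_*)^A_*$; the balayage-with-a-rest property for inner balayage (Proposition~\ref{cor-rest}, applicable since $A\subset Q$) then gives $(\mu^Q_*)^A_*=\mu^A_*=\mu^{*A}$, as desired.

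The inequality (\ref{eq-mass20}) in (g) follows at once from the principle of positivity of mass for inner balayage (Proposition~\ref{cor-mass}), since $\mu^{*A}(X)=\mu^A_*(X)\leqslant\mu(X)$; note that Frostman's maximum principle, assumed in (g), is exactly the hypothesis under which Proposition~\ref{cor-mass} was established. The one point demanding care --- and the only genuine obstacle --- is the mass formula (\ref{eq-mass2}), because Proposition~\ref{th-tot} yields $\mu^A_*(X)=\int\kappa\gamma_A\,d\mu$ in terms of the \emph{inner} equilibrium measure $\gamma_A$ under the hypothesis $c_*(A)<\infty$, whereas (\ref{eq-mass2}) is stated with the \emph{outer} equilibrium measure $\gamma^*_A$ under $c^*(A)<\infty$. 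To bridge this gap I would invoke Theorem~\ref{l-top}: the Borel set $A$ is capacitable, so $c_*(A)=c^*(A)$, whence the two finiteness conditions coincide; and for a capacitable set of finite capacity the inner equilibrium measure doubles as the outer one, $\gamma_A=\gamma^*_A$ (Section~\ref{sec11}). Substituting these two identifications into Proposition~\ref{th-tot} turns its formula into (\ref{eq-mass2}) and completes the proof.
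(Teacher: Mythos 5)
Your proposal is correct and follows essentially the same route as the paper: every part is reduced via the identity $\mu^{*A}=\mu^A_*=\mu_{\mathcal E_A'}$ of Theorem~\ref{cor-outer} to the corresponding inner-balayage result (Propositions~\ref{cor-mon}--\ref{cor-mass} and \ref{th-tot}), with the mass formula handled exactly as in the paper by invoking capacitability (Theorem~\ref{l-top}) to identify $\gamma_A$ with $\gamma^*_A$. Your treatment of (e) and of the inner/outer equilibrium-measure bridge is in fact slightly more explicit than the paper's one-line dismissal, but it is the same argument.
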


\begin{proof}After applying (\ref{proj-outer}) to either of $A$ and $Q$, we deduce (d)--(f) from Propositions~\ref{cor-mon}--\ref{fixx}. If Frostman's maximum principle holds, then combining (\ref{proj-outer}) with Proposition~\ref{cor-mass} leads to (\ref{eq-mass20}). Assume moreover that $c^*(A)<\infty$. The set $A$ being capacitable (Theorem~\ref{l-top}), we conclude from \cite{F1} (Theorems~4.1, 4.3 and Lemma~4.3.4) that the outer and inner equilibrium measures for $A$ (exist and) coincide:
\[\gamma^*_A=\gamma_A.\] Substituting this equality
and (\ref{proj-outer}) into (\ref{eq-mass1}) results in (\ref{eq-mass2}).\end{proof}

\begin{remark}Both (d) and (e) still hold if $Q$ quasicontains $A$, cf.\ Remark~\ref{Q}.\end{remark}

\section{Comments}
\begin{itemize}\item[1.]
The concepts of inner and outer balayage of $\mu\in\mathcal E^+$ to $A\subset X$, introduced by Definitions~\ref{def-bal} and \ref{def-bal-ou}, respectively, and further clarified by a number of subsequent assertions, are in agreement with Cartan's concepts of inner and outer Newtonian balayage on $\mathbb R^n$, $n\geqslant3$ (cf.\ \cite[Section~19, Theorems~1, 1$'$]{Ca2}).
\item[2.] If the space $X$ has a countable base of open sets, then another approach to outer balayage of $\mu\in\mathcal E^+$ to arbitrary $A\subset X$ was suggested by Fuglede \cite[Theorem~4.15]{Fu5}. The outer balayage to a set $A$ was defined there as that to its quasiclosure (for the concept of quasiclosure, see \cite[Section~2.8]{F71}).
     Our approach to outer balayage is relevant to a wider class of l.c.\ spaces $X$,\footnote{Indeed, according to \cite[Section~IX.2, Corollary to Proposition~16]{B3}, a l.c.\ space $X$ has a countable base of open sets if and only if it is metrizable and of class $K_\sigma$. Being therefore metrizable, a sec\-ond-count\-able, l.c.\ space $X$ must be perfectly normal \cite[Section~IX.1, Proposition~2]{B3}, whereas the converse is false in general \cite[Section~IX.2, Exercise~13(b)]{B3}.} though being limited only to Borel sets $A$. But when these two approaches can be applied simultaneously, they turned out to be equivalent. (A concept of inner balayage, basic to the
present study, was not considered in \cite{Fu5}.)\end{itemize}

\section{Acknowledgements} The author thanks Professor Edward B.~Saff for a fruitful discussion on relevant questions in the theory of logarithmic potentials on the plane.

\end{document}